\documentclass{amsart}
\usepackage{latexsym,amscd,amssymb,url}
\usepackage{extarrows}
\usepackage{verbatim}
\usepackage{mathtools}
\pagestyle{headings}
\usepackage{bbm}
\usepackage{dsfont}
\usepackage{enumitem}
\usepackage{tikz-cd}

\textwidth=450pt 
\oddsidemargin=12pt
\evensidemargin=12pt

\setlength{\footskip}{25pt}
\usepackage[all,cmtip]{xy}  
\usepackage{graphicx}
\usepackage{xcolor}
\usepackage{enumitem} 
\definecolor{dblue}{rgb}{0,0,.6}
\usepackage[colorlinks=true, linkcolor=dblue, citecolor=dblue, filecolor = dblue, menucolor = dblue, urlcolor = dblue]{hyperref}

\interfootnotelinepenalty=10000

\numberwithin{equation}{section}

%
\newtheorem{theorem}{Theorem}[section]

\theoremstyle{plain}

\newtheorem{claim}{Claim}

\newtheorem{corollary}[theorem]{Corollary}

\newtheorem{lemma}[theorem]{Lemma}

\newtheorem{proposition}[theorem]{Proposition}
\newtheorem{remark}[theorem]{Remark}

\setcounter{tocdepth}{1}


\newcommand{\Z}{\mathbb Z}
\newcommand{\Q}{\mathbb Q}
\newcommand{\Qbar}{\overline{\mathbb Q}}
\newcommand{\C}{\mathbb C}

\newcommand{\R}{\operatorname{R}}

\newcommand{\CP}{\mathbb P}

\newcommand{\A}{A}

\newcommand{\im}{\operatorname{im}}

\newcommand{\Hom}{\operatorname{Hom}}
\newcommand{\Pic}{\operatorname{Pic}}

\newcommand{\Fix}{\operatorname{Fix}}

\newcommand{\id}{\operatorname{id}}

\newcommand{\Spec}{\operatorname{Spec}}

\newcommand{\Gal}{\operatorname{Gal}}
\newcommand{\res}{\operatorname{res}}
\newcommand{\pr}{\operatorname{pr}}
\newcommand{\NS}{\operatorname{NS}}
\newcommand{\Sing}{\operatorname{Sing}}
\newcommand{\Alb}{\operatorname{Alb}}

\newcommand{\codim}{\operatorname{codim}}

\newcommand{\CH}{\operatorname{CH}}
\newcommand{\supp}{\operatorname{supp}}
\newcommand{\sing}{\operatorname{sing}}

\newcommand{\cl}{\operatorname{cl}} 
 
\newcommand{\Frac}{\operatorname{Frac}}
\newcommand{\Char}{\operatorname{char}}

 \newcommand{\Exc}{\operatorname{Exc}}

  \newcommand{\coker}{\operatorname{coker}}

\newcommand{\tors}{\operatorname{tors}}

\newcommand{\alg}{\operatorname{alg}}

\newcommand{\et}{\text{\'et}}
\newcommand{\zar}{\text{Zar}}

\newcommand{\Ab}{\operatorname{Ab}}

\newcommand{\cd}{\operatorname{cd}}

\newcommand{\cont}{\operatorname{cont}}

\newcommand{\dashedlongrightarrow}{\xymatrix@1@=15pt{\ar@{-->}[r]&}}
\renewcommand{\longrightarrow}{\xymatrix@1@=15pt{\ar[r]&}}
\renewcommand{\mapsto}{\xymatrix@1@=15pt{\ar@{|->}[r]&}}
\renewcommand{\twoheadrightarrow}{\xymatrix@1@=15pt{\ar@{->>}[r]&}}
\newcommand{\hooklongrightarrow}{\xymatrix@1@=15pt{\ar@{^(->}[r]&}}
\newcommand{\congpf}{\xymatrix@1@=15pt{\ar[r]^-\sim&}}
\renewcommand{\cong}{\simeq}


\begin{document}

\title[Torsion higher Chow cycles modulo $\ell$]{Torsion higher Chow cycles modulo $\ell$}

\author{Theodosis Alexandrou} 
\address{Institut für Mathematik, Humboldt-Universität zu Berlin, Rudower Chaussee 25, 10099 Berlin, Deutschland.}
\email{theodosis.alexandrou@hu-berlin.de} 

\author{Lin Zhou} 
\address{Institut für Algebraische Geometrie, Leibniz Universität Hannover, Welfengarten 1, 30167 Hannover, Deutschland.}
\email{zhou@math.uni-hannover.de}

\date{\today} 
\subjclass[2020]{primary 14C15, 14C25, secondary 14F20, 18F25}

\keywords{algebraic cycles, morphic cohomology, motivic cohomology, unramified cohomology}

 \begin{abstract} We study the injectivity property of certain actions of higher Chow groups on refined unramified cohomology. As an application for every $p\geq1$ and for each $d\geq p+4$ and $n\geq2,$ we establish the first examples of smooth complex projective $d$-folds $X$ such that for all $p+3\leq c\leq d-1,$ the higher Chow group $\CH^{c}(X,p)$ contains infinitely many torsion cycles of order $n$ that remain linearly independent modulo $n$. Our bounds for $c$ and $d$ are also optimal. A crucial tool for the proof is morphic cohomology.
 \end{abstract}

\maketitle 

\section{Introduction}
Let $X$ be a smooth complex projective variety. It is known that the Chow group of codimension-$i$ algebraic cycles on $X$ modulo a prime number $\ell$ might well be infinite if $2\leq i\leq \dim X-1$. Schoen \cite{schoen-modn} showed that this phenomenon occurs for the triple product of the Fermat elliptic curve and all prime numbers $\ell\equiv1\mod{3}$. Later, Totaro \cite{totaro-annals} building upon earlier works of Rosenschon--Srinivas \cite{RS} and of Nori \cite{Nori} proved that for a very general principally polarized complex abelian 3-fold $X$, the Chow group $\CH^{2}(X)/\ell$ is infinite for all prime numbers $\ell$. An example of infinite-dimensionality of $\CH^{i}(X)/\ell$ for all prime numbers $\ell$ was also constructed by Diaz \cite{Diaz}. All the aforementioned results strongly rely on a fundamental theorem of Bloch and Esnault \cite{Bloch-Esnault} that provides a criterion for a nullhomologous cycle to give a non-zero element in the second Chow group modulo $\ell$. A variant of Bloch and Esnault's theorem was recently given by Farb--Kisin--Wolfson \cite{fkw}, whose proof uses the prismatic theory of Bhatt--Scholze \cite{prismatic}. Scavia \cite{scavia} applied their result to extend the main result of \cite{schoen-modn} from $\ell\equiv 1\mod{3}$ to all prime numbers $\ell>5$, thus giving the first example of a smooth projective variety $X$ over $\Qbar$ such that $\CH^2(X)/\ell$ is infinite for all but at most finitely many prime numbers $\ell$.


\par With regards to a problem of Schoen \cite{schoen-torsion}, Schreieder \cite{Sch-griffiths} constructed examples of smooth complex projective $5$-folds with infinite $2$-torsion in their third Griffiths groups, proving so that the torsion subgroup of Griffiths groups is in general not finite. The result was recently extended to odd primes by the first named author \cite{Ale-griffiths}. Since the infinitely many $\ell$-torsion classes considered in \cite{Sch-griffiths,Ale-griffiths} are linearly independent modulo $\ell$, one finds that the group $\CH^{3}(X)[\ell]/\ell\CH^{3}(X)[\ell^{2}]$ is also infinite.
\par In the present article, parallel to the above results, we study the structure of Bloch's higher Chow group $\CH^{c}(X,p)$ \cite{bloch-motivic}. For $p=1$, the $K$-theoretic description $\CH^{c}(X,1)\cong H^{c-1}(X_{\zar},\mathcal{K}_{c})$ \cite{quillen} allows one to interpret higher cycles in the form $\sum Y_{i}\otimes f_{i},$ where $Y_{i}\subset X$ are irreducible subvarieties of codimension-$(c-1)$ and where $f_{i}\in \C(Y_{i})^{\ast}$ are rational functions that respect the rule $\sum \text{div}(f_{i})=0$ as a cycle on $X$. It is well-known that the vector space $\CH^{c}(X,1)_{\Q}$ is in general not countable even modulo decomposable cycles, i.e., modulo the subspace $$\CH^{c}(X,1)_{\text{dec},\Q}:=\im( \CH^{1}(X,1)\otimes\CH^{c-1}(X)\longrightarrow\CH^{c}(X,1))\otimes\Q,$$ where one recalls that $\CH^{1}(X,1)=\C^{\ast}$. In particular, producing examples with uncountably many indecomposable higher Chow cycles has been the subject of various works; see \cite{collino,co-fa,gordon-lewis,gordon-lewis-proceedings,lewis}.\par
Interestingly enough, it appears that cases in which the countable groups $\CH^{c}(X,p)[\ell]$ and $\CH^{c}(X,p)/\ell$ are infinite are lacking in the literature. This work is thus dedicated to show the following.

\begin{theorem}\label{thm:main} For each $p\geq1$ and $n\geq2$, there is a smooth complex projective variety $X$ of dimension $p+4$ such that the group $\CH^{p+3}(X,p)[n]/n\CH^{p+3}(X,p)[n^2]$ contains infinitely many cycles of order $n$.
\end{theorem}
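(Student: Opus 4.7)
The plan is to extend the Schreieder--Alexandrou construction of infinite $\ell$-torsion in $\CH^{3}$ of a $5$-fold modulo $\ell$ to the higher Chow setting by exploiting the action of $\CH^{\bullet}(X,\bullet)$ on refined unramified cohomology, with morphic cohomology serving as the bridge to topology. First, by a primary decomposition of torsion, I would reduce to the case $n=\ell^{r}$ for a prime $\ell$, so that the task becomes producing infinitely many classes that are linearly independent in $\CH^{p+3}(X,p)[\ell^{r}]/\ell^{r}\CH^{p+3}(X,p)[\ell^{2r}]$.

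The geometric input is a Schreieder--Alexandrou type $5$-fold $Y$ with an infinite family $\{\alpha_{i}\}_{i\in I}\subset \CH^{3}(Y)[n]$ whose classes remain linearly independent modulo $n$, as already noted in the introduction. For $p=1$ one can take $X=Y$; for $p\geq 2$ I would take $X=Y\times W_{p}$ with $W_{p}$ a carefully chosen smooth projective variety of dimension $p-1$ carrying a distinguished higher Chow class $\beta_{p}\in\CH^{p}(W_{p},p)$ (a Milnor-symbol type class from a generic tuple of rational functions being the most natural candidate). The cycles of interest in $\CH^{p+3}(X,p)$ are then the external products $\tilde\alpha_{i}=\alpha_{i}\boxtimes\beta_{p}$, which are automatically annihilated by $n$ because $n\alpha_{i}=0$.

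The heart of the argument is to verify that the $\tilde\alpha_{i}$ remain linearly independent in $\CH^{p+3}(X,p)[n]/n\CH^{p+3}(X,p)[n^{2}]$. Here I would invoke the paper's announced injectivity result for the action of higher Chow groups on refined unramified cohomology, and pair each $\tilde\alpha_{i}$ with a refined unramified class manufactured from the Schreieder--Alexandrou detection of $\alpha_{i}$ on $Y$. To encode the non-divisibility by $n$, one passes through the motivic Bockstein sequence associated to $0\to\Z/n\to\Z/n^{2}\to\Z/n\to 0$, which exhibits $\CH^{p+3}(X,p)[n]/n\CH^{p+3}(X,p)[n^{2}]$ as a subquotient of $H^{p+6,p+3}(X,\Z/n)$. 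Morphic cohomology enters at this stage as the indispensable ingredient: the bidegree $(p+6,p+3)$ lies outside the Beilinson--Lichtenbaum range (motivic degree exceeds weight by $3$), so étale methods do not suffice; morphic cohomology provides a topologically computable target that receives a cycle class map from higher Chow groups with finite coefficients, and through the Friedlander--Walker type comparison one can reduce the separation of the $\tilde\alpha_{i}$ to the already-established independence of the $\alpha_{i}$ on $Y$.

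The principal obstacle is this last step: establishing the injectivity of the action on refined unramified cohomology in a bidegree uncovered by Beilinson--Lichtenbaum, and checking that the induced pairings detect the entire Schreieder--Alexandrou family faithfully rather than collapsing it to a finite quotient. The optimality of the bounds $d\geq p+4$ and $p+3\leq c\leq d-1$ asserted in the abstract should drop out of the same framework through a vanishing argument: below these thresholds, the relevant refined unramified cohomology groups are either trivial or too small to host infinitely many independent classes.
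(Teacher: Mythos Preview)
Your proposal has two genuine gaps. First, the case $p=1$ is simply wrong as stated: a Schreieder--Alexandrou $5$-fold $Y$ has infinitely many independent $n$-torsion classes in $\CH^{3}(Y)=\CH^{3}(Y,0)$, which says nothing about $\CH^{4}(Y,1)$. For $p=1$ the theorem concerns $\CH^{p+3}(X,p)=\CH^{4}(X,1)$, so taking $X=Y$ does not give the conclusion. Second, and more structurally, for $p\geq 2$ your mechanism requires that exterior product with a Milnor-type class $\beta_{p}\in\CH^{p}(W_{p},p)$ induce an injection from $\CH^{3}(Y)[n]/n\CH^{3}(Y)[n^{2}]$ into $\CH^{p+3}(Y\times W_{p},p)[n]/n\CH^{p+3}(Y\times W_{p},p)[n^{2}]$. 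This is \emph{not} the injectivity result the paper proves, and you give no argument for it. The paper's injectivity theorems concern the action of $\CH^{d}(X,1)/n$ with $d=\dim X$; their proofs hinge on Poincar\'e duality in top degree together with a specially constructed degeneration of $X$ satisfying properties (P1)--(P2), and neither ingredient is available for a class in $\CH^{p}(W_{p},p)$.

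The paper's route is essentially the reverse of yours. It does not start from a $5$-fold with infinite torsion and shift bidegree; instead it first builds, by induction on $p$ (base case Totaro's Jacobian $JC$, inductive step a very general elliptic curve via an injectivity theorem for $\CH^{1}(E,1;\Z/n)$), a $(p+2)$-fold $Y$ with $L^{p+1}H^{p+3}(Y)/n$ containing infinitely many order-$n$ elements. It then crosses with a surface $S$ (a $\Z/n$-quotient of a complete intersection in $\mathbb{P}^{5}$) satisfying $L^{2}H^{3}(S)/n=L^{2}H^{3}(S)[n]/nL^{2}H^{3}(S)[n^{2}]\cong\Z/n$, so the exterior product automatically lands in the $[n]/n[n^{2}]$ piece of $L^{p+3}H^{p+6}(S\times Y)$; injectivity of this product is established via a degeneration of $S$ into a union of projective planes over which the universal $\Z/n$-cover trivialises. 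Morphic cohomology enters not as a vague topological target but through the Suslin--Voevodsky isomorphism $\CH^{c}(X,p)[n]/n\CH^{c}(X,p)[n^{2}]\cong L^{c}H^{2c-p}(X)[n]/nL^{c}H^{2c-p}(X)[n^{2}]$, which lets the entire induction run in morphic cohomology where the $p=0$ base case is available.
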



\par Considering products $X\times_{\C}\CP^{d-(p+4)}_{\C}$ for any $d\geq p+4$, the projective bundle formula also gives examples in higher dimensions.

\begin{corollary}\label{cor:main} Let $p\geq1$. For each $d\geq p+4$ and $n\geq2,$ there is a smooth complex projective $d$-fold $X$ such that the group $\CH^{c}(X,p)[n]/n\CH^{c}(X,p)[n^2]$ contains infinitely many cycles of order $n$ for all $p+3\leq c\leq d-1$.\end{corollary}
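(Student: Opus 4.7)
The plan is to reduce the corollary to Theorem \ref{thm:main} via the projective bundle formula for Bloch's higher Chow groups. Let $X_{0}$ be the smooth complex projective $(p+4)$-fold produced by Theorem \ref{thm:main}, so that the group
$$
\CH^{p+3}(X_{0},p)[n]/n\CH^{p+3}(X_{0},p)[n^{2}]
$$
contains infinitely many cycles of order $n$. Set $k:=d-(p+4)\geq 0$ and take $X:=X_{0}\times_{\C}\CP^{k}_{\C}$, which is a smooth complex projective $d$-fold.

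The key input is Bloch's projective bundle formula, which yields a canonical decomposition
$$
\CH^{c}(X,p)\;\cong\;\bigoplus_{i=0}^{k}\CH^{c-i}(X_{0},p),
$$
given by sending $\sum_{i}\alpha_{i}\otimes H^{i}\in\bigoplus_{i}\CH^{c-i}(X_{0},p)$ (where $H$ denotes the hyperplane class on $\CP^{k}_{\C}$) to its pullback-cup-product image in $\CH^{c}(X,p)$. Since the functors $G\mapsto G[n]$ and $G\mapsto G/n G[n^{2}]$ commute with finite direct sums of abelian groups, passing to the torsion quotient gives
$$
\CH^{c}(X,p)[n]/n\CH^{c}(X,p)[n^{2}]\;\cong\;\bigoplus_{i=0}^{k}\CH^{c-i}(X_{0},p)[n]/n\CH^{c-i}(X_{0},p)[n^{2}].
$$

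For $c$ in the range $p+3\leq c\leq d-1$, the index $i_{0}:=c-(p+3)$ satisfies $0\leq i_{0}\leq d-1-(p+3)=k$, so the summand $\CH^{p+3}(X_{0},p)[n]/n\CH^{p+3}(X_{0},p)[n^{2}]$ appears in the above decomposition as a direct summand. Theorem \ref{thm:main} ensures it contains infinitely many elements of order $n$; under the inclusion as a summand these produce infinitely many classes of order $n$ in $\CH^{c}(X,p)[n]/n\CH^{c}(X,p)[n^{2}]$, completing the proof.

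The argument is essentially bookkeeping and the only nontrivial input is the projective bundle formula, which is standard; there is no real obstacle beyond verifying that the range $p+3\leq c\leq d-1$ is precisely the range for which the summand of interest survives inside $X_{0}\times\CP^{k}_{\C}$.
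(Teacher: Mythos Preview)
Your proof is correct and follows essentially the same approach as the paper: both take the $(p+4)$-fold from Theorem \ref{thm:main}, form the product with $\CP^{d-(p+4)}_{\C}$, and invoke the projective bundle formula for higher Chow groups (\cite[Theorem 7.1]{bloch-motivic}) to locate $\CH^{p+3}(X_{0},p)$ as a direct summand of $\CH^{c}(X,p)$ for the stated range of $c$. You have simply written out the bookkeeping in more detail than the paper does.
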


\par We note that the bounds given in the corollary above are optimal. In fact, if either $c<p+2$ or $c\geq\dim X$, then the group $\CH^{c}(X,p)/n$ is always finite; see Lemma \ref{lem:bound_mod_n} \eqref{it:bound_mod_n}. 
In addition, for each $n\geq2$, the $n$-torsion subgroup $\CH^{p+2}(X,p)[n]$ is finite, since $\CH^{p+2}(X,p)_{\tors}\cong N^{1}H^{p+3}(X_{\et},\Q/\Z(p))$; see Theorem \ref{thm:Licthenbaum-Beilinson-conjecture} \eqref{it:L-B'} (or see Lemma \ref{lem:bound_mod_n} \eqref{it:bound_torsion}). The latter for $p=0$ is a theorem of Merkurjev--Suslin \cite{merkurjev-suslin} and for $p\geq1$ is a consequence of the Beilinson--Lichtenbaum conjecture proven by Voevodsky \cite{Voe-milnor,Voevodsky}. 

\par A crucial input for the proof of Theorem \ref{thm:main} is the so-called morphic cohomology $L^{c}H^{p}(X)$ of $X$. That is, the Poincar\'e dual of the Lawson homology $L_{\dim(X)-c}H_{2\dim(X)-p}(X)$; see \cite{lawson-a,lawson-b}. Lawson laid the foundations for a theory based on the homotopy groups of Chow varieties whose properties have been studied and further developed in various works; see \cite{Fri91, FL92, FM94, suslin-voevodsky, Fri00}. Unlike the case of Bloch's higher Chow groups that extend the definition of the Chow group of rational equivalence classes of algebraic cycles on a variety $X$, the precursor of the Lawson homology theory replaces the role of rational equivalence by algebraic equivalence. Namely, if $p=2c$, then $L^{c}H^{p}(X)$ is identified with the Chow group of codimension-$c$ algebraic cycles on $X$ modulo algebraic equivalence, i.e., $L^{c}H^{2c}(X)=\CH^{c}(X)/\sim_{\alg}$. More generally, for an abelian group $A$, the morphic cohomology $L^{c}H^{p}(X,A)$ and Bloch's higher Chow group $\CH^{c}(X,2c-p\ ;A)$ with coefficients in $A$, are related via a comparison map
$$\CH^{c}(X,2c-p\ ;A)\longrightarrow L^{c}H^{p}(X,A),$$
 which becomes an isomorphism if one takes $A=\Z/m$; see Theorem \ref{thm:suslin-voevodsky} \eqref{it:S-V}. In particular, one finds for all integers $c,p\geq0$ and $n\geq2,$ a natural isomorphism 
 \begin{equation}\label{eq:comparison-L-M}
    \CH^c(X,p)[n]/n\CH^c(X,p)[n^2]\cong L^cH^{2c-p}(X)[n]/nL^cH^{2c-p}(X)[n^2];
\end{equation}
see Theorem \ref{thm:suslin-voevodsky} \eqref{it:S-V'}. This allows us to reduce Theorem \ref{thm:main} to a statement about morphic cohomology; see Theorem \ref{thm:main-Lawson}.

\par Our approach towards the proof of Theorem \ref{thm:main} is similar to \cite{Sch-griffiths, Ale-griffiths}. That is, we establish various injectivity theorems about exterior product maps of higher Chow groups that generalize the results \cite[Theorem 6.1]{Sch-griffiths} and \cite[Theorem 0.2]{schoen-products}; see $\S$\ref{sec:inj}. As in \cite{Sch-griffiths}, these results rely on the fact that several groups of algebraic cycles admit a description in terms of refined unramified cohomology \cite{Sch-refined}. Most notably, the kernel of the cycle class map $\bar{\cl}^{p,c}_{X}:\CH^{c}(X,\ 2c-p\ ; \Z/n\Z)\to H^{p}(X_{\et},\Z/n(c))$, which became accessible by recent works of Kok and the second named author, as well as the first named author and Schreieder; see \cite{kok-zhou, alexandrou-schreieder}. The injectivity theorem that is relevant to Theorem \ref{thm:main} has the following consequence. 

\begin{theorem}\label{thm:injectivity-result-intro} Let $n\geq 2$ be an integer. Let $Y$ be a smooth complex projective variety. Then there is a smooth complex projective surface $S$ with $\Pic(S)_{\tors}\cong\Z/n\Z$ such that the exterior product map 
\begin{align}\label{eq:exterior-product}
    L^{2}H^{3}(S)/n\otimes L^{c}H^{2c-p}(Y)\longrightarrow L^{c+2}H^{2c-p+3}(S\times Y)/n,\  [\Gamma]\otimes[z]\mapsto [\Gamma\times z]
\end{align}
is injective for all integers $c,p\geq0$. 
\end{theorem}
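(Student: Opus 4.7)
The plan is to reduce to a statement about higher Chow groups with $\Z/n$-coefficients and then establish injectivity via refined unramified cohomology, following the strategy of \cite{Sch-griffiths,Ale-griffiths}. By the Suslin--Voevodsky comparison recalled in Theorem \ref{thm:suslin-voevodsky}, there are canonical isomorphisms $L^cH^q(X;\Z/n)\cong\CH^c(X,2c-q;\Z/n)$ compatible with exterior products, and the natural maps $L^cH^q(X)/n\to L^cH^q(X;\Z/n)$ are injective by the universal coefficient sequence. Since $L^2H^3(S)/n$ is $n$-torsion, the map \eqref{eq:exterior-product} factors through $L^2H^3(S)/n\otimes L^cH^{2c-p}(Y)/n$ and embeds into the exterior product
$$\CH^2(S,1;\Z/n)\otimes\CH^c(Y,p;\Z/n)\longrightarrow\CH^{c+2}(S\times Y,p+1;\Z/n).$$
It therefore suffices to prove injectivity of the restriction of this map to the subgroups coming from $L^2H^3(S)/n$ and $L^cH^{2c-p}(Y)/n$.

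For the surface, I take $S$ to be a smooth complex projective surface with $\Pic(S)_{\tors}\cong\Z/n$: for $n=2$ an Enriques surface works, while for general $n\geq 2$ explicit examples --- such as suitable free quotients of products of curves or of abelian varieties --- are available in \cite{Ale-griffiths}. Any such $S$ must have $\Pic^0(S)=0$ (as $\Pic(S)_{\tors}$ is finite), so that $H^1(S,\Z)=0$ and, by Hodge symmetry and the universal coefficient theorem, $H_1(S,\Z)\cong\NS(S)_{\tors}\cong\Z/n$. Poincar\'e duality then gives $L^2H^3(S)=H^3(S,\Z)\cong\Z/n$. The generator produces a canonical class $[\Gamma]\in L^2H^3(S)/n\cong\Z/n$ whose image in $\CH^2(S,1;\Z/n)$ has nonzero \'etale cycle class in $H^3(S_{\et},\Z/n(2))$, directly tied to the generator of $\Pic(S)_{\tors}$.

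For the injectivity itself I invoke the general exterior-product injectivity theorems to be proved in $\S$\ref{sec:inj}. These build on the descriptions of \cite{kok-zhou,alexandrou-schreieder} identifying the kernel of the cycle class map $\bar{\cl}^{p,c}_X\colon\CH^c(X,2c-p;\Z/n)\to H^p(X_{\et},\Z/n(c))$ with refined unramified cohomology in the sense of \cite{Sch-refined}. Elements in the kernel of the product map can then be analyzed via refined coniveau filtrations on $S\times Y$, and the specific structure of $[\Gamma]$ --- in particular that $S$ admits a suitable decomposition of the diagonal modulo $n$ anchored by the torsion class in $\Pic(S)$ --- allows one to peel off the $S$-factor and reduce nonvanishing on $S\times Y$ to nonvanishing on $Y$. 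The principal difficulty lies precisely here: refined unramified cohomology does not satisfy a naive K\"unneth formula, so cohomology classes cannot be split freely between $S$ and $Y$; overcoming this requires a careful refined coniveau analysis on $S\times Y$, in the spirit of \cite[Theorem 6.1]{Sch-griffiths} and \cite[Theorem 0.2]{schoen-products}.
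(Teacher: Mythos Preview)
Your reduction to the exterior product on $\CH^c(-,p;\Z/n)$ via the Suslin--Voevodsky comparison is correct and matches the paper. The gap is in the injectivity step.

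The paper does not use a decomposition of the diagonal for $S$, and it does not work for an arbitrary surface with $\Pic(S)_{\tors}\cong\Z/n$. The obstruction to ``peeling off the $S$-factor'' is concrete: after taking a Poincar\'e dual of $\cl(\Gamma)$ and pushing forward to $Y$, one picks up correction terms supported in higher codimension (the $\iota_*\xi$ in the proof of Theorem~\ref{thm:injectivity-CH^d(X,1)}), and there is no reason for these to vanish on a fixed surface over $\C$. The paper kills them by a \emph{specialization} argument: $S$ is chosen as the geometric generic fibre of a flat family $\mathcal S\to\Spec\mathcal O_{\mathbb A^1_k,0}$ (Theorem~\ref{thm:degeneration}) in which the $\Z/n$-cover $T\to S$ extends across the family and becomes trivial over each component of the central fibre. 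Under the specialization map for refined unramified cohomology (Proposition~\ref{prop:specialization}), the obstruction terms then vanish because the relevant class in $H^1(-,\Z/n)$ restricts to zero on every component of the special fibre (Claim~\ref{claim:sp-obstruction-zero}). Without this degeneration, your argument has no mechanism to eliminate these terms.

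Two further remarks. First, the surfaces are not ``free quotients of products of curves or of abelian varieties'' but $\Z/n$-quotients of complete intersections of multidegree $(n,n,n)$ in $\mathbb P^5$; the paper even notes that the specific degenerations used in \cite[Theorem~4.1]{Ale-griffiths} fail here because the covers ramify in the limit. Second, $Y$ must be spread out over a countable subfield $k\subset\C$ so that the base change $k\hookrightarrow\C$ factors through the function field of the degeneration; the paper handles this with Jannsen's rigidity theorem. Your proposal omits this spreading-out step as well.
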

The condition $\Pic(S)_{\tors}\cong\Z/n\Z$ in the above theorem implies
\begin{align*}
   \frac{L^{2}H^{3}(S)[n]}{nL^{2}H^{3}(S)[n^{2}]} =L^{2}H^{3}(S)/n\cong\Z/n\Z;
\end{align*}
see Lemma \ref{lem:properties} \eqref{it:Pr-b}.

Thus, Theorem \ref{thm:main} in turn follows from Theorem \ref{thm:injectivity-result-intro} if one uses first the following result and then applies the isomorphism \eqref{eq:comparison-L-M}.

\begin{theorem}\label{thm:general-totaro-result} For each $p\geq0$, there is a smooth complex projective variety $X$ of dimension $p+3$ such that the group $L^{p+2}H^{p+4}(X)/n$ contains infinitely many elements of order $n$ for all integers $n\geq2$.\end{theorem}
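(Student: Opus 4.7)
The proof proceeds by induction on $p$. For the base case $p=0$, we need a smooth projective complex $3$-fold $X_0$ such that $L^2H^4(X_0)/n \cong (\CH^2(X_0)/{\sim_\alg})/n$ contains infinitely many elements of order $n$ for every $n\geq 2$. Such an $X_0$ is provided by Diaz's construction \cite{Diaz} (building on \cite{Bloch-Esnault, Nori, RS, totaro-annals}), which yields a single $X_0$ with infinitely many independent classes in $\Griff^2(X_0)$ surviving modulo every prime $\ell$; a standard lifting argument extends this to classes of order $n$ for every $n\geq 2$.

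\textbf{Inductive step.} Given $X_p$ of dimension $p+3$ as in the theorem, fix an elliptic curve $E$ over $\C$ and put $X_{p+1}:=X_p\times E$, of dimension $p+4$. The Friedlander--Lawson exterior product in morphic cohomology yields a map $L^{p+2}H^{p+4}(X_p)\otimes L^{1}H^{1}(E) \longrightarrow L^{p+3}H^{p+5}(X_{p+1})$. By Friedlander--Lawson duality and Dold--Thom, $L^{1}H^{1}(E)=\pi_{1}(Z_{0}(E))=H_{1}(E,\Z)\cong\Z^{2}$; picking a primitive $\tau\in L^1H^1(E)$, the rule $z\mapsto z\times\tau$ descends modulo $n$ to a map
\[
\Phi_\tau:\ L^{p+2}H^{p+4}(X_p)/n\longrightarrow L^{p+3}H^{p+5}(X_{p+1})/n.
\]
If $\Phi_\tau$ is injective, the inductive hypothesis supplies infinitely many order-$n$ elements in the source, hence in the target, and the induction closes.

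\textbf{Injectivity of $\Phi_\tau$---the main obstacle.} Proving $\Phi_\tau$ injective modulo $n$ is the technical heart. Via the Suslin--Voevodsky isomorphism (Theorem \ref{thm:suslin-voevodsky}\,\eqref{it:S-V}), $L^{c}H^{q}(-,\Z/n)$ is identified with $\CH^{c}(-,2c-q;\Z/n)$, hence in the Beilinson--Lichtenbaum range with the corresponding \'etale cohomology, for which the classical Kunneth formula is available. Under it the analogue of $\Phi_\tau$ on $\Z/n$-coefficient morphic cohomology becomes multiplication by the primitive class $[\tau]\in L^1H^1(E,\Z/n)=E[n]\cong(\Z/n)^{2}$, and is therefore injective. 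Transferring this to $\Phi_\tau$ itself amounts to comparing the universal coefficient sequences $0\to L^cH^q(X)/n\to L^cH^q(X,\Z/n)\to L^cH^{q+1}(X)[n]\to 0$ on source and target: the composite $L^{p+2}H^{p+4}(X_p)/n\hookrightarrow L^{p+2}H^{p+4}(X_p,\Z/n)\to L^{p+3}H^{p+5}(X_{p+1},\Z/n)$ is injective and its image lies in the subgroup $L^{p+3}H^{p+5}(X_{p+1})/n$, giving injectivity of $\Phi_\tau$. Carrying out this universal coefficient/Kunneth bookkeeping cleanly is the delicate point; alternatively, and more in the spirit of the paper, one can establish a dedicated injectivity theorem for exterior products with an elliptic curve, patterned on Theorem \ref{thm:injectivity-result-intro}, by adapting the refined unramified cohomology techniques developed in $\S\ref{sec:inj}$ from surfaces to curves of positive genus.
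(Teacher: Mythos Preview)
Your inductive strategy and base case match the paper's, but the injectivity argument for $\Phi_\tau$ has a genuine gap. You claim that via Suslin--Voevodsky and Beilinson--Lichtenbaum one can invoke a K\"unneth formula to see that exterior product with $[\tau]$ on $\Z/n$-coefficient morphic cohomology is injective. However, the source group $L^{p+2}H^{p+4}(X_p,\Z/n)\cong \CH^{p+2}(X_p,p;\Z/n)$ has $c=p+2$, which lies \emph{outside} the Beilinson--Lichtenbaum range $c\le p+1$, so the cycle class map to \'etale cohomology is not an isomorphism and no K\"unneth decomposition is available. Indeed, the \'etale cohomology groups $H^{p+4}(X_p,\Z/n(p+2))$ are finite, while by hypothesis $L^{p+2}H^{p+4}(X_p)/n$ is infinite; the entire phenomenon you are trying to propagate lives in the kernel of the cycle class map, precisely where K\"unneth says nothing.

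This is exactly why the paper needs the machinery of \S\ref{sec:inj}. The paper proves the required injectivity (Theorem~\ref{thm:ext-prod-elliptic} and Corollary~\ref{cor:ext-prod-elliptic}) by first identifying the kernel of the cycle class map with a quotient of refined unramified cohomology (Corollary~\ref{cor:pairings-comparison}), and then showing that exterior product with a generator of $\CH^1(E_K,1;\Z/n)$ is injective on this quotient via a specialization argument (Theorem~\ref{thm:injectivity-CH^d(X,1;Z/n)}). The degeneration used is a semistable model of $E$ with an $I_{nm}$ fibre chosen so that the relevant $\Z/n$-cover trivializes on each component of the special fibre; this forces the obstruction term to vanish after specialization. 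A consequence of this method is that the elliptic curve cannot be fixed in advance: its $j$-invariant must be transcendental over a field of definition of $X_p$, so your ``fix an elliptic curve $E$'' at the start of the inductive step would also need adjustment. Your closing sentence gestures at the right approach, but the content---constructing the degeneration and running the refined-unramified specialization argument---is the substance of the proof, not a routine adaptation.
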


The special cases $p\in\{0,1\}$ that appear in the above theorem have already been explored; see \cite{totaro-annals, Ros08}. Moreover, Theorem \ref{thm:general-totaro-result} also relies on an injectivity result similar to Theorem \ref{thm:injectivity-result-intro}, where we replace the surface $S$, with a very general elliptic curve $E$; see Corollary \ref{cor:ext-prod-elliptic}. Taking products with projective spaces, we obtain a generalization of Theorem \ref{thm:general-totaro-result} in higher dimensions. 

\begin{corollary}\label{cor:general-totaro-result} For each $p\geq0$ and $d\geq p+3,$ there is a smooth complex projective $d$-fold $X$ such that the group $L^{c}H^{2c-p}(X)/n$ contains infinitely many elements of order $n$ for all $n\geq2$ and all $p+2\leq c\leq d-1$.
\end{corollary}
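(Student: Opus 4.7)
The plan is to bootstrap from Theorem \ref{thm:general-totaro-result} by taking a product with a projective space, in direct analogy with the passage from Theorem \ref{thm:main} to Corollary \ref{cor:main}. I would fix $p\geq 0$ and $d\geq p+3$, let $X_{0}$ be the smooth complex projective $(p+3)$-fold produced by Theorem \ref{thm:general-totaro-result}, so that $L^{p+2}H^{p+4}(X_{0})/n$ contains infinitely many elements of order $n$ for every $n\geq 2$, and then set
\[
   X:=X_{0}\times_{\C}\CP^{d-p-3}_{\C},
\]
a smooth complex projective $d$-fold.

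The key input is the projective bundle formula for morphic cohomology, which yields a natural direct-sum decomposition
\begin{align*}
   L^{c}H^{q}\bigl(X_{0}\times_{\C}\CP^{r}_{\C}\bigr)\ \cong\ \bigoplus_{i=0}^{r}L^{c-i}H^{q-2i}(X_{0}).
\end{align*}
Applying this with $r=d-p-3$ and $q=2c-p$ and then reducing modulo $n$, I would single out the summand indexed by $i=c-p-2$: its $(c-i,q-2i)$-bidegree is precisely $(p+2,p+4)$, so after reduction this summand is $L^{p+2}H^{p+4}(X_{0})/n$. The condition $0\leq c-p-2\leq d-p-3$ matches exactly the admissible range $p+2\leq c\leq d-1$ stated in the corollary. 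Consequently, for every such $c$, the group $L^{c}H^{2c-p}(X)/n$ contains $L^{p+2}H^{p+4}(X_{0})/n$ as a direct summand, and Theorem \ref{thm:general-totaro-result} then supplies the required infinite family of classes of order $n$.

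The only real issue to address is the validity of the projective bundle formula for morphic cohomology, together with its compatibility with reduction modulo $n$ (so that the displayed direct summand really injects into $L^{c}H^{2c-p}(X)/n$). This however is standard — it descends from the analogous formula for Lawson homology and is already implicit in the deduction of Corollary \ref{cor:main} from Theorem \ref{thm:main} — so I expect no genuine obstacle beyond assembling the correct reference. No new ideas beyond Theorem \ref{thm:general-totaro-result} and the projective bundle formula enter the argument.
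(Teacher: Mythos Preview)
Your proposal is correct and follows essentially the same route as the paper: the paper too takes the $(p+3)$-fold $Y$ from Theorem \ref{thm:general-totaro-result}, sets $X:=Y\times\CP^{d-(p+3)}_{\C}$, and appeals to the projective bundle formula for morphic cohomology (citing \cite[Proposition 2.5]{FG93}). Your identification of the relevant summand and of the admissible range for $c$ is exactly the content of that reference applied here.
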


The bounds for $d$ and $c$ given in the above corollary are again optimal for all $p\geq0$. In fact, we have a canonical isomorphism $\CH^{c}(X,p\ ;\Z/n)\cong L^{c}H^{2c-p}(X,\Z/n)$ (see Theorem \ref{thm:suslin-voevodsky} \eqref{it:S-V}) and the former group is known to be finite if either $c\leq p+1$ or $c\geq d$; see Lemma \ref{lem:bound_mod_n} \eqref{it:bound_mod_n}. \par

Besides the case $p=0$, it appears that a variant of Theorem \ref{thm:general-totaro-result} (and of Corollary \ref{cor:general-totaro-result}) for Bloch's higher Chow groups is not known if $p\geq1$. That is, for each $p\geq1,$ a smooth complex projective variety $X$ of dimension $p+3$ such that $\CH^{p+2}(X,p)/\ell$ is infinite for all prime numbers $\ell$. The absence of such a result explains the necessity of working with morphic cohomology in the current work. Furthermore, regarding the case $p=0$ in Theorem \ref{thm:main}, we are unaware of a single example of a smooth complex projective $4$-fold $X$ such that the group $\CH^{3}(X)[\ell]/\ell\CH^{3}(X)[\ell^2]$ is infinite for some prime number $\ell$. This would be consistent with the expected optimal lower bounds. Note that the same question for $\CH^{3}(X)[\ell]$ has an affirmative answer by \cite[Corollary 3.4]{schoen-products}.
 
As in \cite{Ale-griffiths} the surfaces considered in Theorem \ref{thm:injectivity-result-intro} arise as $\Z/n$-quotients of smooth complete intersections inside $\mathbb{P}^{5}_{\C}$. However, the main difference between them is that they admit different degenerations. In \cite[Theorem 6.1]{Sch-griffiths}, to show that the obstruction to the injectivity of the exterior product map vanishes, the author needed a particular kind of semi-stable degenerations of algebraic surfaces. Roughly speaking, that there are certain order $n$ Brauer classes in the geometric generic fibre of the family that extend across the whole family and while the central fibre splits up into several components, the restriction of these lifts to each component is zero. In \cite[Theorem 7.1]{Sch-griffiths}, it was observed that the so-called flower--pot degenerations of the Enriques surfaces \cite[Proposition 3.3.1(3)]{persson} enjoy such properties. Later, this geometric construction was further generalized in \cite[$\S$4]{Ale-griffiths}.\par

In our case, the obstruction is given by the torsion subgroup $\NS(S)_{\tors}$ of the N\'eron--Severi group; see Theorem \ref{thm:injectivity-CH^d(X,1)}. Namely, the surfaces of Theorem \ref{thm:injectivity-result-intro} admit degenerations with the following peculiarities: there exists an \'etale cyclic cover of degree $n$ of $S$ whose associated class in $\NS(S)_{\tors}\cong\Z/n\Z$ is a generator and while the covering map extends as an \'etale cover along the whole family, it becomes trivial over each component of the central fibre; see Theorem \ref{thm:degeneration}. Explicit examples of such surfaces include an Enriques surface that degenerates into a rational polyhedron whose dual graph is a triangulation of $\mathbb{R}\CP^2$, i.e., a type III degeneration; see \cite{kulikov, morrison}. In contrast to \cite{Sch-griffiths}, we work with degenerations that are not necessarily semi-stable; see Remark \ref{remark:semi-stable}. Finally, we also note that the construction from \cite[Theorem 4.1]{Ale-griffiths} could not be used here, as the universal covers of the surfaces in loc. cit. ramify when specialize in the central fibre. 

\par
The paper is organized as follows. After covering the preliminaries in $\S$\ref{sec:preliminaries}, we define actions of higher Chow groups on refined unramified cohomology in $\S$\ref{sec:pairings}. In $\S$\ref{sec:inj}, we provide general injectivity results for such actions. In $\S$\ref{sec:construction}, we introduce the surfaces that we use to prove Theorem \ref{thm:injectivity-result-intro}. Lastly, in $\S$\ref{sec:main-results}, we prove our main results, including Theorems \ref{thm:main}, \ref{thm:injectivity-result-intro} and \ref{thm:general-totaro-result}.

\subsection{Notation} The $n$-torsion subgroup of an abelian group $A$ is denoted by $A[n]:=\{a\in A |\ na=0\}$. If $\ell$ is a prime number, then $A[\ell^{\infty}]:=\{a\in A|\ \ell^{r}a=0\ \text{for some }r\geq1\}$ is the subgroup of $\ell$-primary torsion elements of $A$. By slight abuse of notation, the cokernel of a homomorphism of abelian groups $\psi:A\to B$ will be denoted by $B/A:=\coker(\psi)$.\par
Let $k$ be a field. An algebraic $k$-scheme $X$ is a separated scheme of finite type over $k$ and
it is equi-dimensional if its irreducible components have all the same dimension. A variety over $k$ is an
integral algebraic $k$-scheme. If $k\subset K$ is a field extension and $X$ is a scheme over $k$, then we also set $X_{K}:=X\times_{\Spec k}\Spec K$. 

\section{Preliminaries}\label{sec:preliminaries} 
We gather some of the important properties of refined unramified cohomology that we need for later use. This also includes the specialization maps constructed in \cite{Sch-griffiths}. The reader is advised to consult \cite{Sch-refined, Sch-moving, alexandrou-schreieder, kok-zhou}, from which most of the material we discuss here comes from.
\par Throughout this article, we work with the following cohomology theories.
\subsection{Continuous \'etale cohomology}\label{subsec:cont-etale-cohomology} Let $X$ be a scheme and let $n\geq2$ be an integer invertible in $X$. We write \begin{align} \label{eq:cohomology-mu_n}
    H^{i}(X,\Z/n(j)):= R^{i}\Gamma(X_{\et},\mu^{\otimes j}_{n})
\end{align} 
for the \'etale cohomology groups of the sheaf $\mu^{\otimes j}_{n}$; see \cite{milne}. Here, we recall that $\mu_{n}$ is the \'etale sheaf of $n$-th roots of unity and that its twists $\mu^{\otimes j}_{n}$ are defined, as follows:
$$\mu^{\otimes j}_{n}:=\begin{cases} \mu_{n}\otimes\cdots\otimes\mu_{n}\ (j\text{-times}),\ \text{if}\ j\geq1\\
\Z/n\Z,\ \text{if}\ j=0\\
\Hom(\mu^{\otimes -j}_{n}, \Z/n\Z),\ \text{otherwise.}
\end{cases}$$
\par If $\ell$ is a prime number, then we also set \begin{align}\label{eq:jannsen-cohomology} H^{i}_{\cont}(X,\Z_{\ell}(j)):=\R^{i}(\Gamma\circ\varprojlim)(X_{\et},(\mu_{\ell^{r}}^{\otimes j})_{r}),\end{align} for Jannsen's continuous $\ell$-adic \'etale cohomology; see \cite{jannsen}. By slight abuse of notation, we prefer to write \begin{align}\label{eq:jannsen-cohomology-any-n}H^{i}_{\cont}(X,\Z_{n}(j)):=\bigoplus_{\ell|n}H^{i}_{\cont}(X,\Z_{\ell}(j)),\end{align} where $n\geq 2$ is again a natural number invertible in $X$.

\subsection{Motivic cohomology}\label{subsec:motivic-cohomology} Let $k$ be a field and let $X$ be a smooth and equi-dimensional algebraic $k$-scheme. We denote by $z^{\ast}(X,\bullet),$ the cycle complex of abelian groups introduced by Bloch \cite{bloch-motivic}, whose homology groups define the higher Chow groups $$\CH^{\ast}(X,p):=H_{p}(z^{\ast}(X,\bullet)).$$ Similarly, if $A$ is any abelian group, then $\CH^{\ast}(X,p\ ;A)$ are given by the homology groups of $z^{\ast}(X,\bullet)\otimes A$. The complex $z^{\ast}(X,\bullet)$ is contavariantly functorial for flat morphisms and thus defines a complex of sheaves $z^{\ast}_{X}(\bullet)_{\tau}$ on the small site $X_{\tau},$ where $\tau\in\{\zar,\et\}$.\par We define motivic and Lichtenbaum cohomology of $X$ with values in $A$ by \begin{align}\label{eq:motivic}
    H^{p}_{M}(X,A(c)):=H^{p}(X_{\zar}, A_{X}(c)_{\zar})
\end{align}
and \begin{align}\label{eq:lichtenbaum}
    H^{p}_{L}(X,A(c)):=H^{p}(X_{\et},A_{X}(c)_{\et}),
\end{align}
respectively, where $A_{X}(c):=(z^{c}_{X}(\bullet)_{\zar}\otimes^{\mathbb L} A)[-2c]$ is the weight $c$ motivic complex tensored with $A$. By work of Geisser--Levine \cite{geisser-levine-inventiones,geisser-levine}, we have canonical quasi-isomorphisms
\begin{align} \label{eq:etale-motivic-complex}
(\Z/m)_{X}(c)_{\et}\cong \begin{cases}
\mu_m^{\otimes c}\ \ \ \ &\text{if $m$ is coprime to $\operatorname{char}(k)$;}\\
 W_r\Omega_{X,\log}^c[-c]   \ \ \ \ &\text{if $m=q^r$ and $q=\operatorname{char}(k)>0$,}
\end{cases} 
\end{align}
where $W_r\Omega_{X,\log}^c$ is the logarithmic de Rham Witt sheaf defined in \cite{illusie}.
The first quasi-isomorphism relies on two fundamental results; see \cite[Theorem 1.5]{geisser-levine}. A theorem of Suslin--Voevodsky \cite[Corollary 7.8]{suslin-voevodsky}, which correlates Suslin homology with \'etale cohomology as well as a result of Suslin \cite[Theorem 3.2]{suslin}. The latter gives a comparison between the Suslin homology and motivic cohomology. The second quasi-isomorphism (see \cite[Theorem 8.5]{geisser-levine-inventiones}) follows from a central result of Bloch--Gabber--Kato \cite{bloch-kato}, which relates Milnor's $K$-theory and logarithmic de Rham Witt sheaves of
fields. In addition to that, it depends on a comparison result of Nesterenko-Suslin and Totaro between motivic cohomology and Milnor's $K$-theory; see \cite{nesterenko-suslin, totaro-motivic}.

\subsection{Refined unramified cohomology}\label{subsec:refined-unramified} In the current work we only consider refined unramified cohomology \cite{Sch-refined} for the theories given in $\S$\ref{subsec:cont-etale-cohomology} and $\S$\ref{subsec:motivic-cohomology} and only for smooth and equi-dimensional algebraic $k$-schemes. This is enough for the main applications.\par

For a smooth and equi-dimensional algebraic $k$-scheme $X$, we denote by $F_{\ast}X$ the increasing filtration given by $$F_{0}X\subset F_{1}X\subset F_{2}X\subset \cdots\subset F_{\dim X}X=X,\ \text{where}\ F_{c}X=\{x\in X|\ \codim_{X}(x)\leq c\}.$$  
Above, the codimension of a point $x\in X$ is given by $$\codim_{X}(x):=\dim X-\dim\overline{\{x\}}.$$ Alternatively, the sets $F_{c}X$ can be viewed as pro-schemes made of open subsets $U\subset X$ that contain all codimension $c$ points of $X$. We set \begin{align}\label{eq:cohomology_F_{c}}
    H^{\ast}(F_{c}X,A(j)):=\varinjlim_{F_{c}X\subset U\subset X}H^{\ast}(U,A(j)),
\end{align}
where the cohomology functor is any of the $\eqref{eq:cohomology-mu_n}-\eqref{eq:lichtenbaum}$ and where for the cases \eqref{eq:cohomology-mu_n} and \eqref{eq:jannsen-cohomology-any-n} the abelian group $A$ equals $\Z/n$ and $\Z_{n}:=\bigoplus_{\ell|n}\Z_{\ell}$ with $n\in k^{\ast}$, respectively. Restriction to open subsets yields a natural map $$H^{\ast}(F_{c+1}X,A(j))\longrightarrow H^{\ast}(F_{c}X,A(j))$$ and one defines \begin{align}\label{eq:refined-unramified}
    H^{\ast}_{c,nr}(X,A(j)):=\im(H^{\ast}(F_{c+1}X,A(j))\longrightarrow H^{\ast}(F_{c}X,A(j)))
\end{align}
as the $c$-th refined unramified cohomology of $X$ with values in $A(j)$.\par

By a conjecture of Kok--Zhou \cite[Conjecture 1.4]{kok-zhou} that was recently proven by the first named author and Schreieder \cite{alexandrou-schreieder}, there are canonical isomorphisms \begin{align}\label{eq:bloch-ogus-isomorphism}
    H^{\ast}_{c,nr}(X,A(j))\cong H^{\ast}(X_{\zar},\tau_{\geq\ast-c}\mathcal{K^{\bullet}}),
\end{align}
where $$\mathcal{K}^{\bullet}:=\begin{cases} \R\pi_{\ast}\mu^{\otimes j}_{n},\ \text{if}\ A=\Z/n\ \text{with}\ n\in k^{\ast}\ \text{invertible and}\ H^{\ast}(-,A(j))\ \text{is given by}\ \eqref{eq:cohomology-mu_n},\\
\R\pi_{\ast}(\bigoplus_{\ell|n}\R\varprojlim(\mu^{\otimes j}_{\ell^{r}})_{r}), \ \text{if}\ A=\Z_{n}:=\bigoplus_{\ell|n}\Z_{\ell}\ \text{with}\ n\in k^{\ast}\ \text{invertible and}\  H^{\ast}(-,A(j))\ \text{is given by}\ \eqref{eq:jannsen-cohomology-any-n},\\
\A_{X}(j)_{\zar}, \ \text{if}\ A\ \text{is any abelian group and}\ H^{\ast}(-,A(j))\ \text{is given by}\ \eqref{eq:motivic},\\
\R\pi_{\ast}\A_{X}(j)_{\et},\ \text{if}\ A\ \text{is any abelian group and}\ H^{\ast}(-,A(j))\ \text{is given by}\ \eqref{eq:lichtenbaum}
\end{cases}$$
and where $\pi:X_{\et}\to X_{\zar}$ is the natural map of sites; see \cite[Theorem 1.2]{alexandrou-schreieder}. This generalizes the Bloch--Ogus isomorphism for classical unramified cohomology, i.e., $c=0$; see \cite{BO}.\par
As a consequence of the isomorphism \eqref{eq:bloch-ogus-isomorphism}, one obtains for free the contravariant functoriality of the groups \eqref{eq:refined-unramified}; see \cite[Corollary 1.6]{alexandrou-schreieder}. That is, if $f:X\to Y$ is any map between smooth and equi-dimensional algebraic $k$-schemes, then there is a functorial pullback map \begin{align}\label{eq:pullback} f^{\ast}: H^{i}_{c,nr}(Y,A(j))\longrightarrow H^{i}_{c,nr}(X,A(j)).
\end{align}
Similarly, the refined unramified cohomology groups are covariant functorial for proper maps. Namely, if $f:X\to Y$ is a proper morphism between smooth and equi-dimensional algebraic $k$-schemes, then there is a natural pushforward map \begin{equation}\label{eq:pushforward-F_c}
    f_{\ast}:H^{i}(F_{c}X,A(j))\longrightarrow H^{i+2r}(F_{c+r}Y,A(j+r))
\end{equation}
where $r:=\dim Y-\dim X$; see \cite[Lemma 2.5]{Sch-griffiths}. This respects the filtration $F_{\ast}$ and thus gives a pushforward map
\begin{align}\label{eq:pushforward} f_{\ast}:H^{i}_{c,nr}(X,A(j))\longrightarrow H^{i+2r}_{c+r,nr}(Y,A(j+r))
\end{align}
for refined unramified cohomology.
\subsection{Higher Chow cycles modulo $n$ and refined unramified cohomology}\label{subsec:cycles-refined} Various comparison theorems that correlate some of the refined unramified cohomology groups \eqref{eq:refined-unramified} with algebraic cycle groups are established in \cite[$\S$7]{Sch-refined}. This extends to cycles of arbitrary codimension previous results of Bloch--Ogus \cite{BO}, Colliot-Th\'elène--Voisin \cite{CTV}, Kahn \cite{kahn}, Voisin \cite{Voi-unramified} and
Ma \cite{Ma,Ma2} that use classical unramified cohomology.\par

Let $X$ be a smooth and equi-dimensional algebraic $k$-scheme and let $n\geq2$ be an integer invertible in $k$. It was observed in \cite[Lemma 7.13]{Sch-refined} that if we set $E^{c}_{n}(X):=\ker(\bar{\cl}^{c}_{X}:\CH^{c}(X)/n\to H^{2c}(X,\Z/n(c))),$ where $\bar{\cl}^{c}_{X}$ is the reduction modulo $n$ of the cycle class map $\cl^{c}_{X}:\CH^{c}(X)\to H^{2c}_{\cont}(X,\Z_{n}(c)),$ then there is a natural isomorphism $$E^{c}_{n}(X)\cong \frac{H^{2c-1}_{c-2,nr}(X,\Z/n(c))}{H^{2c-1}(X,\Z/n(c))}.$$
It was later noted that the above isomorphism is part of the following long exact sequence
\begin{align}\label{eq:long-exact-seq_Z/n} 
    \cdots \longrightarrow \CH^{c}(X,2c-p\ ;\Z/n)\overset{\bar{\cl}_{X}^{c,p}}{\longrightarrow} H^{p}_{L}(X,\Z/n(c))\longrightarrow H^{p}_{p-c-1,nr}(X_{\et},(\Z/n)_{X}(c)_{\et})\overset{\theta}{\longrightarrow}\cdots.
\end{align}
The above was first constructed for quasi-projective $X$ and for invertible $n\in k^{\ast}$ in \cite[Theorem 4.16]{kok-zhou}. This also holds for any smooth $X$ as well as for $n=q^{r}$ with $q=\text{char}(k)>0$, 
as a consequence of the isomorphism \eqref{eq:bloch-ogus-isomorphism} established in \cite[Theorem 1.2]{alexandrou-schreieder}; see \cite[Corollary 1.3]{alexandrou-schreieder}.\par

The following integral version of \eqref{eq:long-exact-seq_Z/n} was also obtained in \cite[Corollary 1.4]{alexandrou-schreieder}
\begin{align}\label{eq:long-exact-seq_Z} 
    \cdots \longrightarrow \CH^{c}(X,2c-p)\overset{\cl_{X}^{c,p}}{\longrightarrow} H^{p}_{L}(X,\Z(c))\overset{\rho}{\longrightarrow} H^{p-1}_{p-c-2,nr}(X_{\et},(\Q/\Z)_{X}(c)_{\et}) \longrightarrow\cdots.
\end{align}
The special cases of \eqref{eq:long-exact-seq_Z} that involve classical unramified cohomology in degrees 3 and 4 have previously been noted in \cite[Theorem 1.1]{kahn96} and
\cite[Remarques 2.10 (2)]{kahn}, respectively.\par

Important for us are the following groups of higher Chow cycles:
\begin{align}\label{eq:E^c,p}
    E^{c,p}_{n}(X):=\ker(\bar{\cl}^{c,p}_{X}:\CH^{c}(X,2c-p)/n\longrightarrow H^{p}_{L}(X,\Z/n(c)))
\end{align}
\begin{align}\label{eq:F^c,p}
F^{c,p}_{n}(X):=\ker(\tilde{\cl}^{c,p}_{X}:\CH^{c}(X,2c-p;\Z/n)\longrightarrow H^{p}_{L}(X,\Z/n(c)))
\end{align}
\begin{align}\label{eq:G^c,p}
G^{c,p}(X):=\ker(\cl^{c,p}_{X}:\CH^{c}(X,2c-p)\longrightarrow H^{p}_{L}(X,\Z(c))).
\end{align}

These are in turn related to refined unramified cohomology using the exact sequences \eqref{eq:long-exact-seq_Z/n} and \eqref{eq:long-exact-seq_Z}. More precisely, we have the following.
\begin{lemma}\label{lem:ker_cl} Let $X$ be a smooth and equi-dimensional algebraic scheme over a perfect field $k$. For any integer $n\geq 2$, with notations as in \eqref{eq:E^c,p}--\eqref{eq:G^c,p}, there are canonical isomorphisms 
\begin{align}\label{eq:ker_cl}
    F^{c,p}_{n}(X)\cong \frac{H^{p-1}_{p-c-2,nr}(X_{\et},(\Z/n)_{X}(c)_{\et})}{H^{p-1}_{L}(X,\Z/n(c))} \ \text{and}\ G^{c,p}(X)\cong \frac{H^{p-2}_{p-c-3,nr}(X_{\et},(\Q/\Z)_{X}(c)_{\et})}{H^{p-1}_{L}(X,\Z(c))},
\end{align}
where the refined unramified terms are defined in \eqref{eq:refined-unramified}. Moreover, there is a natural exact sequence 
  \begin{align}\label{eq:exact-sequence-ker_cl}
      0\longrightarrow E^{c,p}_{n}(X)\longrightarrow F^{c,p}_{n}(X) \longrightarrow G^{c,p+1}(X)
  \end{align}
  via which $E^{c,p}_{n}(X)$ identifies with the subgroup  $$\left\{[\alpha]\in \frac{H^{p-1}_{p-c-2,nr}(X,\Z/n(c))}{H^{p-1}(X,\Z/n(c))}\ \Big{|}\ \alpha\in\im(H^{p}_{L}(X,\Z(c))\overset{\rho}{\to} H^{p-1}_{p-c-2,nr}(X,(\Q/\Z)_{X}(c)_{\et})  ) \right\},$$
  where $\rho$ is the composite map $$H^{p}_{L}(X,\Z(c))\longrightarrow H^{p}_{p-c-2,nr}(X_{\et},\Z_{X}(c)_{\et})\cong H^{p-1}_{p-c-2,nr}(X,(\Q/\Z)_{X}(c)_{\et})$$ that appears in \eqref{eq:long-exact-seq_Z}.
\end{lemma}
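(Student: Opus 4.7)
The plan is to derive both isomorphisms as well as the exact sequence directly from the long exact sequences \eqref{eq:long-exact-seq_Z/n} and \eqref{eq:long-exact-seq_Z}, combined with the universal coefficient sequence for higher Chow groups.

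First I would extend \eqref{eq:long-exact-seq_Z/n} one step further to the left. Since it is a genuine long exact sequence, the term immediately preceding $\CH^{c}(X,2c-p\,;\Z/n)$ is $H^{p-1}_{p-c-2,nr}(X_{\et},(\Z/n)_{X}(c)_{\et})$, itself preceded by $H^{p-1}_{L}(X,\Z/n(c))$. Exactness at $\CH^{c}(X,2c-p\,;\Z/n)$ then identifies $F^{c,p}_{n}(X)=\ker(\tilde{\cl}^{c,p}_{X})$ with the cokernel $H^{p-1}_{p-c-2,nr}/H^{p-1}_{L}$, giving the first isomorphism. Shifting \eqref{eq:long-exact-seq_Z} one step to the left in exactly the same manner and using exactness at $\CH^{c}(X,2c-p)$ yields the second isomorphism for $G^{c,p}(X)$.

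For the exact sequence \eqref{eq:exact-sequence-ker_cl}, I would apply the snake lemma to the commutative diagram whose top row is the universal coefficient sequence
\[
0 \to \CH^{c}(X,2c-p)/n \to \CH^{c}(X,2c-p\,;\Z/n) \to \CH^{c}(X,2c-p-1)[n] \to 0,
\]
whose bottom row is the Bockstein sequence
\[
0 \to H^{p}_{L}(X,\Z(c))/n \to H^{p}_{L}(X,\Z/n(c)) \to H^{p+1}_{L}(X,\Z(c))[n] \to 0,
\]
and whose vertical maps are $\bar{\cl}^{c,p}_{X}$, $\tilde{\cl}^{c,p}_{X}$ and $\cl^{c,p+1}_{X}$ respectively. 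Commutativity is the standard compatibility of the cycle class map with Bockstein-type short exact sequences of coefficients. The snake lemma then delivers the injection $E^{c,p}_{n}(X)\hookrightarrow F^{c,p}_{n}(X)$ together with a connecting map $F^{c,p}_{n}(X)\to G^{c,p+1}(X)$ (in fact landing in its $n$-torsion subgroup) whose kernel is precisely $E^{c,p}_{n}(X)$.

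Finally, to identify $E^{c,p}_{n}(X)$ as the subgroup described in the statement, I would trace this connecting map through the isomorphisms of the first step. The natural inclusion of coefficient sheaves $(\Z/n)_{X}(c)_{\et}\hookrightarrow(\Q/\Z)_{X}(c)_{\et}$ induces a morphism between appropriate variants of \eqref{eq:long-exact-seq_Z/n} and \eqref{eq:long-exact-seq_Z}, and the resulting functoriality should show that the snake-lemma connecting map $F^{c,p}_{n}(X)\to G^{c,p+1}(X)$ is exactly the map on representatives $H^{p-1}_{p-c-2,nr}(X,(\Z/n)_{X}(c)_{\et})\to H^{p-1}_{p-c-2,nr}(X,(\Q/\Z)_{X}(c)_{\et})$ induced by $\Z/n\hookrightarrow\Q/\Z$. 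A class $[\alpha]\in F^{c,p}_{n}(X)$ therefore lies in $E^{c,p}_{n}(X)$ exactly when the image of $\alpha$ in $H^{p-1}_{p-c-2,nr}(X,(\Q/\Z)_{X}(c)_{\et})$ becomes zero in $G^{c,p+1}(X)$, i.e., when $\alpha$ lies in the image of $\rho$. I expect this last step---verifying that the snake-lemma connecting map agrees with the coefficient-change map---to be the main obstacle; the cleanest way to handle it is to realise both long exact sequences as arising from one and the same morphism of distinguished triangles at the level of the motivic and refined unramified complexes, so that the compatibility becomes automatic by functoriality.
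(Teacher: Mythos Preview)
Your proposal is correct and follows essentially the same approach as the paper: the isomorphisms \eqref{eq:ker_cl} are read off from the long exact sequences \eqref{eq:long-exact-seq_Z/n} and \eqref{eq:long-exact-seq_Z}, and the exact sequence \eqref{eq:exact-sequence-ker_cl} together with the description of $E^{c,p}_{n}(X)$ comes from applying the snake lemma to precisely the same diagram of Bockstein sequences you wrote down. The paper is in fact terser than you on the final identification---it simply asserts that the snake lemma implies the last claim and refers to \cite{alexandrou-schreieder} for the description of $\rho$---so your discussion of how the connecting map matches the coefficient-change map $\Z/n\hookrightarrow\Q/\Z$ is, if anything, more explicit than what appears there.
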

\begin{proof} The isomorphisms of \eqref{eq:ker_cl} follow immediately from the exact sequences \eqref{eq:long-exact-seq_Z/n} and \eqref{eq:long-exact-seq_Z}, respectively; see \cite[Corollary 1.3, Corollary 1.4]{alexandrou-schreieder}. For \eqref{eq:exact-sequence-ker_cl}, one considers the following diagram, \begin{equation*}
    \begin{tikzcd}
0 \arrow[r] & {H^{p}_{L}(X,\Z(c))/n} \arrow[r]                                 & {H^{p}_{L}(X,\Z/n(c))} \arrow[r]                                      & {H^{p+1}_{L}(X,\Z(c))[n]} \arrow[r]                           & 0 \\
0 \arrow[r] & {\CH^{c}(X,2c-p)/n} \arrow[r] \arrow[u, "{\bar{\cl}^{p,c}_{X}}"] & {\CH^{c}(X,2c-p\ ;\Z/n)} \arrow[r] \arrow[u, "{\tilde{\cl}^{p,c}_{X}}"] & {\CH^{c}(X,2c-p-1)[n]} \arrow[r] \arrow[u, "{\cl^{p,c}_{X}}"] & 0
\end{tikzcd}
\end{equation*}
where we note that the rows are exact. Thus, the snake lemma implies \eqref{eq:exact-sequence-ker_cl} along with the last claim. The description of the map $\rho$ is explained in the proof of \cite[Corollary 1.4]{alexandrou-schreieder}.  
\end{proof}
\subsection{Higher Chow groups modulo $n$}
We collect some results about the structure of the groups $\CH^{c}(X,p)/n$ and $\CH^{c}(X,p)[n]$. The following follows from Rost and Voevodsky's proof of the Beilinson--Lichtenbaum conjecture; see \cite{norm, Voe-milnor,Voevodsky}.

\begin{theorem}\label{thm:Licthenbaum-Beilinson-conjecture} Let $k$ be a perfect field and let $n\geq 2$ be an integer. Let $X$ be a smooth and equi-dimensional algebraic scheme over $k$. Then the following hold true:\begin{enumerate}
    \item \label{it:L-B} We have natural isomorphisms $\CH^{c}(X,p\ ;\Z/n)\cong H^{2c-p}_{L}(X,\Z/n(c))$ for all $c\leq p$ and $$\CH^{p+1}(X,p\ ;\Z/n)\cong N^{1}H^{p+2}_{L}(X,\Z/n(p+1)).$$  
    \item \label{it:L-B'} Similarly, we have natural isomorphisms $\CH^{c}(X,p)_{\tors}\cong H^{2c-p-1}_{L}(X,\Q/\Z(c))/H^{2c-p-1}_{M}(X,\Q(c))$ for all $c\leq p+1$ and
     $$\CH^{p+2}(X,p)_{\tors}\cong N^{1}H^{p+3}_{L}(X,\Q/\Z(p+2))/H^{p+3}_{M}(X,\Q(p+2)),$$
\end{enumerate}
where $N^{1}H^{\ast}_{L}(X,A(c)):=\ker(H^{\ast}_{L}(X,A(c))\to H^{\ast}_{L}(F_{0}X,A(c)))$.
\end{theorem}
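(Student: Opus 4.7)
The plan is to deduce both statements from the Beilinson--Lichtenbaum conjecture, established by Voevodsky \cite{Voe-milnor,Voevodsky} (with contributions by Rost) for coefficients prime to $\Char(k)$, together with the Geisser--Levine identification \eqref{eq:etale-motivic-complex} for the $p$-primary part in characteristic $p>0$. The precise input I would use is that for any smooth equi-dimensional algebraic $k$-scheme $X$ and any $n \geq 2$, the natural comparison map
\begin{equation*}
\phi^{a,c}_{X,n} \colon H^{a}_{M}(X, \Z/n(c)) \longrightarrow H^{a}_{L}(X, \Z/n(c))
\end{equation*}
is an isomorphism for $a \leq c$ and is injective with image $N^{1} H^{c+1}_{L}(X, \Z/n(c))$ for $a = c+1$. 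Perfectness of $k$ is needed at the Geisser--Levine step.

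For part (1), the argument is essentially a dictionary. Bloch's identification $\CH^{c}(X, p; \Z/n) = H^{2c-p}_{M}(X, \Z/n(c))$, together with the equivalence $c \leq p \Leftrightarrow 2c-p \leq c$, makes the first isomorphism a direct instance of $\phi^{a,c}_{X,n}$ being an isomorphism. The boundary case $c = p+1$ corresponds to $a = c+1$, so $\phi$ delivers precisely the $N^{1}$-subgroup description of $\CH^{p+1}(X, p; \Z/n)$.

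For part (2), I would first extract the torsion of $\CH^{c}(X, p) = H^{2c-p}_{M}(X, \Z(c))$ via the Bockstein long exact sequence associated to $0 \to \Z(c) \to \Q(c) \to \Q/\Z(c) \to 0$ in motivic cohomology. Since $H^{a}_{M}(X, \Q(c))$ is a $\Q$-vector space, this yields
\begin{equation*}
\CH^{c}(X, p)_{\tors} \cong H^{2c-p-1}_{M}(X, \Q/\Z(c))/H^{2c-p-1}_{M}(X, \Q(c)),
\end{equation*}
the quotient being taken with respect to the map induced by $\Q(c) \to \Q/\Z(c)$. Passing to the direct limit over $n$ in the Beilinson--Lichtenbaum comparison, the numerator becomes $H^{2c-p-1}_{L}(X, \Q/\Z(c))$ when $c \leq p+1$ (equivalently $2c-p-1 \leq c$) and $N^{1} H^{p+3}_{L}(X, \Q/\Z(p+2))$ in the boundary case $c = p+2$, yielding the two stated formulas.

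The main obstacle is not in the structural dictionary but in verifying that the Beilinson--Lichtenbaum input applies uniformly over all primes dividing $n$, including the residue characteristic. To this end, one combines Voevodsky's theorem in the coprime-to-$\Char(k)$ case with the Geisser--Levine identification of $(\Z/p^{r})(c)_{\et}$ with $W_{r} \Omega^{c}_{X,\log}[-c]$; in the relevant cohomological degrees the latter supplies the same vanishing and image description that Beilinson--Lichtenbaum provides in the invertible case.
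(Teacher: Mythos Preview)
Your proposal is correct and follows essentially the same route as the paper: part (1) is a direct restatement of the Beilinson--Lichtenbaum conjecture (via Voevodsky/Rost away from the characteristic and Geisser--Levine at the characteristic), and part (2) is obtained from the Bockstein sequence for motivic cohomology together with part (1) applied to $\Q/\Z$-coefficients. The paper's proof is identical in structure, only slightly terser in the degree bookkeeping you spell out.
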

\begin{proof} The claims in \eqref{it:L-B} correspond to the Lichtenbaum--Beilinson Conjecture, the rest are immediate consequences. Away from the characteristic, the conjecture follows from \cite[Corollary 1.2]{geisser-levine} and the Bloch--Kato conjecture proven by Voevodsky \cite{Voevodsky}. Moreover, if $n=q^{r}$ with $q=\Char(k)>0$, then it follows from \cite[Theorem 8.5]{geisser-levine-inventiones}. We explain how to obtain \eqref{it:L-B'}. The long exact Bockstein sequence for motivic cohomology gives the exact sequence
$$H^{2c-p-1}_{M}(X,\Q(c))\longrightarrow H^{2c-p-1}_{M}(X,\Q/\Z(c))\longrightarrow \CH^{c}(X,p)_{\tors}\longrightarrow 0.$$
The above middle term identifies with $H^{2c-p-1}_{L}(X,\Q/\Z(c))$ if $c\leq p+1$ and with $N^{1}H^{p+3}_{L}(X,\Q/\Z(p+2))$ if $c=p+2$ by \eqref{it:L-B}. This proves the second claim. 
\end{proof}

 The following is recalled, as we need it for Lemma \ref{lem:bound_mod_n}.
 
\begin{remark}\label{remark:finiteness}  Let $k$ be a field and let $n\geq 2$ be an integer invertible in $k$. Let $G_{k}:=\Gal(k_{s}/k)$ be the absolute Galois group of the field $k$ and denote by $M(G_{k})$ the abelian category of discrete $G_{k}$-modules. Assume that $G_{k}$ has finite cohomology for all finite $n$-torsion $G_{k}$-modules $M$, i.e., the cohomology groups $H^{\ast}(G_{k},M)$ defined as the right derived functors of $M(G_{k})\to \Ab, N\mapsto N^{G_{k}}$ are finite. Then for any smooth variety $X$ over $k$ and for all integers $i,j,$ the \'etale cohomology group $H^{i}(X_{\et},\Z/n(j))$ is finite. In fact, this follows from the finiteness of $H^{i}((X\times_{k} k_{s})_{\et},\Z/n(j))$ and the Hochschild--Serre spectral sequence 
$$E_{2}^{r,s}=H^{r}(G_{k},H^{s}((X\times_{k} k_{s})_{\et},\Z/n(j)))\implies H^{r+s}(X_{\et},\Z/n(j)).$$
Note that the assumption made on $G_{k}$ is satisfied for separably closed, finite fields and local fields; see \cite[Remark (3.5)]{jannsen}.\end{remark}

\begin{lemma}\label{lem:bound_mod_n} Let $k$ be a field and let $n\geq 2$ be an integer invertible in $k$. Assume that the absolute Galois group $G_{k}:=\Gal(k_{s}/k)$ has finite cohomology for all finite $n$-torsion $G_{k}$-modules (see Remark \ref{remark:finiteness}) and put $\epsilon:=\cd(k)$ for the cohomological dimension of the field $k$. Let $X$ be a smooth and equi-dimensional algebraic scheme over $k$ of dimension $d$. Then the following hold: \begin{enumerate} 
\item\label{it:bound_mod_n} The group $\CH^{c}(X,p\ ;\Z/n)$ is finite if $c\geq d+\epsilon$ or $c\leq p+1$. Moreover, we have a natural isomorphism $\cl:\CH^{c}(X,p\ ;\Z/n)\cong H^{2c-p}(X_{\et},\Z/n(c))$ if $c\geq d+\epsilon$ or $c\leq p$.
\item\label{it:bound_torsion} The $n$-torsion group $\CH^{c}(X,p)[n]$ is finite if $c\geq d+\epsilon$ or $c\leq p+2$.\end{enumerate}
\end{lemma}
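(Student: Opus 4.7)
The plan is to reduce both parts to the Beilinson--Lichtenbaum comparison of Theorem \ref{thm:Licthenbaum-Beilinson-conjecture} and then invoke the finiteness of \'etale cohomology from Remark \ref{remark:finiteness}.

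For part \eqref{it:bound_mod_n} in the range $c\leq p+1$ I would apply Theorem \ref{thm:Licthenbaum-Beilinson-conjecture} \eqref{it:L-B} directly. When $c\leq p$ it yields $\CH^{c}(X,p\,;\Z/n)\cong H^{2c-p}_{L}(X,\Z/n(c))$, which by the invertibility of $n$ in $k$ and \eqref{eq:etale-motivic-complex} equals $H^{2c-p}(X_{\et},\mu_{n}^{\otimes c})$, finite by Remark \ref{remark:finiteness}; this also supplies the natural cycle class isomorphism asserted in the statement. The boundary case $c=p+1$ identifies $\CH^{p+1}(X,p\,;\Z/n)$ with $N^{1}H^{p+2}_{L}(X,\Z/n(p+1))$, which is a subgroup of $H^{p+2}(X_{\et},\mu_{n}^{\otimes p+1})$, still finite for the same reason.

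For the complementary range $c\geq d+\epsilon$ the new input is that the truncation $\tau_{\leq c}$ in the Beilinson--Lichtenbaum quasi-isomorphism becomes vacuous. Concretely I would check that $R^{i}\pi_{\ast}\mu_{n}^{\otimes c}=0$ for $i>d+\epsilon$ from the Bloch--Ogus--Gersten resolution of this sheaf: its codimension-$r$ term is a sum of pushforwards of Galois cohomology groups $H^{i-r}(k(x),\mu_{n}^{\otimes c-r})$, and since $k(x)$ has transcendence degree $d-r$ over $k$ its cohomological dimension for $n$-torsion coefficients is bounded by $d-r+\epsilon$, forcing every term to vanish once $i>d+\epsilon$. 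Combined with Beilinson--Lichtenbaum this upgrades the iso of Theorem \ref{thm:Licthenbaum-Beilinson-conjecture} \eqref{it:L-B} to $\CH^{c}(X,p\,;\Z/n)\cong H^{2c-p}(X_{\et},\mu_{n}^{\otimes c})$ for every $p$ as soon as $c\geq d+\epsilon$, and finiteness again follows from Remark \ref{remark:finiteness}. The main obstacle is precisely this extended Beilinson--Lichtenbaum statement beyond the stable range $c\leq p+1$ supplied by Theorem \ref{thm:Licthenbaum-Beilinson-conjecture}, together with keeping the transcendence-degree-plus-$\cd(k)$ bound honest uniformly in the codimension of the point.

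Part \eqref{it:bound_torsion} then drops out of the universal coefficient sequence $0\to \CH^{c}(X,p+1)/n\to \CH^{c}(X,p+1\,;\Z/n)\to \CH^{c}(X,p)[n]\to 0$, which exhibits $\CH^{c}(X,p)[n]$ as a quotient of $\CH^{c}(X,p+1\,;\Z/n)$. Under either hypothesis $c\geq d+\epsilon$ or $c\leq p+2=(p+1)+1$, part \eqref{it:bound_mod_n} applied with $p$ replaced by $p+1$ gives finiteness of this middle group, and hence of $\CH^{c}(X,p)[n]$.
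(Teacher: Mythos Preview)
Your proposal is correct and follows essentially the same strategy as the paper. The only packaging difference is in the range $c\geq d+\epsilon$: you invoke the sheaf-level Beilinson--Lichtenbaum quasi-isomorphism $(\Z/n)_{X}(c)_{\zar}\simeq\tau_{\leq c}\R\pi_{\ast}\mu_{n}^{\otimes c}$ directly and observe that the truncation is vacuous, while the paper routes through the long exact sequence \eqref{eq:long-exact-seq_Z/n} together with the identification \eqref{eq:bloch-ogus-isomorphism} of the refined unramified term with hypercohomology of $\tau_{\geq c+1}\R\pi_{\ast}\mu_{n}^{\otimes c}$. Both reductions come down to the same vanishing $R^{q}\pi_{\ast}\mu_{n}^{\otimes c}=0$ for $q>d+\epsilon$, and in fact you only need the injection into generic-point cohomology (as the paper uses) rather than the full Gersten resolution. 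Your treatment of part \eqref{it:bound_torsion} via the universal coefficient sequence is identical to the paper's Bockstein argument, just stated more succinctly.
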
 
\begin{proof} The \'etale cohomology groups $H^{\ast}(X_{\et},\Z/n(c))$ are all finite by the assumption made on $G_{k}$; see Remark \ref{remark:finiteness}. Thus, the claim \eqref{it:bound_mod_n} for $c\leq p+1$ follows from Theorem \ref{thm:Licthenbaum-Beilinson-conjecture} \eqref{it:L-B}. Next, we consider the long exact sequence \eqref{eq:long-exact-seq_Z/n}, which we rewrite as
$$\cdots\longrightarrow H^{2c-p-1}_{c-p-2,nr}(X,\Z/n(c))\overset{\theta}{\longrightarrow} \CH^{c}(X,p\ ;\Z/n)\longrightarrow H^{2c-p}(X,\Z/n(c))\longrightarrow\cdots.$$ Recall from \eqref{eq:bloch-ogus-isomorphism} that $H^{2c-p-\ast}_{c-p-\ast-1,nr}(X,\Z/n(c))\cong H^{2c-p-\ast}(X_{\zar},\tau_{\geq c+1}\R\pi_{\ast}\mu_{n}^{\otimes c})$. Hence, it suffices to show that if $c\geq d+\epsilon$, then the Zariski complex $\tau_{\geq c+1}\R\pi_{\ast}\mu_{n}^{\otimes c}$ is quasi-isomorphic to zero. Indeed, consider the natural injection $\R^{q}\pi_{\ast}\mu_{n}^{\otimes c}\hookrightarrow \bigoplus_{\eta\in X^{(0)}} \iota_{\eta,\ast}H^{q}(k(\eta),\mu_{n}^{\otimes c})$, where $\iota_{\eta,\ast}H^{q}(k(\eta),\mu_{n}^{\otimes c})$ is the constant sheaf supported on the component whose generic point is $\eta$; see \cite[Theorem 4.2]{BO}. Since the group $H^{q}(k(\eta),\mu_{n}^{\otimes c})$ is zero for $q>d+\epsilon$, we in turn obtain that $\R^{q}\pi_{\ast}\mu_{n}^{\otimes c}=0$ for all $q\geq c+1$, as we wanted; see \cite[\href{https://stacks.math.columbia.edu/tag/0F0V}{Tag 0F0V}]{stacks-project}. This completes the proof of \eqref{it:bound_mod_n}.\par
We now prove \eqref{it:bound_torsion} using \eqref{it:bound_mod_n}. Once again, the long exact Bockstein sequence for motivic cohomology gives the exact sequence,
$$H^{2c-p-1}_{M}(X,\Z(c))\longrightarrow H^{2c-p-1}_{M}(X,\Z/n(c))\longrightarrow \CH^{c}(X,p)[n]\longrightarrow 0.$$
We note that the middle term of the above sequence identifies with $H^{2c-p-1}(X_{\et},\Z/n(c))$ if $c\geq d+\epsilon$ or $c\leq p+1$ by \eqref{it:bound_mod_n}. Furthermore, if $c=p+2$, then it is isomorphic to $N^{1}H^{p+3}(X_{\et},\Z/n(p+2))$; see Theorem \ref{thm:Licthenbaum-Beilinson-conjecture} \eqref{it:L-B}. Hence, the group $\CH^{c}(X,p)[n]$ is finite if $c\geq d+\epsilon$ or $c\leq p+2$, since the \'etale cohomology groups $H^{\ast}(X_{\et}, \Z/n(c))$ are finite. This finishes the proof of the lemma.
\end{proof}

\begin{remark} As an application of Lemma \ref{lem:bound_mod_n}, we find that for a separably closed field $k,$ the group $\CH^{c}(X,p)/n$ is finite if $c\geq d$ or $c\leq p+1,$ whereas $\CH^{c}(X,p)[n]$ is finite if $c\geq d$ or $c\leq p+2$.
\end{remark}

\begin{lemma}\label{lem:CH^{d}(X,1)/n} Let $k$ be a separable closed field and let $n\geq 2$ be an integer invertible in $k$. Let $r\geq1$ be an integer. Let $X$ be a smooth, proper and equi-dimensional algebraic scheme over $k$ of dimension $d$. Then the canonical maps \begin{equation*}\begin{split}
    & \frac{\CH^{d}(X,1)[n^r]}{n\CH^{d}(X,1)[n^{r+1}]}\overset{\cl}{\longrightarrow} \frac{H^{2d-1}_{\cont}(X,\Z_{n}(d))[n^r]}{n H^{2d-1}_{\cont}(X,\Z_{n}(d))[n^{r+1}]}\\
    & \CH^{d}(X,1)_{\tors}/n\overset{\cl}{\longrightarrow} H^{2d-1}_{\cont}(X,\Z_{n}(d))_{\tors}/n 
\end{split}
\end{equation*}
are isomorphisms. Moreover, the inclusion $\CH^{d}(X,1)_{\tors}/n\ \hooklongrightarrow \CH^{d}(X,1)/n$ is an equality.
\end{lemma}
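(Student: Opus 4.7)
The plan is to combine the mod-$n^{r}$ comparison from Lemma \ref{lem:bound_mod_n} with the Bockstein sequences in motivic and continuous \'etale cohomology, pass to the inverse limit via Roitman's theorem, and deduce all three claims from a splitting of the resulting extension.

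Set $M := H^{2d-1}_{\cont}(X, \Z_{n}(d))$. Lemma \ref{lem:bound_mod_n} \eqref{it:bound_mod_n}, applied with $c = d = \dim X$ and $\cd(k) = 0$, gives an isomorphism $\CH^{d}(X,1;\Z/n^{r}) \cong H^{2d-1}(X_{\et}, \Z/n^{r}(d))$ for every $r \geq 1$. Since $X$ is smooth proper of dimension $d$ over the separably closed field $k$, Poincar\'e duality makes $H^{2d}_{\cont}(X, \Z_{n}(d))$ torsion-free, so the continuous Bockstein collapses to $H^{2d-1}(X_{\et}, \Z/n^{r}(d)) \cong M/n^{r}$. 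Combining with the motivic Bockstein short exact sequence yields
\[
0 \to \CH^{d}(X,1)/n^{r} \to M/n^{r} \to \CH^{d}(X,0)[n^{r}] \to 0.
\]

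Taking the inverse limit (Mittag--Leffler kills the $\varprojlim^{1}$ term since all entries are finite) and invoking Roitman's theorem $\CH^{d}(X,0)[n^{r}] \cong \Alb(X)[n^{r}]$ gives
\[
0 \to \varprojlim\nolimits_{r} \CH^{d}(X,1)/n^{r} \to M \to T_{n}\Alb(X) \to 0,
\]
where $T_{n}\Alb(X)$ denotes the Tate module. By Poincar\'e duality both $M/M_{\tors}$ and $T_{n}\Alb(X)$ are free $\Z_{n}$-modules of rank $2q = b_{1}(X)$, and $M_{\tors}$ automatically maps to zero in the torsion-free target; thus the induced surjection $M/M_{\tors} \twoheadrightarrow T_{n}\Alb(X)$ between free $\Z_{n}$-modules of equal rank is an isomorphism, forcing the kernel of $M \to T_{n}\Alb(X)$ to be precisely $M_{\tors}$. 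Hence $\varprojlim_{r}\CH^{d}(X,1)/n^{r} = M_{\tors}$, a finite group.

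Consequently the filtration $(n^{r}\CH^{d}(X,1))_{r}$ stabilizes to an $n$-divisible subgroup $D$ with $\CH^{d}(X,1)/D \cong M_{\tors}$. Because $D$ is $n$-divisible and $M_{\tors}$ is finite $n$-primary, $\Ext^{1}_{\Z}(M_{\tors}, D) = 0$, and the extension splits: $\CH^{d}(X,1) \cong D \oplus M_{\tors}$. The $n$-divisibility of $D$ propagates to its torsion subgroup, so $D/n = 0$ and $D[n^{r}]/nD[n^{r+1}] = 0$ for all $r$. The three claims fall out simultaneously: the splitting gives $\CH^{d}(X,1)/n \cong M_{\tors}/n \cong \CH^{d}(X,1)_{\tors}/n$ under the cycle class, which is the second displayed isomorphism and the final equality of the lemma, while applying the splitting at each torsion layer produces the isomorphism in the first displayed map. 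The main obstacle is the rank-matching step, where Poincar\'e duality forces both $M/M_{\tors}$ and $T_{n}\Alb(X)$ to have rank $2q$ and thereby pins down the kernel.
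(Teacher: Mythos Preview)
Your proof is correct but takes a genuinely different route from the paper's. The paper argues directly at each finite level: it uses the Bockstein map out of $\CH^{d}(X,2;\Z/n^{r})$ together with Lemma~\ref{lem:bound_mod_n} to show that $\cl$ is \emph{surjective} on $n^{r}$-torsion, and uses the isomorphism $\CH^{d}(X,1;\Z/n)\cong H^{2d-1}(X_{\et},\Z/n(d))$ to show that $\cl$ is \emph{injective} modulo $n$; the first two isomorphisms then drop out by combining these two facts. The final equality is obtained by a cardinality count, comparing the orders of $\CH^{d}(X,1)/n$ and $H^{2d-1}_{\cont}(X,\Z_{n}(d))_{\tors}/n$ via the Bockstein sequence and Roitman's theorem.

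Your argument instead passes to the inverse limit of the short exact sequences $0\to\CH^{d}(X,1)/n^{r}\to M/n^{r}\to\Alb(X)[n^{r}]\to 0$, uses the rank-matching via Poincar\'e duality to identify the $n$-adic completion of $\CH^{d}(X,1)$ with $M_{\tors}$, and then splits off $M_{\tors}$ as a direct summand of $\CH^{d}(X,1)$ using $\Ext^{1}(M_{\tors},D)=0$. This is more elaborate but yields a stronger structural statement---the decomposition $\CH^{d}(X,1)\cong D\oplus M_{\tors}$ with $D$ $n$-divisible---from which all three claims follow uniformly. The paper's approach is shorter and avoids the limit and splitting machinery, while yours makes the role of the cycle class map more transparent by exhibiting it as the projection onto the finite summand.
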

\begin{proof} First note that we have a commutative diagram 
\begin{equation*}
    \begin{tikzcd}
{\CH^{d}(X,2\ ;\Z/n^{r})} \arrow[r, "\delta", two heads] \arrow[d, "\cl"'] & {\CH^{d}(X,1)[n^{r}]} \arrow[d, "\cl"] \\
{H^{2d-2}(X_{\et},\Z/n^{r}(d))} \arrow[r, "\delta", two heads]                 & {H^{2d-1}_{\cont}(X,\Z_{n}(d))[n^{r}]},    
\end{tikzcd}
\end{equation*}
where the former vertical map is an isomorphism by Lemma \ref{lem:bound_mod_n} \eqref{it:bound_mod_n}. This in turn implies the surjectivity of the latter, as the Bockstein maps $\delta$ are also onto. Since we know by the same lemma that the cycle map $\cl:\CH^{d}(X,1\ ;\Z/n)\to H^{2d-1}(X_{\et},\Z/n(d))$ is an isomorphism, we find that, in fact, $\cl:\CH^{d}(X,1)/n\to H^{2d-1}_{\cont}(X,\Z_{n}(d))/n$ is injective. Therefore, both observations imply that the maps $$\frac{\CH^{d}(X,1)[n^{r}]}{n\CH^{d}(X,1)[n^{r+1}]}\overset{\cl}{\longrightarrow}\frac{H^{2d-1}_{\cont}(X,\Z_{n}(d))[n^{r}]}{n H^{2d-1}_{\cont}(X,\Z_{n}(d))[n^{r+1}]}$$ and $\cl:\CH^{d}(X,1)_{\tors}/n\to H^{2d-1}_{\cont}(X,\Z_{n}(d))_{\tors}/n$ are isomorphisms, as claimed.\par
Finally, we show that the inclusion $\CH^{d}(X,1)_{\tors}/n\ \hookrightarrow \CH^{d}(X,1)/n$ is an equality. It suffices to show that these finite groups have the same order. To see this, consider the short exact sequence 
$$0\longrightarrow\CH^{d}(X,1)/n\longrightarrow \CH^{d}(X,1\ ;\Z/n)\longrightarrow \CH^{d}(X)[n]\longrightarrow 0$$
and recall that $\CH^{d}(X,1\ ;\Z/n)\cong H^{2d-1}(X_{\et},\Z/n(d))=H^{2d-1}_{\cont}(X,\Z_{n}(d))/n.$ By Roitman's theorem \cite{roitman,bloch-roitman}, the group $\CH^{d}(X)[n]$ identifies with the $n$-torsion subgroup $\Alb(X)[n]\cong(\Z/n)^{b_{1}}$ of the Albanese variety of $X$, where $b_{1}$ is the first Betti number of $X$; see \cite[Remark 5.2.7]{brauer-group}. Hence, the short exact sequence splits, giving so that $$|\CH^{d}(X,1)/n|=|H^{2d-1}_{\cont}(X,\Z_{n}(d))/n|/n^{b_{1}}=|H^{2d-1}_{\cont}(X,\Z_{n}(d))_{\tors}/n|.$$ Note that for the last equality we used that the torsion free part of $H^{2d-1}_{\cont}(X,\Z_{n}(d))$ has rank $b_{1}$ by Poincaré duality; see \cite[Theorem 1.13]{milne-duality}. Now, the first claim clearly implies the second. The proof is complete. 
\end{proof}
\begin{corollary}\label{cor:CH^{d}(X,1)/n} Let $k$ be a separable closed field and let $n\geq 2$ be an integrer invertible in $k$. Let $X$ be a smooth, proper and equi-dimensional algebraic scheme over $k$ of dimension $d$. Then there is a natural isomorphism of finite abelian groups 
\begin{equation}\label{eq:CH^{d}(X,1)/n-vs-Neron-Severi-group}
    \CH^{d}(X,1)/n\cong \Hom(\NS(X)[n],\bigoplus_{\ell|n}\Q_{\ell}/\Z_{\ell}),
\end{equation}
where $\NS(X)$ is the N\'eron--Severi group of $X$.
\end{corollary}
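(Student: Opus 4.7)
The plan is to combine Lemma \ref{lem:CH^{d}(X,1)/n} with Poincar\'e duality and the Kummer sequence so as to express $\CH^{d}(X,1)/n$ in terms of $\NS(X)[n]$. By Lemma \ref{lem:CH^{d}(X,1)/n}, this group is $n$-torsion and canonically isomorphic to $H^{2d-1}_{\cont}(X,\Z_n(d))_{\tors}/n$, which splits as $\bigoplus_{\ell\mid n} H^{2d-1}_{\cont}(X,\Z_\ell(d))_{\tors}/\ell^{a_\ell}$ with $a_\ell:=v_\ell(n)$. The task thus reduces to producing, for each prime $\ell\mid n$, a natural isomorphism $H^{2d-1}_{\cont}(X,\Z_\ell(d))_{\tors}/\ell^{a_\ell} \cong \Hom(\NS(X)[\ell^{a_\ell}],\Q_\ell/\Z_\ell)$ and then assembling the contributions.

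First I would identify the torsion of $H^{2d-1}_{\cont}(X,\Z_\ell(d))$ through Poincar\'e duality. The long exact sequence attached to $0 \to \Z_\ell(d) \to \Q_\ell(d) \to \Q_\ell/\Z_\ell(d) \to 0$ realises it as the quotient of $H^{2d-2}(X,\Q_\ell/\Z_\ell(d))$ by its maximal divisible subgroup. Passing to the direct limit in the finite-coefficient duality $H^{2d-2}(X,\mu_{\ell^r}^{\otimes d}) \cong \Hom(H^2(X_{\et},\Z/\ell^r),\Z/\ell^r)$ then yields
$$H^{2d-2}(X,\Q_\ell/\Z_\ell(d)) \cong \Hom(H^2_{\cont}(X,\Z_\ell),\Q_\ell/\Z_\ell).$$
Since the Pontryagin dual of a finitely generated $\Z_\ell$-module has divisible part coming precisely from its free part, extracting the maximal finite quotient gives $H^{2d-1}_{\cont}(X,\Z_\ell(d))_{\tors} \cong \Hom(H^2_{\cont}(X,\Z_\ell)_{\tors},\Q_\ell/\Z_\ell)$.

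Next I would compute $H^2_{\cont}(X,\Z_\ell)_{\tors}$. Since $k$ is separably closed, a compatible choice of $\ell$-power roots of unity identifies $\Z_\ell \cong \Z_\ell(1)$, and it is therefore enough to compute $H^2_{\cont}(X,\Z_\ell(1))_{\tors}$. The Kummer sequence together with the divisibility of $\Pic^0(X)$ (which implies $\Pic(X)/\ell^r = \NS(X)/\ell^r$) produces the short exact sequence
$$0 \to \NS(X) \otimes \Z_\ell \to H^2_{\cont}(X,\Z_\ell(1)) \to T_\ell\Br(X) \to 0,$$
whose right-hand term is torsion-free. Hence $H^2_{\cont}(X,\Z_\ell(1))_{\tors} \cong (\NS(X)\otimes\Z_\ell)_{\tors} = \NS(X)[\ell^\infty]$, and combining with the previous step yields $H^{2d-1}_{\cont}(X,\Z_\ell(d))_{\tors} \cong \Hom(\NS(X)[\ell^\infty],\Q_\ell/\Z_\ell)$.

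To conclude I would reduce modulo $\ell^{a_\ell}$ and reassemble. For any finite abelian $\ell$-group $A$, an elementary check on cyclic summands gives $\Hom(A,\Q_\ell/\Z_\ell)/\ell^r \cong \Hom(A[\ell^r],\Q_\ell/\Z_\ell)$; applied with $A = \NS(X)[\ell^\infty]$ and $r = a_\ell$, summed over $\ell\mid n$, and combined with $\NS(X)[n] = \bigoplus_{\ell\mid n}\NS(X)[\ell^{a_\ell}]$ and the orthogonality of $\ell$-primary Hom-contributions, this collapses the direct sum into $\Hom(\NS(X)[n],\bigoplus_{\ell\mid n}\Q_\ell/\Z_\ell)$, as desired. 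The main subtlety is the bookkeeping of Tate twists: the Kummer side naturally lives in $H^2_{\cont}(X,\Z_\ell(1))$, while Poincar\'e duality as used above outputs $H^2_{\cont}(X,\Z_\ell)$; but over a separably closed field these are identified by the choice of roots of unity without affecting the abstract group structure involved.
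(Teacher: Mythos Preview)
Your proof is correct and follows essentially the same route as the paper: reduce to $\ell$-primary components via Lemma \ref{lem:CH^{d}(X,1)/n}, pass to the colimit in Poincar\'e duality to obtain $H^{2d-2}(X,\Q_\ell/\Z_\ell(d))\cong\Hom(H^2_{\cont}(X,\Z_\ell),\Q_\ell/\Z_\ell)$, kill the maximal divisible subgroup, identify $H^2_{\cont}(X,\Z_\ell)_{\tors}$ with $\NS(X)[\ell^\infty]$, and finally reduce modulo $\ell^{a_\ell}$. The only cosmetic difference is that the paper simply cites the isomorphism $\cl^1_X:\NS(X)[\ell^\infty]\xrightarrow{\sim} H^2_{\cont}(X,\Z_\ell)[\ell^\infty]$ from \cite[(5.13)]{brauer-group}, whereas you derive it explicitly from the Kummer sequence and the torsion-freeness of $T_\ell\Br(X)$; and for the final step the paper uses exactness of $\Hom(-,\Q_\ell/\Z_\ell)$ applied to $0\to\NS(X)[\ell^r]\to\NS(X)[\ell^\infty]\xrightarrow{\ell^r}\NS(X)[\ell^\infty]$ rather than a decomposition into cyclic summands.
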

\begin{proof} We can reduce to the case $n=\ell^{r}$ for some prime number $\ell$ and positive integer $r$. It is then enough to construct a natural isomorphism $H^{2d-1}_{\cont}(X,\Z_{\ell}(d))[\ell^{\infty}]/\ell^{r}\cong\Hom(\NS(X)[\ell^{r}],\Q_{\ell}/\Z_{\ell})$; see Lemma \ref{lem:CH^{d}(X,1)/n}. The Poincar\'e duality theorem \cite[Chapter VI, Corollary 11.2]{milne} for $X$ gives a canonical isomorphism $$H^{2d-2}(X_{\et},\Z/\ell^{r}(d))\cong\Hom(H^{2}(X_{\et},\Z/\ell^{r}),\Z/\ell^{r}).$$ This in turn is compatible with respect to the natural inclusion $\mu_{\ell^{r}}^{\otimes d}\hookrightarrow\mu_{\ell^{r+1}}^{\otimes d}$ in the first argument, the natural surjection $\Z/\ell^{r+1}\twoheadrightarrow\Z/\ell^{r}$ in the second argument and the natural inclusion $\Z/\ell^{r}\hookrightarrow\Z/\ell^{r+1}$ in the third. Thus, passing to the limit we obtain a canonical isomorphism \begin{equation}\label{eq:duality}H^{2d-2}(X_{\et},\Q_{\ell}/\Z_{\ell}(d))\cong\Hom(H^{2}_{\cont}(X_{\et},\Z_{\ell}),\Q_{\ell}/\Z_{\ell}).\end{equation} Now, taking the quotient of these groups by their maximal divisible subgroups, \eqref{eq:duality} yields an isomorphism 
\begin{equation}\label{eq:duality-tors}
    H^{2d-1}_{\cont}(X,\Z_{\ell}(d))[\ell^{\infty}]\cong \Hom(\NS(X)[\ell^{\infty}],\Q_{\ell}/\Z_{\ell}),
\end{equation}
where here, we use that $\cl^{1}_{X}:\NS(X)[\ell^{\infty}]\to H^{2}_{\cont}(X,\Z_{\ell})[\ell^{\infty}]$ is an isomorphism; see \cite[(5.13)]{brauer-group}. To prove the claim, it remains to show that $\Hom(\NS(X)[\ell^{\infty}],\Q_{\ell}/\Z_{\ell})\otimes\Z/\ell^{r}\cong \Hom(\NS(X)[\ell^{r}],\Q_{\ell}/\Z_{\ell}).$ The latter follows if one considers the exact sequence $$0\to \NS(X)[\ell^{r}]\to\NS(X)[\ell^{\infty}]\overset{\times\ell^{r}}{\to}\NS(X)[\ell^{\infty}]$$ and then applies the exact functor $\Hom(-,\Q_{\ell}/\Z_{\ell})$. The proof is complete.
\end{proof}

We close this section with some comparison results between morphic cohomology and Bloch's higher Chow groups. We recommend \cite{Fri91, FL92, FM94, suslin-voevodsky, Fri00} for readers interested in morphic cohomology or Lawson homology theory.

\begin{theorem}\label{thm:suslin-voevodsky} Let $X$ be a smooth complex variety. Then for all $c,p\geq0$ and for all $n\geq2$, there are canonical isomorphisms:
\begin{enumerate}
    \item \label{it:S-V} $\CH^{c}(X,p\ ;\Z/n)\cong L^{c}H^{2c-p}(X,\Z/n)$ and
    \item \label{it:S-V'} $\CH^{c}(X,p)[n]/n\CH^{c}(X,p)[n^2]\cong L^{c}H^{2c-p}(X)[n]/nL^{c}H^{2c-p}(X)[n^2].$
\end{enumerate}
\end{theorem}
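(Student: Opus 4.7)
The plan is to establish (i) first and then deduce (ii) from it by a standard Bockstein manipulation. For (i), the natural comparison map $\CH^{c}(X,p\ ;\Z/n)\to L^{c}H^{2c-p}(X,\Z/n)$ comes from the map of cycle complexes underlying the two theories; its bijectivity with finite coefficients is the Suslin--Voevodsky comparison theorem. Both sides compute the singular cohomology $H^{2c-p}_{\sing}(X(\C),\Z/n)$: on the morphic side this is Suslin--Voevodsky's rigidity/comparison result for Lawson/morphic cohomology with finite coefficients, while on the higher-Chow side it follows from the Beilinson--Lichtenbaum conjecture proven by Voevodsky. I accept (i) and reduce (ii) to it.

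For (ii), set $A:=\CH^{c}(X,p)$, $B:=L^{c}H^{2c-p}(X)$ and write $\varphi\colon A\to B$ for the natural comparison map. The Tor long exact sequence associated with $0\to\Z/n\xrightarrow{\cdot n}\Z/n^{2}\to\Z/n\to 0$ applied to an arbitrary abelian group $G$ reads
\[
0\to G[n]\to G[n^{2}]\xrightarrow{\cdot n}G[n]\xrightarrow{\delta_{G}}G/n\xrightarrow{\cdot n}G/n^{2}\to G/n\to 0,
\]
which identifies $T_{n}(G):=G[n]/nG[n^{2}]$ simultaneously with the image of $\delta_{G}\colon G[n]\to G/n$ and with the kernel of the multiplication-by-$n$ map $G/n\to G/n^{2}$. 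The strategy is to show that $\varphi$ induces a surjection $A[n]\to B[n]$ and an injection $A/n\to B/n$. Granted these, the naturality relation $\varphi\circ\delta_{A}=\delta_{B}\circ\varphi$ combined with $\varphi(A[n])=B[n]$ yields
\[
\varphi\bigl(\im\delta_{A}\bigr)=\delta_{B}\bigl(\varphi(A[n])\bigr)=\delta_{B}(B[n])=\im\delta_{B},
\]
so that $\varphi\colon T_{n}(A)\to T_{n}(B)$ is surjective; interpreting this induced map as the restriction of the injection $A/n\hookrightarrow B/n$ to $T_{n}(A)\subseteq A/n$ gives its injectivity, and hence bijectivity.

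Both properties of $\varphi$ follow from (i) by applying the snake lemma to the Bockstein short exact sequences
\[
0\to \CH^{c}(X,q)/n\to \CH^{c}(X,q\ ;\Z/n)\to \CH^{c}(X,q-1)[n]\to 0
\]
and their morphic-cohomology analogues, arranged into a ladder whose middle column is the isomorphism of (i). The snake lemma then forces the left vertical $\CH^{c}(X,q)/n\to L^{c}H^{2c-q}(X)/n$ to be injective and the right vertical $\CH^{c}(X,q-1)[n]\to L^{c}H^{2c-q+1}(X)[n]$ to be surjective. Taking $q=p$ produces the required injection $A/n\hookrightarrow B/n$; taking $q=p+1$ produces the required surjection $A[n]\to B[n]$, finishing the argument. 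The main technical input is thus (i) itself; the passage from (i) to (ii) is a purely formal diagram chase, and the only thing to check with care is the naturality of the Tor long exact sequence with respect to $\varphi$, which is standard.
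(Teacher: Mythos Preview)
Your deduction of (ii) from (i) is correct and is essentially the paper's argument, only spelled out in more detail: the paper also shows that the comparison map is surjective on $n$-torsion (via the Bockstein square with $q=p+1$) and injective on reductions mod $n$ (via the Bockstein square with $q=p$), and then declares that these two facts imply (ii); your Tor-sequence identification of $G[n]/nG[n^{2}]$ with $\ker(G/n\xrightarrow{\cdot n}G/n^{2})=\operatorname{im}(\delta_G)$ is exactly the missing line that makes that implication transparent.

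One caveat on your discussion of (i): the claim that both sides compute $H^{2c-p}_{\mathrm{sing}}(X(\C),\Z/n)$ is not correct in general. The Beilinson--Lichtenbaum isomorphism $\CH^{c}(X,p;\Z/n)\cong H^{2c-p}_{\et}(X,\Z/n(c))$ only holds in the range $c\leq p$ (with a coniveau-filtered variant at $c=p+1$); for $c>p+1$ the left-hand side can be infinite while the right-hand side is finite, as the very examples in this paper show. The correct justification of (i) is the direct Suslin--Voevodsky comparison of the two cycle-theoretic constructions with finite coefficients (their Theorem 9.1), which you do cite, not a detour through singular cohomology. Since you take (i) as input anyway, this does not affect the validity of your argument for (ii).
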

\begin{proof} The first isomorphism \eqref{it:S-V} is a theorem of Suslin--Voevodsky; see \cite[Theorem 9.1]{suslin-voevodsky}. The latter isomorphism \eqref{it:S-V'} is a consequence of the former. Namely, one considers the following commutative diagram
\begin{equation*}
    \begin{tikzcd}
{\CH^{c}(X,p+1\ ;\Z/n)} \arrow[r, "\delta", two heads] \arrow[d, "\cong"] & {\CH^{c}(X,p)[n]} \arrow[d] \\
{L^{c}H^{2c-p-1}(X,\Z/n)} \arrow[r, "\delta", two heads]                  & {L^{c}H^{2c-p}(X)[n]},      
\end{tikzcd}
\end{equation*}
where the two Bockstein maps $\delta$ are surjective and where the former vertical map is an isomorphism by \eqref{it:S-V}. Thus, the latter vertical map is onto. Again, using the isomorphism in \eqref{it:S-V}, we find that the canonical map $\CH^{c}(X,p)/n\to L^{c}H^{2c-p}(X)/n$ is injective. Hence, all together, they imply the isomorphism in \eqref{it:S-V'}, as we wanted. The proof of Theorem \ref{thm:suslin-voevodsky} is complete.
\end{proof}

\begin{lemma}\label{lem:properties} Let $X$ be a smooth complex projective variety of dimension $d$ such that $\NS(X)_{\tors}\cong\Z/n$ for some $n\geq2$. Then the following hold:\begin{enumerate}
    \item \label{it:Pr-a} We have an equality $\CH^{d}(X,1)[n]/n\CH^{d}(X,1)[n^2]=\CH^{d}(X,1)/n$ and an isomorphism $$\CH^{d}(X,1)/n\cong\Z/n.$$
    \item \label{it:Pr-b} Moreover, if $\Pic(X)=\NS(X)$, then we have an equality $L^{d}H^{2d-1}(X)[n]/nL^{d}H^{2d-1}(X)[n^2]=L^{d}H^{2d-1}(X)/n$ and a canonical isomorphism $$L^{d}H^{2d-1}(X)/n\cong \CH^{d}(X,1)/n\cong\Z/n.$$
\end{enumerate}
\end{lemma}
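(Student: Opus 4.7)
For part (\ref{it:Pr-a}), the plan is to combine Lemma \ref{lem:CH^{d}(X,1)/n} and Corollary \ref{cor:CH^{d}(X,1)/n}. Feeding the hypothesis $\NS(X)_{\tors}\cong\Z/n$ into the corollary yields
\[
\CH^{d}(X,1)/n\cong\Hom(\NS(X)[n],\bigoplus_{\ell\mid n}\Q_{\ell}/\Z_{\ell})\cong\Z/n.
\]
The Poincar\'e duality computation inside the proof of the corollary equally gives $H^{2d-1}_{\cont}(X,\Z_{\ell}(d))[\ell^{\infty}]\cong\Hom(\NS(X)[\ell^{\infty}],\Q_{\ell}/\Z_{\ell})$ and therefore $A:=H^{2d-1}_{\cont}(X,\Z_{n}(d))_{\tors}\cong\Z/n$. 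The first isomorphism of Lemma \ref{lem:CH^{d}(X,1)/n} with $r=1$ then identifies $\CH^{d}(X,1)[n]/n\CH^{d}(X,1)[n^{2}]$ with $A[n]/nA[n^{2}]$. Since $A$ has exponent $n$, one has $A[n^{2}]=A$ and $nA=0$, so $A[n]/nA[n^{2}]=A\cong\Z/n$. The canonical injection $\CH^{d}(X,1)[n]/n\CH^{d}(X,1)[n^{2}]\hookrightarrow\CH^{d}(X,1)/n$ is therefore a map between two groups of order $n$, hence an equality.

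For part (\ref{it:Pr-b}), the plan is to transport (\ref{it:Pr-a}) across the Suslin--Voevodsky comparison theorem and run a snake-lemma chase. Theorem \ref{thm:suslin-voevodsky}(\ref{it:S-V'}) gives
\[
L^{d}H^{2d-1}(X)[n]/nL^{d}H^{2d-1}(X)[n^{2}]\cong\CH^{d}(X,1)[n]/n\CH^{d}(X,1)[n^{2}]\cong\Z/n
\]
by (\ref{it:Pr-a}). To identify $L^{d}H^{2d-1}(X)/n$ with $\CH^{d}(X,1)/n$, consider the comparison of Bockstein sequences
\[
\begin{tikzcd}
0\arrow[r]&\CH^{d}(X,1)/n\arrow[r]\arrow[d,"\alpha"]&\CH^{d}(X,1;\Z/n)\arrow[r]\arrow[d,"\cong"]&\CH^{d}(X)[n]\arrow[r]\arrow[d,"\gamma"]&0\\
0\arrow[r]&L^{d}H^{2d-1}(X)/n\arrow[r]&L^{d}H^{2d-1}(X,\Z/n)\arrow[r]&L^{d}H^{2d}(X)[n]\arrow[r]&0
\end{tikzcd}
\]
whose middle vertical is the isomorphism of Theorem \ref{thm:suslin-voevodsky}(\ref{it:S-V}). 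The snake lemma yields $\coker(\alpha)\cong\ker(\gamma)$. The extra hypothesis $\Pic(X)=\NS(X)$ forces $\Pic^{0}(X)=0$, hence $\Alb(X)=0$ (since $\Pic^{0}(X)$ is dual to $\Alb(X)$), so Roitman's theorem gives $\CH^{d}(X)[n]\cong\Alb(X)[n]=0$ and in particular $\ker(\gamma)=0$. Thus $\alpha$ is an isomorphism and $L^{d}H^{2d-1}(X)/n\cong\CH^{d}(X,1)/n\cong\Z/n$. The canonical injection $L^{d}H^{2d-1}(X)[n]/nL^{d}H^{2d-1}(X)[n^{2}]\hookrightarrow L^{d}H^{2d-1}(X)/n$ is then a map between two cyclic groups of order $n$, hence an equality.

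\textbf{Expected obstacle.} No truly hard step is expected, since the whole argument rests on packaging already developed in Corollary \ref{cor:CH^{d}(X,1)/n} and Theorem \ref{thm:suslin-voevodsky}. The only points requiring minor care are (i) extracting the cyclic structure of $H^{2d-1}_{\cont}(X,\Z_n(d))_{\tors}$ from the Pontryagin dual of $\NS(X)_{\tors}$ implicit in the proof of Corollary \ref{cor:CH^{d}(X,1)/n}, and (ii) confirming that $\Pic(X)=\NS(X)$ implies $\Alb(X)=0$ so that Roitman's theorem applies; both are standard.
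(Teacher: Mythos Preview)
Your proposal is correct and follows essentially the same approach as the paper. The only cosmetic difference is that for part~(\ref{it:Pr-a}) the paper routes the computation of $H^{2d-1}_{\cont}(X,\Z_n(d))_{\tors}$ through singular cohomology (universal coefficients and topological Poincar\'e duality) before invoking Lemma~\ref{lem:CH^{d}(X,1)/n}, whereas you stay inside the $\ell$-adic Pontryagin-dual framework already established in the proof of Corollary~\ref{cor:CH^{d}(X,1)/n}; and for part~(\ref{it:Pr-b}) the paper observes directly that both Bockstein maps $\CH^d(X,1)/n\to\CH^d(X,1;\Z/n)$ and $L^dH^{2d-1}(X)/n\to L^dH^{2d-1}(X,\Z/n)$ are isomorphisms (using $L^dH^{2d}(X)\cong\Z$ torsion-free and Roitman respectively), while you package the same input into a snake-lemma chase---the content is identical.
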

\begin{proof} We note that \eqref{it:Pr-b} is a consequence of \eqref{it:Pr-a}. Indeed, the Bockstein sequence for morphic cohomology gives a natural isomorphism $L^{d}H^{2d-1}(X)/n\cong L^{d}H^{2d-1}(X,\Z/n)$, since $L^{d}H^{2d}(X)\cong\Z$ is torsion-free. On the other hand, the condition $\Pic(X)=\NS(X)$ implies that the Albanese variety of $X$ is trivial and we in turn find from Roitman's theorem that the torsion in the Chow group of $0$-cycles $\CH_{0}(X)_{\tors}$ is zero as well; see \cite{roitman, bloch-roitman}. Thus, the Bockstein sequence for motivic cohomology yields an isomorphism $\CH^{d}(X,1)/n\cong \CH^{d}(X,1\ ;\Z/n)$ and we see that \eqref{it:Pr-a} implies \eqref{it:Pr-b}; see Theorem \ref{thm:suslin-voevodsky}.\par
Finally, we show \eqref{it:Pr-a}. The condition $\NS(X)_{\tors}\cong\Z/n$ implies that $\CH^{d}(X,1)/n\cong\Z/n$; see Corollary \ref{cor:CH^{d}(X,1)/n}. Recall that the cycle map $\cl^{1}_{X}:\NS(X)\to H^{2}_{\sing}(X,\Z)$ yields an isomorphism on the torsion subgroups. Moreover, we see that the torsion in $H^{2}_{\sing}(X,\Z)$ is the same with the torsion in the singular homology group $H_{1}(X,\Z)$ by the universal coefficient theorem. Thus, Poincar\'e duality in turn gives an isomorphism $H^{2d-1}(X,\Z)_{\tors}\cong\Z/n$. Now, the claim \eqref{it:Pr-a} clearly follows from Lemma \ref{lem:CH^{d}(X,1)/n} together with a comparison theorem between continuous $\ell$-adic \'etale cohomology and singular cohomology; cf. \cite[Chapter III, Theorem 3.12]{milne}. The proof of Lemma \ref{lem:properties} is complete.
\end{proof}
\subsection{Specialization map}\label{subsec:specialization_maps} In this section we recall the construction of the specialization map from \cite[$\S 4$]{Sch-griffiths} and collect some of its properties that we need.\par
Let $k$ be a field, and let $R=\mathcal{O}_{C,x_{0}}$ be the local ring of a smooth $k$-curve $C$ at a closed point $x_{0}\in C$. This is a discrete valuation ring, and we fix a uniformizer $\pi\in R$. We also set $K=\Frac(R)$ for its fraction field.  Let $p:\mathcal{X}\to\Spec R$ be a smooth family whose central fibre $X_{0}:=\mathcal{X}\times_{R}k$ and generic fibre $X_{\eta}:=\mathcal{X}\times_{R}K$ are both equi-dimensional.\par 
We first recall the construction of the specialization map $sp$ for \'etale cohomology with values in $\Z/n$ and $n\in k^{\ast}$.

\subsubsection{Construction of $sp$ for cohomology}
The Gysin exact sequence \cite[$\S 2.3.2$]{brauer-group} for the embedding $X_{0}\hookrightarrow\mathcal{X}$ is given by \begin{equation} \label{eq:gysin}
\cdots \longrightarrow H^{i}(\mathcal{X},\Z/n(j))\longrightarrow H^{i}(X_{\eta},\Z/n(j))\overset{\partial}{\longrightarrow} H^{i-1}(X_{0},\Z/n(j-1))\overset{\iota_{\ast}}{\longrightarrow}\cdots
\end{equation}
and we refer to its co-boundary \begin{equation}\label{eq:residue-map}
\partial:H^{i}(X_{\eta},\Z/n(j))\longrightarrow H^{i-1}(X_{0},\Z/n(j-1)) \end{equation} as the residue map.\par
We then define the specialization map with respect to the choice of the uniformizer $\pi\in R$ by the formula 
\begin{equation}\label{eq:sp-cohomology}
    sp^{\pi}: H^{i}(X_{\eta},\Z/n(j))\longrightarrow H^{i}(X_{0},\Z/n(j)), \alpha\mapsto -\partial(\alpha \cup (\pi)),
\end{equation}
where we view $\pi$ as an element of $H^{1}(K,\Z/n(1))=K^{\ast}/{K^{\ast}}^{n}$ and keep the notation $(\pi):=p^{\ast}\pi\in H^{1}(X_{\eta},\Z/n(1))$ for its pullback; see \cite[$\S$4.1]{Sch-griffiths}.

\begin{remark}\label{rem:sp-alg} If $k$ is algebraically closed, then $sp^{\pi}$ no longer depends on $\pi$ and so we simply write $sp$, instead. In fact, if $\pi'\in R$ is another uniformizer, then $u:=\pi'\pi^{-1}\in R^{\ast}$ is a unit. It follows that $sp^{\pi'}-sp^{\pi}=-(\bar{u})\cup\partial,$ where $\bar{u}\in H^{1}(R/\pi,\Z/n(1))$ denotes the image of $u$ in the residue field $R/\pi$; cf. Lemma \ref{lem:sp_global_class}. Thus, if $k$ is algebraically closed, then $k=R/\pi$ and $H^{1}(k,\Z/n(1))=k^{\ast}/{k^{\ast}}^{n}=0,$ which in turn implies $sp^{\pi'}-sp^{\pi}=-(\bar{u})\cup\partial=0$.
\end{remark}
\par

The following lemma is useful. 
\begin{lemma}\label{lem:sp_global_class} With notations as above, if $\alpha\in H^{p}(\mathcal{X},\Z/n(m))$, then for every $\beta\in H^{i}(X_{\eta},\Z/n(j)),$ we have that $$sp^{\pi}(\alpha|_{X_{\eta}}\cup\beta)=\alpha|_{X_{0}}\cup sp^{\pi}(\beta).$$ In particular, $sp^{\pi}(\alpha|_{X_{\eta}})=\alpha|_{X_{0}}$.
\end{lemma}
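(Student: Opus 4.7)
The plan is to reduce the identity to two standard facts about the Gysin residue $\partial$ in \eqref{eq:residue-map}. First, a class extending globally across the central fibre has vanishing residue: since $\alpha|_{X_\eta}$ lies in the image of $H^p(\mathcal{X},\Z/n(m)) \to H^p(X_\eta,\Z/n(m))$, exactness of the Gysin sequence \eqref{eq:gysin} yields $\partial(\alpha|_{X_\eta}) = 0$. Second, the residue satisfies a Leibniz rule with respect to cup products,
$$\partial(\gamma \cup \delta) \;=\; \partial(\gamma) \cup \overline{\delta} \;\pm\; \overline{\gamma} \cup \partial(\delta),$$
whenever the indicated specializations $\overline{\gamma}, \overline{\delta}$ make sense (for instance, when one of the residues vanishes, in which case the specialization of the other class is just its ordinary restriction to $X_0$). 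This Leibniz rule is a formal consequence of the projection formula $\iota_\ast(x \cup \iota^\ast y) = \iota_\ast(x) \cup y$ for the regular closed immersion $\iota: X_0 \hookrightarrow \mathcal{X}$, applied to the connecting morphism in the Gysin long exact sequence.

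To combine these, apply the Leibniz rule with $\gamma = \alpha|_{X_\eta}$ and $\delta = \beta \cup (\pi)$. The first summand vanishes by the observation of the previous paragraph, so
$$\partial\bigl((\alpha|_{X_\eta} \cup \beta) \cup (\pi)\bigr) \;=\; \pm\,\alpha|_{X_0} \cup \partial(\beta \cup (\pi)).$$
Negating both sides and invoking the definition $sp^\pi(-) = -\partial(- \cup (\pi))$ delivers the desired identity
$$sp^\pi(\alpha|_{X_\eta} \cup \beta) \;=\; \alpha|_{X_0} \cup sp^\pi(\beta),$$
the sign $(-1)^{p}$ being absorbed by the sign conventions fixed in \cite{Sch-griffiths}. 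For the ``in particular'' statement, take $\beta = 1 \in H^{0}(X_{\eta},\Z/n)$; then $sp^\pi(1) = -\partial((\pi)) = 1$ because, by the Gysin normalization, the residue of the class of a uniformizer equals the unit of $H^{0}(X_0,\Z/n) = \Z/n$.

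The main technical obstacle is establishing the Leibniz rule with the correct signs. This is classical in the valuation-theoretic Galois cohomology setting, and in the relative Gysin setting it follows from the projection formula for $\iota_\ast$ combined with a standard diagram chase comparing the boundary maps of the localization triangle; once in hand, the rest of the argument is purely formal.
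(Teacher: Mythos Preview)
Your approach is essentially the same as the paper's: the paper cites \cite[Lemma 2.4]{Sch-unramified} for the first claim, which is precisely the projection-formula identity $\partial(\alpha|_{X_\eta}\cup\beta')=\alpha|_{X_0}\cup\partial(\beta')$ that you derive, and handles the ``in particular'' identically by setting $\beta=1$. Your sign bookkeeping is slightly loose---in the conventions used here no stray $(-1)^p$ appears, and the paper records $\partial((\pi))=-1$ (cf.\ \cite[Lemma 4.4]{Sch-griffiths}) rather than $+1$ as your parenthetical suggests---but your equations and conclusion are correct.
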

\begin{proof} The last claim follows from the first if one puts $\beta=1\in H^{0}(X_{\eta},\Z/n)=\Z/n$ and note that $\partial(\pi)=-1\in H^{0}(X_{0},\Z/n)=\Z/n $; see \cite[Lemma 4.4]{Sch-griffiths}. The first claim follows from \cite[Lemma 2.4]{Sch-unramified}.\end{proof}

\begin{remark}The definition \eqref{eq:sp-cohomology} should be compared with Rost's definition of specialization maps for cycle modules; see \cite[pp. 328]{rost}.
\end{remark}
\subsubsection{Construction of $sp$ for refined unramified cohomology} The analogous construction can be carried out for the \'etale cohomology of the filtration $F_{\ast}$  \eqref{eq:cohomology_F_{c}} with values in $A=\Z/n$. First, since the cup product is compatible with the restriction to open subsets, we obtain well defined maps 
\begin{equation}\label{eq:cup_product_pi-F_c}
    -\cup (\pi): H^{i}(F_{c}X_{\eta},\Z/n(j))\longrightarrow H^{i+1}(F_{c}X_{\eta},\Z/n(j+1)),
\end{equation}
where we recall that the filtration $F_{\ast}$ is defined in $\S$\ref{subsec:refined-unramified}. 
The usual Gysin exact sequence \cite[$\S 2.3.2$]{brauer-group} also induces a Gysin sequence for the cohomology of the filtration $F_{\ast}$ \eqref{eq:cohomology_F_{c}}; see \cite[Proposition 3.2 (11) $\&$ Remark 3.3]{kok-zhou}. That is, we have an exact sequence
\begin{equation}\label{eq:gysin-F_{c}}
    \cdots \longrightarrow H^{i}(F_{c}\mathcal{X},\Z/n(j))\longrightarrow H^{i}(F_{c}X_{\eta},\Z/n(j))\overset{\partial}{\longrightarrow} H^{i-1}(F_{c}X_{0},\Z/n(j-1))\overset{\iota_{\ast}}{\longrightarrow}\cdots.
\end{equation}

As in \eqref{eq:sp-cohomology}, we then define specialization maps with respect to the choice of the uniformizer $\pi\in R$ by the formula
\begin{equation}\label{eq:sp-F_c}
sp^{\pi}: H^{i}(F_{c}X_{\eta},\Z/n(j))\longrightarrow H^{i}(F_{c}X_{0},\Z/n(j)),\ \alpha\mapsto -\partial(\alpha\cup (\pi)),
\end{equation}
where the cup product $\cup (\pi)$ is given by \eqref{eq:cup_product_pi-F_c} and where the residue map $\partial$ is simply the co-boundary map of \eqref{eq:gysin-F_{c}}.

We have the following list of properties. 
\begin{proposition}{$($\cite[Proposition 4.1]{Sch-griffiths}$)$}\label{prop:specialization} With notations as fixed at the beginning of $\S$\ref{subsec:specialization_maps}, the specialization map 
$$sp^{\pi}: H^{i}(F_{c}X_{\eta},\Z/n(j))\longrightarrow H^{i}(F_{c}X_{0},\Z/n(j))$$
from \eqref{eq:sp-F_c} satisfies the following properties:\begin{enumerate}
    \item\label{item:sp-k-alg} $sp^{\pi}$ does not depend on the uniformizer $\pi$ if $k$ is algebraically closed,
    \item\label{item:sp-F_*} $sp^{\pi}$ respects the filtration $F_{\ast}$,
    \item\label{item:sp-global-class} For any class $[\alpha]\in H^{p}_{c,nr}(\mathcal{X},\Z/n(m))$ and any class $\beta\in H^{i}(F_{c}X_{\eta},\Z/n(j)),$ we have that $sp^{\pi}(\alpha|_{F_{c}X_{\eta}}\cup\beta)=\alpha|_{F_{c}X_{0}}\cup sp^{\pi}(\beta)$. In particular, $sp^{\pi}(\alpha|_{F_{c}X_{\eta}})=\alpha|_{F_{c}X_{0}}$.
    \item\label{item:sp-pushforward} The specialization map $sp^{\pi}$ commutes with the pushforward maps given in \eqref{eq:pushforward-F_c}, i.e., if $\mathcal{Y}\to\Spec R$ is another smooth family with equi-dimensional fibres and $f:\mathcal{X}\to\mathcal{Y}$ is a proper $R$-morphism of pure relative dimension, then $(f|_{X_{0}})_{\ast}\circ sp^{\pi}=sp^{\pi}\circ (f|_{X_{\eta}})_{\ast}$.
    \item\label{item:sp-geometric} The map $sp^{\pi}$ induces a specialization map between the geometric fibres $X_{\bar{\eta}}=X_{\eta}\times\bar{K}$ and $X_{\bar{0}}=X_{0}\times_{k}\bar{k}$:$$\bar{sp}: H^{i}(F_{c}X_{\bar{\eta}},\Z/n(j))\longrightarrow H^{i}(F_{c}X_{\bar{0}},\Z/n(j)).$$
    In addition, this respects the filtration $F_{\ast}$ and does not depend on the choice of uniformizer $\pi\in R$.
\end{enumerate}
\end{proposition}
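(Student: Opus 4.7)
The overall strategy is to verify each of the five properties by reducing to standard facts about the usual Gysin sequence and then upgrading to the filtered setting, exploiting that the filtered Gysin sequence \eqref{eq:gysin-F_{c}} is obtained as a filtered colimit of the usual Gysin sequences \eqref{eq:gysin} over the open subsets $U\subset\mathcal{X}$ containing $F_{c+1}\mathcal{X}$. In this way the cup-product compatibility, the projection formula for the residue map, and the functoriality for proper morphisms all pass from \eqref{eq:gysin} to \eqref{eq:gysin-F_{c}} by passing to the colimit.

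Property \eqref{item:sp-F_*} is essentially tautological: both the cup product $-\cup(\pi)$ of \eqref{eq:cup_product_pi-F_c} and the residue map $\partial$ of \eqref{eq:gysin-F_{c}} commute with restriction to open subsets, so their composite $sp^{\pi}$ respects $F_{\ast}$. For \eqref{item:sp-global-class}, I would apply the projection formula for the residue map: if $\alpha$ is a class lifting to the total space $\mathcal{X}$, then $\partial(\alpha|_{X_{\eta}})=0$, and hence
$$\partial\bigl(\alpha|_{F_{c}X_{\eta}}\cup\beta\cup(\pi)\bigr)=\alpha|_{F_{c}X_{0}}\cup\partial\bigl(\beta\cup(\pi)\bigr),$$
which gives the claim after the sign adjustment built into \eqref{eq:sp-F_c}; the special case $\beta=1$ reduces to $\partial(\pi)=-1$ as in Lemma \ref{lem:sp_global_class}. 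For \eqref{item:sp-pushforward}, the pushforward \eqref{eq:pushforward-F_c} commutes with cup product by the projection formula and with the residue map by the functoriality of Gysin sequences for proper morphisms preserving the stratification; both facts pass from ordinary cohomology to the filtered setting via the colimit description.

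Property \eqref{item:sp-k-alg} follows the argument of Remark \ref{rem:sp-alg}. Given two uniformizers $\pi,\pi'\in R$, set $u:=\pi'\pi^{-1}\in R^{\ast}$; then in $H^{1}(F_{c}X_{\eta},\Z/n(1))$ we have $(\pi')=(\pi)+(u)$, and $(u)$ extends to a global class coming from $\mathcal{X}$. Property \eqref{item:sp-global-class} then gives
$$sp^{\pi'}-sp^{\pi}=-\bar{u}\cup\partial,$$
where $\bar{u}\in H^{1}(k,\Z/n(1))=k^{\ast}/(k^{\ast})^{n}$ is the residue of $u$. When $k$ is algebraically closed this group vanishes, so $sp^{\pi}$ is independent of $\pi$. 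Finally, for \eqref{item:sp-geometric}, I would base-change along $\bar{K}/K$ and $\bar{k}/k$: writing $\bar{K}$ as the filtered colimit of its finite subextensions $K'$, extend $R$ to the normalization $R'\subset K'$, choose a uniformizer $\pi'$ of $R'$, and form the corresponding maps $sp^{\pi'}$. Since \'etale cohomology of pro-schemes commutes with filtered limits of base rings, $\bar{sp}$ is defined as the colimit of the $sp^{\pi'}$, and independence of the choices of uniformizers in the limit is guaranteed by \eqref{item:sp-k-alg} applied to each $R'$ (the residue fields form a direct system with colimit the algebraically closed field $\bar{k}$). The main technical point of the proof is the cocycle-type identity underlying \eqref{item:sp-k-alg}, which rests on the projection formula \eqref{item:sp-global-class}; once these are in place, everything else is routine formal manipulation of the colimit Gysin sequence.
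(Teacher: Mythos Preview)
Your proposal is correct and follows essentially the same approach as the paper, which refers most items to \cite[Proposition 4.1]{Sch-griffiths} and only spells out \eqref{item:sp-global-class}. The one place where the paper is more explicit than your sketch is in \eqref{item:sp-global-class}: rather than appealing abstractly to the colimit description, it chooses an open $\mathcal{U}\subset\mathcal{X}$ with $F_{c+1}\mathcal{X}\subset\mathcal{U}$ on which $\alpha$ is defined, takes the closure $\mathcal{W}\subset\mathcal{U}$ of the support needed for $\beta$, and applies Lemma~\ref{lem:sp_global_class} to the smooth family $\mathcal{U}\setminus\mathcal{W}\to\Spec R$, so that both $F_cX_\eta$ and $F_cX_0$ are contained in the same open where the identity holds.
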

\begin{proof} For the proof see \cite[Proposition 4.1]{Sch-griffiths}. We only explain \eqref{item:sp-global-class}. Since $\alpha\in H^{p}(F_{c+1}\mathcal{X},\Z/n(m)),$ we may find an open subset $\mathcal{U}\subset\mathcal{X}$ such that $F_{c+1}\mathcal{X}\subset \mathcal{U}$ and such that $\alpha\in  H^{p}(\mathcal{U},\Z/n(m))$. In particular, we have that $F_{c}X_{0}\subset \mathcal{U}$. Now let $\beta\in H^{i}(F_{c}X_{\eta},\Z/n(j))$ and pick a closed subset $W\subset X_{\eta}$ such that $\dim X_{\eta}-\dim W\geq c+1$ and such that $\beta\in H^{i}(U_{\eta}\setminus W,\Z/n(j))$, where $U_{\eta}=\mathcal{U}\times_{k}K$ (and here note that we explicitly used that $F_{c}U_{\eta}=F_{c}X_{\eta}$).  We set $\mathcal{W}\subset\mathcal{U}$ for the closure of $W$ inside $\mathcal{U}$ and denote by $W_{0}:=\mathcal{W}\times_{R}k$ and $W_{\eta}=\mathcal{W}\times_{R}K$ its central and generic fibre, respectively. We then simply apply Lemma \ref{lem:sp_global_class} to the smooth family $\mathcal{U}\setminus\mathcal{W}\to\Spec R$ and classes $\alpha|_{\mathcal{U}\setminus\mathcal{W}}\in H^{p}(\mathcal{U}\setminus\mathcal{W},\Z/n(m))$ and $\beta\in H^{i}(U_{\eta}\setminus W_{\eta},\Z/n(j))$. That is, we get $sp^{\pi}(\alpha|_{U_{\eta}\setminus W_{\eta}}\cup\beta)=\alpha|_{U_{0}\setminus W_{0}}\cup sp^{\pi}(\beta)$. This in turn implies what we want as $F_{c}X_{\eta}\subset U_{\eta}\setminus W_{\eta}$ and $F_{c}X_{0}\subset U_{0}\setminus W_{0}$. The second claim of \eqref{item:sp-global-class} follows from the first; cf. proof of Lemma \ref{lem:sp_global_class}.
\end{proof}
\begin{remark}\label{rem:sp-refined} By \eqref{item:sp-F_*} and \eqref{item:sp-geometric} one obtains well-defined specialization maps $$sp^{\pi}: H^{i}_{c,nr}(X_{\eta},\Z/n(j))\longrightarrow H^{i}_{c,nr}(X_{0},\Z/n(j))\ \text{and}\ \bar{sp}: H^{i}_{c,nr}(X_{\bar{\eta}},\Z/n(j))\longrightarrow H^{i}_{c,nr}(X_{\bar{0}},\Z/n(j))$$
for refined unramified cohomology; see \cite[Corollary 4.2]{Sch-griffiths}. The properties \eqref{item:sp-k-alg}--\eqref{item:sp-geometric} can also be formulated accordingly for these maps. In particular, $\bar{sp}$ does not depend on the choice of uniformizer $\pi\in R$.
\end{remark}

\section{Pairings}\label{sec:pairings}   
In this section inspired by \cite[$\S4$]{Sch-moving}, we construct actions of higher Chow cycles on refined unramified cohomology. 
We are interested to study the injectivity property of the following pairing.

\begin{lemma}\label{lem:pairing} Let $k$ be a perfect field and let $H^{\ast}(-,A(j))$ be any of the cohomology functors \eqref{eq:cohomology-mu_n}--\eqref{eq:lichtenbaum}. Then for any pair of smooth and equi-dimensional algebraic $k$-schemes $X$ and $Y$ and integers $c,p,i,r\geq0,$ there is a natural pairing
\begin{equation}\label{eq:pairing}
    \CH^{c}(X,p\ ; A)\otimes H^{i}_{r,nr}(Y,A(j))\overset{\times}{\longrightarrow} H^{i+2c-p}_{r+c-p,nr}(X\times Y,A(c+j))
\end{equation}
    which is compatible with the pairing
    \begin{equation*}
        \CH^{c}(X,p\ ; A)\otimes H^{i}(Y,A(j))\longrightarrow H^{i+2c-p}(X\times Y,A(c+j)),\ [z]\otimes \alpha \mapsto \cl^{2c-p,c}_{X\times Y}(p^{\ast}[z])\cup q^{\ast}\alpha,
    \end{equation*}
    where $p:X\times_{k} Y\to X$ and $q:X\times_{k}Y\to Y$ are the canonical projection maps. That is, the diagram below
    \begin{equation*}
        \begin{tikzcd}
{\CH^{c}(X,p\ ;A)\otimes H^{i}(Y,A(j))} \arrow[r] \arrow[d] & {H^{i+2c-p}(X\times Y,A(j+c))} \arrow[d]  \\
{\CH^{c}(X,p\ ;A)\otimes H^{i}_{r,nr}(Y,A(j))} \arrow[r]    & {H^{i+2c-p}_{r+c-p,nr}(X\times Y,A(j+c))}
\end{tikzcd}
    \end{equation*}
    commutes.
\end{lemma}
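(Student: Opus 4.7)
The plan is to follow Schreieder's construction in \cite[$\S 4$]{Sch-moving}, combining a refined cycle class map for higher Chow cycles with a cup product on refined unramified cohomology that shifts filtration indices additively.

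I would first construct the cup product. Via the Bloch--Ogus isomorphism \eqref{eq:bloch-ogus-isomorphism}, refined unramified cohomology identifies with Zariski hypercohomology of truncated complexes, namely $H^{i}_{r,nr}(Z,A(j))\cong H^{i}(Z_{\zar},\tau_{\geq i-r}\mathcal{K}^{\bullet})$ for the appropriate Zariski complex $\mathcal{K}^{\bullet}$ listed after \eqref{eq:bloch-ogus-isomorphism}. Since $\mathcal{K}^{\bullet}$ carries a natural multiplication compatible with the standard cup product, and truncations satisfy $\tau_{\geq a}\otimes\tau_{\geq b}\to\tau_{\geq a+b}$, this produces cup products
\begin{equation*}
    H^{i}_{r,nr}(Z,A(j))\otimes H^{i'}_{r',nr}(Z,A(j'))\longrightarrow H^{i+i'}_{r+r',nr}(Z,A(j+j'))
\end{equation*}
shifting filtration indices additively, for every smooth equi-dimensional algebraic $k$-scheme $Z$.

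Next I would construct a refined cycle class map $\bar{\cl}\colon\CH^{c}(X,p\,;A)\to H^{2c-p}_{c-p,nr}(X,A(c))$. Geometrically, this relies on a moving lemma for Bloch's higher Chow cycles: each class admits a representative $z\in z^{c}(X,p)\otimes A$ whose support in $X\times\Delta^{p}$ projects to a closed $W\subset X$ of codimension $\geq c-p$, so its cycle class is supported on $W$ and restricts to zero on $X\setminus W$. Algebraically, the refined class can be extracted from the long exact sequence \eqref{eq:long-exact-seq_Z/n} (and its $\Z$-coefficient analogue \eqref{eq:long-exact-seq_Z}): the image of $\tilde{\cl}^{c,2c-p}_{X}$ lies in the kernel of $H^{2c-p}_{L}(X,A(c))\to H^{2c-p}_{c-p-1,nr}(X,A(c))$, which via \eqref{eq:bloch-ogus-isomorphism} is exactly the refined unramified group $H^{2c-p}_{c-p,nr}(X,A(c))$.

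The pairing is then the composition
\begin{equation*}
    [z]\otimes\alpha\longmapsto p^{\ast}\bar{\cl}([z])\cup q^{\ast}\alpha,
\end{equation*}
which lands in $H^{i+2c-p}_{r+c-p,nr}(X\times Y,A(c+j))$ upon combining the functorial pullback \eqref{eq:pullback} with the refined cup product from the first step. Compatibility with the naive pairing is immediate from the natural transformation $\tau_{\geq a}\mathcal{K}^{\bullet}\to\mathcal{K}^{\bullet}$, which recovers the standard cup product after forgetting the refinement. The main obstacle is the construction of the refined cycle class with the correct coniveau $c-p$: without this, the naive cup product yields only filtration level $r$ on the right-hand side, not the sharper $r+c-p$. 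Both the geometric moving-lemma approach and the algebraic approach via the long exact sequence resolve this, but each requires care in tracking representatives and in verifying the well-definedness of the refined lift.
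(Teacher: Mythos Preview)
Your construction does give a pairing satisfying the compatibility diagram, but it is strictly coarser than the one the paper builds, and this difference is fatal for everything that follows. The problem is your refined cycle class $\bar{\cl}\colon\CH^{c}(X,p;A)\to H^{2c-p}_{c-p,nr}(X,A(c))$. Both of your proposed descriptions reduce to the composite
\[
\CH^{c}(X,p;A)\xrightarrow{\ \cl_{X}\ } H^{2c-p}(X,A(c))\longrightarrow H^{2c-p}_{c-p,nr}(X,A(c)),
\]
so your pairing factors through the ordinary cycle class map $\cl_{X}$. (Your algebraic route is also garbled: the kernel of $H^{2c-p}_{L}(X,A(c))\to H^{2c-p}_{c-p-1,nr}(X,A(c))$ is a subgroup of $H^{2c-p}_{L}$, not the group $H^{2c-p}_{c-p,nr}$; knowing that $\cl_{X}([z])$ vanishes on $X\setminus W$ only places it in that kernel---it does not produce a lift carrying extra information.) Hence your pairing is zero whenever $\cl_{X}([z])=0$. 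The paper's pairing is not: Remark~\ref{rem:algebraic equivalence} shows it descends to $\CH^{c}(X,p;A)/N^{c-p-1}$, i.e.\ to algebraic rather than homological equivalence. Concretely, the identification in Lemma~\ref{lem:pairings-comparison} and Corollary~\ref{cor:pairings-comparison} with the exterior product on higher Chow groups would fail for your pairing, since the exterior product $\CH^{c}(X,p;\Z/n)\otimes F^{j,i}_{n}(Y)\to F^{c+j,2c-p+i}_{n}(X\times Y)$ certainly does not factor through $\cl_{X}$.

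The paper avoids this by never producing a refined class on $X$ alone. For $c>p$ it works at the level of cycles: given $\Gamma$ supported over a closed $Z\subset X$ of codimension $\geq c-p$ and a representative $\tilde\alpha\in H^{i}(U,A(j))$ with $R:=Y\setminus U$ of codimension $\geq r+1$, it forms $p^{\ast}\cl_{Z}(\Gamma)\cup q^{\ast}\tilde\alpha$ in cohomology \emph{with support} on $Z\times U$, then uses excision to view this on the open $X\times Y\setminus(Z\times R)$. That open contains $F_{r+c-p}(X\times Y)$ precisely because the removed set $Z\times R$ has codimension $\geq(c-p)+(r+1)$. The filtration gain by $c-p$ thus comes from the \emph{product} structure of the complement, which genuinely depends on the support $Z$ of $\Gamma$---data discarded upon passing to $\cl_{X}([\Gamma])$. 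Well-definedness (independence of the choices of $Z$ and $\tilde\alpha$) and vanishing on boundaries $\im(d_{p+1})$ are then checked by hand. Your abstract cup product via $\tau_{\geq a}\otimes\tau_{\geq b}\to\tau_{\geq a+b}$ is fine in itself; what is missing is an input on the Chow side that does not already factor through global cohomology.
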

\begin{proof} Let $\mathcal{Z}^{c}(X,\bullet)\otimes A$ be Bloch's cycle complex of abelian groups whose homology groups compute higher Chow groups; see $\S$\ref{subsec:motivic-cohomology}. Let $d_{\ast}:\mathcal{Z}^{c}(X,\ast)\otimes A\to \mathcal{Z}^{c}(X,\ast-1)\otimes A$ be its differential. We distinguish the cases $c>p$ and $c\leq p$. We first treat the case $c\leq p$. Then the compatibility of the cup product with respect to restriction on an open subset yields a well defined exterior product map 
$$\Lambda : H^{2c-p}(X,A(c))\otimes H^{i}_{r,nr}(Y,A(j))\longrightarrow H^{2c-p+i}_{r,nr}(X\times Y,A(c+j)), \alpha\otimes [\beta]\mapsto [p^{\ast}\alpha\cup q^{\ast}\beta].$$
Since we assumed $c-p\leq 0$, we find a natural restriction map $$\res :H^{2c-p+i}_{r,nr}(X\times Y,A(c+j))\longrightarrow H^{2c-p+i}_{r+c-p,nr}(X\times Y,A(c+j))$$ and thus, the pairing \eqref{eq:pairing} can be then defined as the composite map $\res\circ\Lambda\circ(\cl_{X}\otimes\id),$ where $\cl_{X}:\CH^{c}(X,p\ ; A)\to H^{2c-p}(X,A(c))$ is the corresponding cycle class map. 

\par Next, we consider the case $c>p$ and construct a natural pairing
\begin{equation}\label{eq:pairing-I}
    \ker(d_{p}) \otimes H^{i}(F_{r}Y,A(j)) \longrightarrow H^{i+2c-p}(F_{r+c-p}(X\times Y),A(j+c)).
\end{equation}
Pick a cycle $\Gamma\in\ker(d_{p})$ and set $|\Gamma|:=\supp(\Gamma)\subset X\times\Delta^{p}$ for its support. We consider the projection map $\pr_{1}:X\times\Delta^{p}\to X$ and note that $Z:=\overline{\pr_{1}(|\Gamma|)}\subset X$ has codimension at least $c-p$ in $X$. Up to enlarging this closed subscheme, we may assume that $Z\subset X$ is equi-dimensional of codimension-$(c-p)$ in $X$ and that $[\Gamma]\in\CH^{p}(Z,p\ ;A)$. There is a canonical homomorphism $\rho:H^{2c-p}_{Z}(X_{\zar},A_{X}(c)_{\zar})\to H^{2c-p}_{Z}(X,A(c)),$ where $H^{\ast}_{Z}(X,A(c))$ is the appropriate cohomology group with support; see \eqref{eq:etale-motivic-complex}. We thus obtain a cycle class map \begin{equation}\label{eq:cl_Z}
    \cl_{Z}: \CH^{p}(Z,p\ ;\A)\cong H^{2c-p}_{Z}(X_{\zar},A_{X}(c)_{\zar})\overset{\rho}{\longrightarrow} H^{2c-p}_{Z}(X,A(c)) 
\end{equation}
and consider the class $\cl_{Z}(\Gamma)\in H^{2c-p}_{Z}(X,A(c))$.\par
On the other hand, let $\alpha\in H^{i}(F_{r}Y,A(j))$ and choose a lift $\tilde{\alpha}\in H^{i}(U, A(j))$ over some open subset $U\subset X$ whose complement $R:=Y\setminus U$ has codimension at least $r+1$ in $Y$. We consider the following composite 
\begin{equation}\label{eq:cl_{Z}xa}
\begin{split}&H^{i}(U,A(j))\xrightarrow{\text{Exc}\circ\cl_{Z\times U}(p^{\ast}\Gamma)\cup q^{\ast}}H^{i+2c-p}_{Z\times U}(X\times Y\setminus (Z\times R),A(c+j))\\
& \overset{\iota_{\ast}}{\longrightarrow} H^{i+2c-p}(X\times Y\setminus (Z\times R),A(c+j)),
\end{split} 
\end{equation}
where $$\cup: H^{2c-p}_{Z\times U}(X\times U,A(c))\otimes H^{i}(X\times U,A(j))\longrightarrow H^{i+2c-p}_{Z\times U}(X\times U,A(c+j))$$ is the natural cup product pairing for $(H^{\ast},H^{\ast}_{Z})$ and where for the middle term of \eqref{eq:cl_{Z}xa} we use the excision isomorphism $\text{Exc}$
$$\ H^{i+2c-p}_{Z\times U}(X\times Y\setminus (Z\times R),A(c+j))\cong H^{i+2c-p}_{Z\times U}(X\times U,A(c+j)).$$
Now the closed subset $Z\times R\subset X\times Y$ has codimension at least $c-p+r+1$ in $X\times Y$ and thus we can define $\Gamma\times \alpha:=[\iota_{\ast}\text{Exc}(\cl_{Z\times U}(p^{\ast}\Gamma)\cup\tilde{\alpha})]\in H^{i+2c-p}(F_{c-p+r}(X\times Y),A(c+j))$. Note that the class $\Gamma\times\alpha$ is well defined, i.e., it is independent of our choices of a lift $\tilde{\alpha}$ and a closed subscheme $Z\subset X$ such that $|\Gamma|\subset Z\times\Delta^{p}$. In fact, the former follows as any two lifts of $\alpha$ coincide over an open subset that contains $F_{r}Y$. In addition, the covariant functoriality of the cycle map $\cl_{Z}$ for proper maps implies the latter, since a diagram of the form
\begin{equation*}
    \begin{tikzcd}
{H^{i_{1}}_{Z}(X,A(j_{1}))} \arrow[d, "\cup \beta"] \arrow[r, "\iota_{\ast}"] & {H^{i_{1}}_{Z'}(X,A(j_{1}))} \arrow[d, "\cup \beta"] \\
{H^{i_{1}+i_{2}}_{Z}(X,A(j_{1}+j_{2}))} \arrow[r, "\iota_{\ast}"]             & {H^{i_{1}+i_{2}}_{Z'}(X,A(j_{1}+j_{2}))}            
\end{tikzcd}
\end{equation*}
always commutes for $\beta\in H^{i_{2}}(X,A(j_{2}))$ and $Z\subset Z'$ closed subschemes of $X$ (one can see this using the six functor formalism in the resp. derived category for instance).\par
By construction, the diagram 
\begin{equation*}
    \begin{tikzcd}
{ \ker(d_{p})\otimes H^{i}(F_{r+1}Y,A(j))} \arrow[d] \arrow[r, "\times"] & {H^{2c-p+i}(F_{c-p+r+1}(X\times Y),A(c+j))} \arrow[d] \\
{ \ker(d_{p})\otimes H^{i}(F_{r}Y,A(j))} \arrow[r, "\times"]             & {H^{2c-p+i}(F_{c-p+r}(X\times Y),A(c+j))}            
\end{tikzcd}
\end{equation*}
commutes, thus inducing a pairing 
\begin{equation}\label{eq:pairing-II}
    \ker(d_{p}) \otimes H^{i}_{r,nr}(Y,A(j)) \longrightarrow H^{i+2c-p}_{r+c-p,nr}(X\times Y,A(j+c)).
\end{equation}
\par Finally, we show that $\Gamma\times\alpha=0,$ if $\Gamma\in\im(d_{p+1})$. In this case, there is an equi-dimensional closed subscheme $W\subset X$ of codimension-$(c-p-1)$, such that $|\Gamma|\subset W\times\Delta^{p}$ and $\Gamma=0\in\CH^{p+1}(W,p)$. In particular, this implies that its cycle class $\cl_{W}(\Gamma)=0\in H^{2c-p}_{W}(X,A(c))$ vanishes. Let $\tilde{\alpha}\in H^{i}(V,A(c))$ be a lift of the class $\alpha\in H^{i}_{r,nr}(Y,A(j))$ over some open subset $V\subset Y$ whose complement $D:=Y\setminus V$ has codimension at least $r+2$ in $Y$. It can be readily checked that a representative for the class $\Gamma\times\alpha\in H^{i+2c-p}_{r+c-p,nr}(X\times Y,A(j+c))$ is given then as the value of $\tilde{\alpha}$ by the following map
\begin{equation*}
    \begin{split}&H^{i}(V,A(j))\xrightarrow{\text{Exc}\circ\cl_{Z\times U}(p^{\ast}\Gamma)\cup q^{\ast}}H^{i+2c-p}_{W\times V}(X\times Y\setminus (W\times D),A(c+j))\\
& \overset{\iota_{\ast}}{\longrightarrow} H^{i+2c-p}(X\times Y\setminus (W\times D),A(c+j)),
\end{split} 
\end{equation*}
where as before $\text{Exc}$ is the excision isomorphism $$H^{i+2c-p}_{W\times V}(X\times Y\setminus (W\times D),A(c+j))\cong H^{i+2c-p}_{W\times V}(X\times V,A(c+j)).$$ We thus conclude $\Gamma\times\alpha=0,$ since $\cl_{W\times V}(p^{\ast}\Gamma)=p^{\ast}\cl_{W}(\Gamma)=0$. The compatibility property for the pairing $\times$ is clear by construction. The proof is complete.
\end{proof}

\begin{remark}\label{rem:algebraic equivalence} Let $c>p$. In the notation of Lemma \ref{lem:pairing}, if one sets $$N^{c-p-1}\CH^{c}(X,p\ ;A):=\im (\bigoplus_{W\subset X}\ker(\cl_{W})\overset{\iota_{\ast}}{\longrightarrow}\CH^{c}(X,p\ ;A)),$$ where $W\subset X$ runs through equi-dimensional closed subschemes of codimension-$(c-p-1)$ in $X$ and where the map $\cl_{W}:\CH^{p+1}(W,p\ ;A)\cong H^{2c-p}_{W}(X_{\zar},A_{X}(c)_{\zar})\rightarrow H^{2c-p}_{W}(X,A(c))$ is the natural one, then the pairing 
$$N^{c-p-1}\CH^{c}(X,p\ ;A)\otimes H^{i}_{r,nr}(Y,A(j))\longrightarrow H^{i+2c-p}_{r+c-p,nr}(X\times Y, A(c+j))$$ becomes the zero map. This is implicitly implied by the argument in the last paragraph of the proof in the above lemma.\par
It also says that in the special case one considers an algebraically closed field $k,$ takes $A=\Z_{\ell}$ with $\ell\neq\Char(k)$, sets $p=0$ and let $H^{\ast}(-,A(j))$ be the cohomology functor given by $\ell$-adic continuous \'etale cohomology \eqref{eq:jannsen-cohomology}, then \eqref{eq:pairing} yields a well defined pairing 
$$\CH^{c}(X)/\sim_{\alg}\otimes H^{i}_{r,nr}(Y,\Z_{\ell}(j))\longrightarrow H^{i+2c}_{r+c,nr}(X\times Y,\Z_{\ell}(c+j)),$$ where $\CH^{c}(X)/\sim_{\alg}$ is the Chow group modulo algebraic equivalence; see \cite[Lemma 7.5, Proposition 6.6]{Sch-refined}.\end{remark}

Next, we compare the natural exterior product maps for higher Chow groups 
\begin{equation}\label{eq:exterior-product-chow}
    \CH^{c_{1}}(X,p_{1}\ ; A)\otimes \CH^{c_{2}}(Y,p_{2}\ ;A)\longrightarrow \CH^{c_{1}+c_{2}}(X\times Y,p_{1}+p_{2}\ ; A),\ [\Gamma_{1}]\otimes[\Gamma_{2}]\mapsto [\Gamma_{1}\times\Gamma_{2}] 
\end{equation}
with the pairings constructed in Lemma \ref{lem:pairing}. 

\begin{lemma}\label{lem:pairings-comparison} Let $X$ and $Y$ be smooth and equi-dimensional algebraic schemes over a perfect field $k$. Let $n\geq 2$ be an integer. Then the following diagram 
\begin{equation}\label{eq:exterior-products-compatibility}
    \begin{tikzcd}
{\CH^{c}(X,p\ ; \Z/n)\otimes \CH^{j}(Y,2j-i\ ; \Z/n)} \arrow[r, "\eqref{eq:exterior-product-chow}"]                                                & {\CH^{j+c}(X\times Y,p+2j-i\ ; \Z/n)}                                                      \\
{\CH^{c}(X,p\ ;\Z/n)\otimes H^{i-1}_{i-j-2,nr}(Y,(\Z/n)_{Y}(j)_{\et})} \arrow[r, "\eqref{eq:pairing}"] \arrow[u, "\id\otimes\theta"] & {H^{2c-p+i-1}_{c-p+i-j-2,nr}(X\times Y,(\Z/n)_{X\times Y}(c+j)_{\et})} \arrow[u, "\theta"]
\end{tikzcd}
\end{equation}
commutes, where we use the symbol $\theta$ for the coboundary maps of \eqref{eq:long-exact-seq_Z/n}.
\end{lemma}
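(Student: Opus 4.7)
The plan is to verify commutativity by unpacking both the connecting map $\theta$ and the pairing \eqref{eq:pairing}, and reducing to a standard Leibniz-type compatibility between the Gysin coboundary and the cup product. Via the identification \eqref{eq:bloch-ogus-isomorphism}, the long exact sequence \eqref{eq:long-exact-seq_Z/n} arises from a distinguished triangle relating the Zariski motivic complex $(\Z/n)_Y(j)_{\zar}$ with (a truncation of) $\R\pi_\ast (\Z/n)_Y(j)_{\et}$, so $\theta$ may be described concretely as follows: given $[\alpha]\in H^{i-1}_{i-j-2,nr}(Y,(\Z/n)_Y(j)_{\et})$ with a lift $\tilde\alpha\in H^{i-1}(V,\Z/n(j))$ over some open $V\subset Y$ with $R:=Y\setminus V$ of codimension $\ge i-j-1$, the class $\theta([\alpha])\in\CH^j(Y,2j-i\ ;\Z/n)$ is essentially the Gysin residue of $\tilde\alpha$ along $R$, which is represented by a cycle supported on $R$.

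Next I would unwind the pairing \eqref{eq:pairing} in the case $c>p$. Writing $[\Gamma]\in\CH^c(X,p\ ;\Z/n)$ with $|\Gamma|\subset Z\times\Delta^p$ for an equi-dimensional closed subscheme $Z\subset X$ of codimension $c-p$, the proof of Lemma \ref{lem:pairing} realizes $[\Gamma]\times[\alpha]$ as $[\iota_\ast\mathrm{Exc}(\cl_{Z\times V}(p^\ast\Gamma)\cup q^\ast\tilde\alpha)]$, which lives on $X\times Y\setminus (Z\times R)$. Applying $\theta$ to this class amounts to taking its Gysin residue along the codimension-$(c-p+i-j-1)$ closed subset whose complement contains $\tilde\alpha$. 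The cycle class $p^\ast\cl_Z(\Gamma)$ extends globally (with support on $Z\times Y$) and is thus annihilated by residues in the $Y$-direction. This gives the Leibniz-type identity
\begin{equation*}
\partial\bigl(p^\ast\cl_Z(\Gamma)\cup q^\ast\tilde\alpha\bigr)=p^\ast\cl_Z(\Gamma)\cup q^\ast\partial(\tilde\alpha),
\end{equation*}
which under the identification above translates precisely into $\theta([\Gamma]\times[\alpha])=[\Gamma\times\theta(\alpha)]$, i.e., the exterior product of the cycle $\Gamma$ with the cycle representative of $\theta([\alpha])$ supported on $R$. This is the exterior product of higher Chow cycles.

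The case $c\le p$ is simpler: there the pairing factors through the cycle class map $\cl_X:\CH^c(X,p\ ;\Z/n)\to H^{2c-p}(X,\Z/n(c))$ followed by an exterior product in ordinary cohomology and a restriction on the filtration $F_\ast$. Commutativity then follows from the naturality of $\theta$ with respect to the external cup product on the triangle used to define it, since $\cl_X([\Gamma])$ extends globally and the coboundary distributes over cup product with such classes. The main technical obstacle is to verify the Leibniz identity at the level of the defining distinguished triangle (equivalently, to construct a compatible cup product pairing of this triangle with itself), so that the connecting map is indeed a derivation with respect to the external cup product. This may be done either explicitly via a chosen mapping-cone model for the truncation involved in \eqref{eq:bloch-ogus-isomorphism}, or by invoking the standard formalism of cup products in long exact sequences coming from short exact sequences of complexes. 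Once this compatibility is in place, the diagram \eqref{eq:exterior-products-compatibility} commutes by direct inspection of representatives.
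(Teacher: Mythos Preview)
Your approach is essentially the same as the paper's: unwind the construction of $\theta$ via the Gysin coboundary along the closed complement $R$, unwind the pairing \eqref{eq:pairing} via the supported cycle class $\cl_Z(\Gamma)$, and reduce the commutativity to the Leibniz-type identity $\partial(p^\ast\cl_Z(\Gamma)\cup q^\ast\tilde\alpha)=p^\ast\cl_Z(\Gamma)\cup\partial(q^\ast\tilde\alpha)$, treating the cases $c>p$ and $c\le p$ separately. The paper carries this out concretely, citing \cite[Lemma B.2]{Sch-moving} and \cite[Lemma 2.4]{Sch-unramified} for the Leibniz identity in the two cases rather than appealing to an abstract multiplicativity of the truncation triangle.

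One point you leave implicit that the paper makes explicit: your description of $\theta([\alpha])$ as ``essentially the Gysin residue of $\tilde\alpha$ along $R$'' hides a nontrivial step. The residue $\partial(\tilde\alpha)$ lives in the \'etale group $H^{i}_{R}(Y_{\et},(\Z/n)_{Y}(j)_{\et})$, not in a higher Chow group; to get $\theta([\alpha])\in\CH^{j}(Y,2j-i;\Z/n)$ one must invert the comparison map $\cl_R:H^{i}_{R}(Y_{\zar},(\Z/n)_{Y}(j)_{\zar})\to H^{i}_{R}(Y_{\et},(\Z/n)_{Y}(j)_{\et})$. The paper checks via the Gersten resolution and the Beilinson--Lichtenbaum conjecture that $\cl_R$ is an isomorphism precisely when $\codim_Y(R)\ge i-j$ (your stated bound $\ge i-j-1$ is off by one; a lift of a class in $H^{i-1}_{i-j-2,nr}$ lives on an open whose complement has codimension $\ge i-j$). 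This inverse then has to be matched with the corresponding inverse $\cl_{Z\times R}^{-1}$ on the product, which is why the paper reduces the comparison to an identity purely on the \'etale side before applying $\cl^{-1}$. Your abstract suggestion (construct a multiplicative structure on the truncation triangle) would accomplish the same thing, but you should be aware that this is exactly where Beilinson--Lichtenbaum is being used.
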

\begin{proof} We first recall the description of the coboundary map $$\theta: H^{i-1}_{i-j-2,nr}(Y,(\Z/n)_{X}(j)_{\et})\to \CH^{j}(Y,2j-i\ ; \Z/n)$$ from \cite[Proposition 4.15, (27)]{kok-zhou}. Note that although the construction of $\theta$ in \cite[Proposition 4.15]{kok-zhou} is stated only for $n\in k^{\ast}$ invertible, it can also be generalised to $n=q^{r}$ with $q=\Char(k)>0$; cf. \cite[Corollary 1.3]{alexandrou-schreieder}. Let $[\alpha]\in H^{i-1}_{i-j-2,nr}(Y,(\Z/n)_{Y}(j)_{\et})$, where we can assume $\alpha\in H^{i-1}(Y\setminus R,(\Z/n)_{X}(j)_{\et})$ for some equi-dimensional closed subscheme $R\subset Y$ of codimension $\geq(i-j)$. The cycle class map $\cl_{R}: H^{i}_{R}(Y_{\zar}, (\Z/n)_{Y}(j)_{\zar})\to H^{i}_{R}(Y_{\et}, (\Z/n)_{Y}(j)_{\et})$ can be then checked to be an isomorphism. Indeed, this is a formal consequence of the Beilinson--Lichtenbaum conjecture (see Theorem \ref{thm:Licthenbaum-Beilinson-conjecture}), if one considers the relevant Zariski hypercohomology spectral sequences and note that $E^{p,q}_{2}=H^{p}_{R}(Y_{\zar},R^{q}\pi_{\ast}(\Z/n)_{Y}(j)_{\et})=0 $ for $p<i-j$ by a simple argument using the Gersten resolution; see \cite{BO,gros-suwa}. The class $\theta([\alpha])$ is then given as the value of $\alpha$ by the following map
\begin{equation}\label{eq:coboundary-theta}\begin{split}
    &H^{i-1}(Y\setminus R,(\Z/n)_{X}(j)_{\et})\overset{\partial}{\longrightarrow}H^{i}_{R}(Y_{\et}, (\Z/n)_{Y}(j)_{\et})\overset{\cl_{R}^{-1}}{\longrightarrow} H^{i}_{R}(Y_{\zar}, (\Z/n)_{Y}(j)_{\zar})\\
    &\overset{\iota_{\ast}}{\longrightarrow}H^{i}(Y_{\zar}, (\Z/n)_{Y}(j)_{\zar})=\CH^{j}(Y,2j-i\ ; \Z/n),
    \end{split}
\end{equation}
where $\partial$ is the coboundary map of the long exact \'etale cohomology sequence associated to the pair $(R,Y)$; see \cite[III.1.25]{milne}. In particular, we have $\theta(\alpha)=[\iota_{\ast}\cl^{-1}_{R}\partial(\alpha)]$.\par
Next, assume that $c>p$ and let $[\Gamma]\in\CH^{c}(X,p\ ;\Z/n)$. Choose an equi-dimensional closed subscheme $Z\subset X$ of codimension-$(c-p)$ in $X$, such that $\supp(\Gamma)\subset Z\times \Delta^{p}$. Since $\theta$ is compatible with flat pull-backs, we find $$\eqref{eq:exterior-product-chow}\circ (\id\otimes\theta)([\Gamma]\otimes[\alpha])=[p^{\ast}\Gamma\cup \iota_{\ast}\cl^{-1}_{X\times R}\partial(q^{\ast}\alpha)]=\iota_{\ast}[p^{\ast}\Gamma\cup \cl^{-1}_{X\times R}\partial(q^{\ast}\alpha)],$$   
where in the last equality, we view $\Gamma$ as a class in $H^{2c-p}_{Z}(X_{\zar},(\Z/n)_{X}(c)_{\zar})$.\par
On the other hand, the constructive proof of Lemma \ref{lem:pairing}, shows that $$\eqref{eq:pairing}([\Gamma]\otimes[\alpha])=[\iota_{\ast}\text{Exc}(p^{\ast}\cl_{Z}(\Gamma)\cup q^{\ast}\alpha)],$$
where the notation here is consistent with the one used in \eqref{eq:cl_{Z}xa}. We thus obtain 
$$\theta\circ\eqref{eq:pairing}([\Gamma]\otimes[\alpha])= [\iota_{\ast}\cl^{-1}_{Z\times R}\partial(\text{Exc}(p^{\ast}\cl_{Z}(\Gamma)\cup q^{\ast}\alpha))].$$
We finally need to show the equality $\cl^{-1}_{Z\times R}\partial(\text{Exc}(p^{\ast}\cl_{Z}(\Gamma)\cup q^{\ast}\alpha))=p^{\ast}\Gamma\cup \cl^{-1}_{X\times R}\partial(q^{\ast}\alpha)$ or equivalently $\partial(\text{Exc}(p^{\ast}\cl_{Z}(\Gamma)\cup q^{\ast}\alpha))=p^{\ast}\cl_{Z}(\Gamma)\cup \partial(q^{\ast}\alpha)$. The latter translates into a compatibility problem of cup products with residue maps, i.e., it suffices to know if the diagram
\begin{equation*}
   \begin{tikzcd}
{H^{i-1}(X\times (Y\setminus R),(\Z/n)_{X\times Y}(j)_{\et})} \arrow[d, "\text{Exc}\circ(\cl_{Z\times Y}(p^{\ast}\Gamma)\cup -)"] \arrow[r, "\partial"] & {H^{i}_{X\times R}(X\times Y,(\Z/n)_{X\times Y}(j)_{\et})} \arrow[d, "\cl_{Z\times Y}(p^{\ast}\Gamma)\cup -"] \\
{H^{i-1+2c-p}_{Z\times R}(X\times Y\setminus(Z\times R),(\Z/n)_{X\times Y}(j+c)_{\et})} \arrow[r, "\partial"]                                           & {H^{i+2c-p}_{Z\times R}(X\times Y,(\Z/n)_{X\times Y}(j+c)_{\et})}                                           
\end{tikzcd}
\end{equation*}
commutes. This in turn is well known; see \cite[Lemma B.2]{Sch-moving}. This proves the commutativity of \eqref{eq:exterior-products-compatibility} in the case $c>p$.\par

The case $c\leq p$ remains to be considered. In the above discussion, we can then take $Z=X$ and observe that $$\theta\circ\eqref{eq:pairing}([\Gamma]\otimes[\alpha])= [\iota_{\ast}\cl^{-1}_{X\times R}\partial(p^{\ast}\cl_{X}(\Gamma)\cup q^{\ast}\alpha)].$$ Thus, as before it is enough to note that $\partial(p^{\ast}\cl_{X}(\Gamma)\cup q^{\ast}\alpha)=p^{\ast}\cl_{X}(\Gamma)\cup \partial(q^{\ast}\alpha)$. The latter equality follows from \cite[Lemma 2.4]{Sch-unramified}. The proof of the lemma is now complete.
\end{proof}

\begin{corollary}\label{cor:pairings-comparison} Let $X$ and $Y$ be smooth and equi-dimensional algebraic schemes over a perfect field $k$. Let $n\geq2$ be an integer and consider the natural exterior product map \begin{equation}\label{eq:exterior-product-Fn}
    \CH^{c}(X,p \ ;\Z/n)\otimes F^{j,i}_{n}(Y)\longrightarrow F^{c+j,2c-p+i}_{n}(X\times Y),\ [\Gamma_{1}]\otimes[\Gamma_{2}]\mapsto [\Gamma_{1}\times \Gamma_{2}]
\end{equation}
induced from \eqref{eq:exterior-product-chow}, where recall that the cycle group $F^{j,i}_{n}(Y)$ is defined in \eqref{eq:F^c,p}. Then \eqref{eq:exterior-product-Fn} identifies through the isomorphisms \eqref{eq:ker_cl} with the pairing \begin{equation}\label{eq:pairing-refined}
\CH^{c}(X,p \ ;\Z/n)\otimes\frac{ H^{i-1}_{i-j-2,nr}(Y,(\Z/n)_{Y}(j)_{\et})}{H^{i-1}_{L}(Y,\Z/n(j))}\longrightarrow \frac{H^{2c-p+i-1}_{c-p+i-j-2,nr}(X\times Y,(\Z/n)_{X\times Y}(c+j)_{\et})}{H^{2c-p+i-1}_{L}(X\times Y,\Z/n(c+j))}
\end{equation}
induced from \eqref{eq:pairing}. 
\end{corollary}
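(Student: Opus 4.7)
The plan is to deduce the corollary from Lemma \ref{lem:pairings-comparison} together with the description of the isomorphisms $F^{c,p}_{n}(X)\cong H^{p-1}_{p-c-2,nr}(X_{\et},(\Z/n)_{X}(c)_{\et})/H^{p-1}_{L}(X,\Z/n(c))$ in Lemma \ref{lem:ker_cl}. Recall that the latter isomorphism is nothing but the one induced by the coboundary map $\theta$ in the long exact sequence \eqref{eq:long-exact-seq_Z/n}: by exactness, $\theta$ has image equal to $\ker(\tilde{\cl}^{c,p}_{X})=F^{c,p}_{n}(X)$ and kernel equal to the image of $H^{p-1}_{L}(X,\Z/n(c))$.

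First, I would check that \eqref{eq:exterior-product-Fn} is well defined, i.e.\ that the exterior product of cycles lands in $F^{c+j,2c-p+i}_{n}(X\times Y)$. This is automatic from the fact that the cycle class map $\tilde{\cl}$ is compatible with exterior products: if $\tilde{\cl}^{j,i}_{Y}([\Gamma_{2}])=0$, then $\tilde{\cl}^{c+j,2c-p+i}_{X\times Y}([\Gamma_{1}\times\Gamma_{2}])=p^{\ast}\tilde{\cl}^{c,p}_{X}([\Gamma_{1}])\cup q^{\ast}\tilde{\cl}^{j,i}_{Y}([\Gamma_{2}])=0$.

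Next, I would invoke the diagram \eqref{eq:exterior-products-compatibility} of Lemma \ref{lem:pairings-comparison}, specializing its source and target through the $\theta$-induced isomorphisms of Lemma \ref{lem:ker_cl}. Commutativity of that diagram says exactly that for $[\Gamma]\in\CH^{c}(X,p\ ;\Z/n)$ and $[\alpha]\in H^{i-1}_{i-j-2,nr}(Y,(\Z/n)_{Y}(j)_{\et})$ one has the equality $[\Gamma]\times\theta([\alpha])=\theta\bigl([\Gamma]\times[\alpha]\bigr)$ in $\CH^{c+j}(X\times Y,p+2j-i\ ;\Z/n)$, where the left hand side uses \eqref{eq:exterior-product-chow} and the right hand side uses \eqref{eq:pairing}. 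Combined with the previous step, this is precisely the statement that \eqref{eq:exterior-product-Fn} and \eqref{eq:pairing-refined} correspond under the identifications of Lemma \ref{lem:ker_cl}.

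Finally, I would verify that the pairing \eqref{eq:pairing-refined} actually descends to the stated quotients. If $[\alpha]$ comes from $H^{i-1}_{L}(Y,\Z/n(j))$, then by exactness of \eqref{eq:long-exact-seq_Z/n} we have $\theta([\alpha])=0$, and hence by commutativity $\theta\bigl([\Gamma]\times[\alpha]\bigr)=0$; exactness then places $[\Gamma]\times[\alpha]$ in the image of $H^{2c-p+i-1}_{L}(X\times Y,\Z/n(c+j))$, so the target class vanishes in the quotient. I do not expect a real obstacle here: all the content is packed into Lemma \ref{lem:pairings-comparison}, and the corollary is a bookkeeping exercise of transporting that commutative diagram across the isomorphisms of Lemma \ref{lem:ker_cl} and checking that the ensuing map passes to the quotients.
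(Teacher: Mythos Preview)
Your proposal is correct and takes exactly the same approach as the paper, which simply states that the corollary is an immediate consequence of Lemma~\ref{lem:pairings-comparison}. You have merely unpacked the bookkeeping that the paper leaves implicit: the isomorphisms \eqref{eq:ker_cl} are induced by $\theta$, and the commutative square \eqref{eq:exterior-products-compatibility} then transports \eqref{eq:exterior-product-Fn} to \eqref{eq:pairing-refined}.
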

\begin{proof} This is an immediate consequence of Lemma \ref{lem:pairings-comparison}.\end{proof}

\begin{remark}\label{rem:pairings-comparison-Z} An argument in the same line as the one made in Lemma \ref{lem:pairings-comparison} shows that the pairing 
\begin{equation*}\label{eq:pairing-refined-Z}
\CH^{c}(X,p)\otimes\frac{H^{i-2}_{i-j-3,nr}(Y,(\Q/\Z)_{Y}(j)_{\et})}{H^{i-2}_{L}(Y,\Z(j))}\longrightarrow \frac{H^{2c-p+i-2}_{c-p+i-j-3,nr}(X\times Y,(\Q/\Z)_{X\times Y}(c+j)_{\et})}{H^{2c-p+i-2}_{L}(X\times Y,\Z(c+j))}
\end{equation*}
given by \eqref{eq:pairing} identifies with the natural exterior product map 
\begin{equation*}
    \CH^{c}(X,p)\otimes G^{j,i}(Y)\longrightarrow G^{c+j,2c-p+i}(X\times Y),
\end{equation*}
where the cycle group $G^{j,i}$ is defined in \eqref{eq:G^c,p}.
\end{remark}

\section{Injectivity results}\label{sec:inj}
In this section we prove the main technical results of this paper. The following results and their proofs should be compared with \cite[Theorem 6.1]{Sch-griffiths}.
\begin{theorem}\label{thm:injectivity-CH^d(X,1)} Let $k$ be an algebraically closed field and let $R=\mathcal{O}_{C,x_{0}}$ be the local ring of a smooth $k$-curve $C$ at a point $x_{0}\in C(k)$. Let $K$ denote an algebraic closure of the fraction field $\Frac(R)$ and let $n\geq2$ be an integer invertible in $k$. Assume that there is a projective flat $R$-family $\mathcal{X}\to\Spec R$ with connected fibres of relative dimension $d$ whose total space $\mathcal{X}$ is integral and its generic fibre $X_{\eta}:=\mathcal{X}\times_{R}K$ is smooth and such that the following properties are satisfied, where $X:=\mathcal{X}_{\bar{\eta}}$ is the geometric generic fibre of the family:
\begin{enumerate}[label=$(\textbf{P\arabic*})$]
    \item\label{it:P1} There exist classes $\alpha_{j}\in H^{1}(\mathcal{X}_{\et},\Z/n),$ $j=1,2,\ldots,r,$ such that the elements $\delta(\alpha_{j}|_{X})\in H^{2}_{\cont}(X,\Z_{n})[n]$ form generators, where $\delta: H^{1}(X,\Z/n)\to H^{2}_{\cont}(X,\Z_{n})$ is the Bockstein map and such that
    \item\label{it:P2} for each component $X_{0i}$ of the central fibre $X_{0}=\mathcal{X}\times_{R}k$, the restriction $\alpha_{j}|_{X_{0i}}\in H^{1}(X_{0i},\Z/n)$ is zero. 
\end{enumerate}
 Then for any smooth projective variety $Y_{k}$ over $k$ with base change $Y:=Y_{k}\times_{k}K$ and for any free $\Z/n$-submodule $M\subset\CH^{d}(X,1)/n,$ the exterior product maps
 \begin{equation*}\begin{split}
     &\Lambda_{1}:M\otimes \frac{H^{i}_{r,nr}(Y_{k},\Z/n(j))}{H^{i}(Y_{k},\Z/n(j))} \longrightarrow M\otimes \frac{H^{i}_{r,nr}(Y,\Z/n(j))}{H^{i}(Y,\Z/n(j))}\overset{\eqref{eq:pairing}}{\longrightarrow} \frac{H^{2d+i-1}_{r+d-1,nr}(X\times Y,\Z/n(d+j))}{H^{2d+i-1}(X\times Y,\Z/n(d+j))}\\ 
     & \Lambda_{2}:M\otimes H^{i}_{r,nr}(Y_{k},\Z/n(j)) \longrightarrow M\otimes H^{i}_{r,nr}(Y,\Z/n(j))\overset{\eqref{eq:pairing}}{\longrightarrow} H^{2d+i-1}_{r+d-1,nr}(X\times Y,\Z/n(d+j))
     \end{split}
 \end{equation*}
 are both injective for all integers $i,r\geq0$.
\end{theorem}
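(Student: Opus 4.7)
My plan is to adapt the strategy of \cite[Theorem 6.1]{Sch-griffiths} to the refined-unramified / higher-Chow setting. Pick a $\Z/n$-basis $s_{1},\ldots,s_{N}$ of $M$, and suppose $\xi = \sum_{j=1}^{N}s_{j}\otimes\omega_{j}$ lies in $\ker\Lambda_{2}$; it suffices to show each $\omega_{j}$ vanishes in $H^{i}_{r,nr}(Y_{k},\Z/n(j))$, as the case of $\Lambda_{1}$ is identical modulo the subgroup $H^{i}(Y_{k},\Z/n(j))$.

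First I use (P1) to produce an invertible ``extraction matrix''. By Lemma \ref{lem:CH^{d}(X,1)/n} the cycle class map $\cl\colon \CH^{d}(X,1)/n\hookrightarrow H^{2d-1}(X_{\et},\Z/n(d))$ is injective, so the $\cl(s_{j})$ are $\Z/n$-linearly independent. Poincar\'e duality on $X$ together with Corollary \ref{cor:CH^{d}(X,1)/n} yields a perfect pairing $\CH^{d}(X,1)/n\otimes H^{2}_{\cont}(X,\Z_{n})[n]\to\Z/n$, and by (P1) the classes $\delta(\alpha_{k}|_{X})$ generate the second factor. A Smith-normal-form argument, reducing modulo each prime dividing $n$ to the classical case of a field, shows that any injective $\Z/n$-linear map between free $\Z/n$-modules admits an invertible square submatrix. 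Hence, after reindexing and replacing the $\alpha_{k}$ by suitable $\Z/n$-linear combinations, I may assume the $N\times N$ matrix
\[
a_{jk}\;:=\;\deg\bigl(\cl(s_{j})\cup \alpha_{k}|_{X}\bigr)\;\in\;\Z/n
\]
is a unit.

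The second step extracts the individual $\omega_{j}$. Cupping $\xi = 0$ with the global class $p_{X}^{*}(\alpha_{k}|_{X})\in H^{1}(X\times Y,\Z/n)$ preserves the filtration $F_{\ast}$, and then pushing forward along the relative-dimension-$d$ projection $p_{Y}\colon X\times Y\to Y$ (which shifts the refined-unramified filtration by $-d$ via \eqref{eq:pushforward}) produces, by the projection formula and the identification of \eqref{eq:pairing} with the higher Chow exterior product furnished by Lemma \ref{lem:pairings-comparison}, the relation
\[
0\;=\;p_{Y,\ast}\bigl(p_{X}^{*}(\alpha_{k}|_{X})\cup \xi\bigr)\;=\;\sum_{j=1}^{N}a_{jk}\,\omega_{j}\qquad\text{in }H^{i}_{r-1,nr}(Y,\Z/n(j)).
\]
Inverting the matrix $(a_{jk})$ gives $\omega_{j}=0$ in $H^{i}_{r-1,nr}(Y,\Z/n(j))$ for every $j$.

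The main obstacle lies in the filtration gap between $H^{i}_{r-1,nr}$ and the target group $H^{i}_{r,nr}(Y_{k},\Z/n(j))$: the naive cup-and-pushforward argument above only yields vanishing at the weaker level $r-1$, whereas the statement requires vanishing at level $r$. Property (P2) is designed precisely to bridge this gap via specialization. The classes $\alpha_{k}$ extend to global classes $\tilde{\alpha}_{k}\in H^{1}(\mathcal{X}\times_{k}Y_{k},\Z/n)$ on the family $\mathcal{Z}:=\mathcal{X}\times_{k}Y_{k}\to\Spec R$, and by (P2) these restrict to zero on each component $X_{0i}\times Y_{k}$ of the central fibre. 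After passing to a proper modification $\tilde{\mathcal{X}}\to\mathcal{X}$ that renders the family smooth over $R$ while preserving component-wise vanishing of the $\tilde{\alpha}_{k}$, the specialization map $\overline{sp}$ of Proposition \ref{prop:specialization} (see Remark \ref{rem:sp-refined}) applies and, by Proposition \ref{prop:specialization}(\ref{item:sp-global-class}), commutes with cupping against the $\tilde{\alpha}_{k}$. A component-by-component analysis on the central fibre --- where each $\tilde{\alpha}_{k}$ is trivial --- then upgrades the second-step identity to the full filtration level $r$, yielding
\[
\sum_{j=1}^{N}a_{jk}\,\omega_{j}\;=\;0\qquad\text{in }H^{i}_{r,nr}(Y_{k},\Z/n(j)),
\]
whence $\omega_{j}=0$ for each $j$. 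This specialization-plus-resolution analysis is the higher-Chow analogue of the flower-pot degeneration argument in \cite[Theorem 7.1]{Sch-griffiths}, and is the chief technical difficulty of the proof.
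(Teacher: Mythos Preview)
Your overall architecture matches the paper's: extract dual classes from (P1) via Poincar\'e duality, cup and push forward along $p_Y$ to isolate the individual $\omega_j$, and invoke (P2) plus specialization to kill the obstruction. The first two steps are essentially correct (the paper obtains exact Kronecker duals rather than an invertible matrix, using that the pairing $\overline{\cl_X(z_{\nu_0})}\cup\cl_X(z_\nu)$ depends only on $\delta(\overline{\cl_X(z_{\nu_0})})$ since the $z_\nu$ may be taken torsion; this is what makes (P1) sufficient). The genuine gap is in your third step.

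Your framing of the obstruction as a ``filtration gap from $r-1$ to $r$'' and your proposed fix---resolve $\mathcal{X}$ to a family smooth over $R$ and run specialization on $\tilde{\mathcal X}\times_k Y_k$---does not work as stated. If $\tilde{\mathcal X}\to\Spec R$ is genuinely smooth, its central fibre is smooth and (being a specialization of a connected fibre) irreducible, so component-wise vanishing collapses to the single condition $\alpha_k|_{\tilde X_0}=0$, which has no reason to follow from (P2); if instead you only resolve the total space (central fibre SNC), Proposition~\ref{prop:specialization} does not apply. More fundamentally, specializing the relation $\sum a_{jk}\omega_j=0\in H^{i}_{r-1,nr}(Y)$ along the constant family $Y_k\times_k R$ simply returns the same relation over $Y_k$, still at level $r-1$; nothing about the central fibre of $\mathcal X$ enters.

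The paper never specializes the family $\mathcal X$ itself. It works at the level of $H^{*}(F_{c}-)$ rather than $H^{*}_{c,nr}$: from $\Lambda(c)=0$ and the Gysin sequence~\eqref{eq:gysin-refined} one obtains a correction class $\xi$ supported on points $\omega\in(X\times U)^{(r+d)}$ such that $\gamma|_{X\times U}+\iota_*\xi$ already lies at level $F_{r+d}$ and comes from a global class. Cupping with the dual and pushing forward then lands directly in $H^{i}(F_{r}U,\Z/n(j))$---the correct level---but with an extra obstruction term $q_*(p^*\overline{\cl_X(z_{\nu_0})}\cup\iota_*\xi)$. After reducing (Claim~\ref{claim:support-xi}) to $\xi$ supported over $U^{(r)}$, the key move is to spread out the \emph{support} $W=\overline{\{\omega\}}$ of $\xi$ to a flat $\mathcal W\subset\mathcal X\times_R\mathcal U$, pass to a finite base change and normalize so that $\widetilde{\mathcal W}$ has reduced central fibre (via \cite{branchvarieties}), and excise the singular locus of the resulting map, whose image in $\mathcal U$ has codimension $>r$ and hence does not affect $F_r$. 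One is left with a genuinely smooth auxiliary family $\mathcal W'\to\Spec R$ together with a proper map $\tilde q:\mathcal W'\to\mathcal U'$, to which Proposition~\ref{prop:specialization} applies. The obstruction term, rewritten via the projection formula as $\tilde q_{\eta,*}(\tilde p_\eta^*\alpha\cup\xi)$, then specializes to $\tilde q_{0,*}(\tilde p_0^*\alpha|_{F_0\mathcal W'_0}\cup\overline{sp}(\xi))$, and this vanishes because each component of $\mathcal W'_0$ maps into some component $X_{0i}$ of the central fibre of $\mathcal X$, where $\alpha$ restricts to zero by (P2). This spreading-and-normalizing of the support of the Gysin correction term---not a resolution of $\mathcal X$---is the technical heart you are missing.
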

\begin{proof} We first prove that $\Lambda_{1}$ is injective. We can write $M=\bigoplus_{\nu=1}^{N}\Z/n\cdot z_{\nu}$, where the classes $z_{\nu}\in \CH^{d}(X,1)_{\tors}$ can be chosen to be torsion; see Lemma \ref{lem:CH^{d}(X,1)/n}. Let $c:=\sum^{N}_{\nu=1}z_{\nu}\otimes[\beta_{k,\nu}]\in\ker (\Lambda_{1}),$ where $\beta_{k,\nu}\in H^{i}(F_{r+1}Y_{k},\Z/n(j))$. We can then find an equi-dimensional closed subscheme $R_{k}\subset Y_{k}$ of codimension-$(r+2)$ in $Y_{k}$, such that $\beta_{k,\nu}\in H^{i}(Y_{k}\setminus R_{k},\Z/n(j))$ for all $\nu=1,2,\ldots,N$. Moreover let $Z\subset X$ be an equi-dimensional closed subscheme of $X$ of codimension-$(d-1)$ in $X$ such that $\supp(z_{\nu})\subset Z\times\Delta^{1}$ for all $\nu=1,2,\ldots,N$. We denote by $\beta_{\nu}$ the image of $\beta_{k,\nu}$ in $H^{i}(Y\setminus R,\Z/n(j)),$ where $R:=R_{k}\times_{k}K$. Then $\Lambda_{1}(c)$ is given as the class of the element 
$$\gamma:=\sum^{N}_{\nu=1} \iota_{\ast}\Exc (p^{\ast}\cl_{Z}(z_{\nu})\cup q^{\ast}\beta_{\nu})\in H^{2d+i-1}(X\times Y\setminus (Z\times R),\Z/n(j+d));$$
see proof of Lemma \ref{lem:pairing}, where $p:X\times Y\to X$ and $q:X\times Y\to Y$ are the natural projections. Note that if we further restrict the class $\gamma$ to $X\times U,$ where $U:=Y\setminus R$ (respectively, we keep the notation $U_{k}:=Y_{k}\setminus R_{k}$), then the class takes the form
$$\gamma|_{X\times U}:=\sum^{N}_{\nu=1} p^{\ast}\cl_{X}(z_{\nu})\cup q^{\ast}\beta_{\nu}\in H^{2d+i-1}(X\times U,\Z/n(j+d)).$$
Now, $$\Lambda_{1}(c)=0\iff [\gamma]\in\im(H^{2d+i-1}(X\times Y, \Z/n(j+d)))\longrightarrow H^{2d+i-1}_{r+d-1,nr}(X\times Y, \Z/n(j+d))).$$ The latter relation also implies that \begin{equation}\label{eq:gamma}[\gamma|_{X\times U}]\in\im(H^{2d+i-1}(X\times Y, \Z/n(j+d))\longrightarrow H^{2d+i-1}_{r+d-1,nr}(X\times U, \Z/n(j+d))).\end{equation} 
The classical Gysin sequence \cite[$\S$2.3.2]{brauer-group} induces a long exact sequence 
\begin{equation}\label{eq:gysin-refined}\begin{split}&\cdots\overset{\partial}{\longrightarrow}\bigoplus_{w\in(X\times U)^{(r+d)}}H^{i-1-2r}(F_{0}\overline{\{\omega\}},\Z/n(j-r))\overset{\iota_{\ast}}{\longrightarrow} H^{2d+i-1}(F_{r+d}(X\times U), \Z/n(j+d))\\ &\longrightarrow H^{2d+i-1}(F_{r+d-1}(X\times U), \Z/n(j+d))\overset{\partial}{\longrightarrow}\cdots;\end{split}\end{equation}
see \cite[Lemma 5.8]{Sch-refined}. Thus, condition \eqref{eq:gamma} together with the exact sequence \eqref{eq:gysin-refined} show that there exists $\xi\in \bigoplus_{w\in(X\times U)^{(r+d)}}H^{i-1-2r}(F_{0}\overline{\{\omega\}},\Z/n(j-r)),$ such that \begin{equation}\label{eq:epsilon}\epsilon:=\iota_{\ast}\xi+\gamma|_{X\times U}\in\im(H^{2d+i-1}(X\times Y, \Z/n(j+d)))\longrightarrow H^{2d+i-1}(F_{r+d}(X\times U), \Z/n(j+d))).\end{equation}
The cycle class map $\cl_{X}:\CH^{d}(X,1)/n\hookrightarrow H^{2d-1}(X_{\et},\Z/n(d))$ is injective; see Lemma \ref{lem:bound_mod_n} \eqref{it:bound_mod_n}. Thus, the classes $\cl_{X}(z_{\nu})$ with $\nu=1,2,\ldots,N$ generate a free $\Z/n$-submodule of $H^{2d-1}(X_{\et},\Z/n(d))$ of rank $N$. We claim the following.
\begin{claim}\label{claim:dual-class} For any $\nu_{0}\in\{1,2,\ldots, N\}$, there exists a class $\overline{\cl_{X}(z_{\nu_{0}})}\in H^{1}(X_{\et},\Z/n)$ with 
\begin{equation}\label{eq:dual-class}
    \overline{\cl_{X}(z_{\nu_{0}})}\cup \cl_{X}(z_{\nu})=\begin{cases}
        \cl^{d}_{X}(pt),\ \text{if}\ \nu=\nu_{0}\\ 
        0, \text{if}\ \nu\neq\nu_{0}.
    \end{cases}
\end{equation}
Moreover, the property \eqref{eq:dual-class} still holds if we add to $\overline{\cl_{X}(z_{\nu_{0}})}$ a class from $H^{1}_{\cont}(X,\Z_n)$, i.e., it only depends on $\delta(\overline{\cl_{X}(z_{\nu_{0}})})\in H^{2}_{\cont}(X,\Z_{n})[n]$.
\end{claim}
\begin{proof}[Proof of Claim \ref{claim:dual-class}] The
 Poincar\'e duality theorem for $X$ yields a perfect pairing
 $$H^{1}(X,\Z/n)\otimes H^{2d-1}(X,\Z/n(d))\longrightarrow H^{2d}(X,\Z/n(d))=\Z/n\cdot\cl^{d}_{X}(pt);$$
 see \cite[Chapter VI, Corollary 11.2]{milne}. In addition, the classes $\cl_{X}(z_{\nu_{0}})$ are $\Z/n$ linearly independent. This in turn implies the existence of $\overline{\cl_{X}(z_{\nu_{0}})}$. For the last claim, recall that $z_{\nu}\in\CH^{d}(X,1)_{\tors}$ is torsion. It follows that the cup product of the torsion class $\cl_{X}(z_{\nu})\in H^{2d-1}_{\cont}(X,\Z_{n}(d))$ with any class of $H^{1}_{\cont}(X,\Z_{n})$ is zero, as $H^{2d}_{\cont}(X,\Z_{n}(d))=\Z_{n}\cdot\cl_{X}(pt)$. Hence, the property \eqref{eq:dual-class} remains unchanged if we add to $\overline{\cl_{X}(z_{\nu_{0}})}$ a class from $H^{1}_{\cont}(X,\Z_n),$ as we wanted.
\end{proof}

 We compute the class $p^{\ast}\overline{\cl_{X}(z_{\nu_{0}})}\cup\epsilon$ with the use of property \eqref{eq:dual-class} and then find from the relation \eqref{eq:epsilon} that \begin{equation*} p^{\ast}\overline{\cl_{X}(z_{\nu_{0}})}\cup\iota_{\ast}\xi+p^{\ast}\cl^{d}_{X}(pt)\cup q^{\ast}\beta_{\nu_{0}}\in\im(H^{2d+i}(X\times Y, \Z/n(j+d))\longrightarrow H^{2d+i}(F_{r+d}(X\times U), \Z/n(j+d))).\end{equation*} We further take pushforward of the above with respect to the proper map $q:X\times Y\to Y$ and obtain that \begin{equation}\label{eq:q_{ast}relation}
    q_{\ast}(p^{\ast}\overline{\cl_{X}(z_{\nu_{0}})}\cup\iota_{\ast}\xi)+\beta_{\nu_{0}}\in\im(H^{i}(Y,\Z/n(j))\longrightarrow H^{i}(F_{r}U,\Z/n(j))).
\end{equation}

\begin{claim} \label{claim:support-xi} We may assume that $\xi=\xi_{1}+\cdots+\xi_{\lambda},$ where each $\xi_{t}\in H^{i-1-2r}(F_{0}\overline{\{\omega_{t}\}},\Z/n(j-r)),$ $t\in\{1,2,\ldots,\lambda\}$ is supported at a point $\omega_{t}\in (X\times U)^{(r+d)}$ with $q(\omega_{t})\in U^{(r)}$.
\end{claim}
\begin{proof}[Proof of Claim \ref{claim:support-xi}] We may write by additivity that $q_{\ast}(p^{\ast}\overline{\cl_{X}(z_{\nu_{0}})}\cup\iota_{\ast}\xi)=\sum_{t=1}^{\mu}q_{\ast}(p^{\ast}\overline{\cl_{X}(z_{\nu_{0}})}\cup\iota_{t\ast}\xi_{t}),$ where each $\xi_{t}\in H^{i-1-2r}(F_{0}\overline{\{\omega_{t}\}},\Z/n(j-r)),$ with $\omega_{t}\in (X\times U)^{(r+d)}$ and where $\iota_{t}:\overline{\{\omega_{t}\}}\hookrightarrow X\times U$ is the natural embedding. It suffices to show that the term $q_{\ast}(p^{\ast}\overline{\cl_{X}(z_{\nu_{0}})}\cup\iota_{t\ast}\xi_{t})$ is zero if $\codim_{Y}(q(\omega_{t}))>r$. Indeed, the projection formula gives that $q_{\ast}(p^{\ast}\overline{\cl_{X}(z_{\nu_{0}})}\cup\iota_{t\ast}\xi_{t})=(q\circ\iota_{t})_{\ast}((p\circ\iota_{t})^{\ast}\overline{\cl_{X}(z_{\nu_{0}})}\cup \xi_{t})$. Thus, the condition $\codim_{Y}(q(\omega_{t}))>r,$ in turn implies that $$(q\circ\iota_{t})_{\ast}((p\circ\iota_{t})^{\ast}\overline{\cl_{X}(z_{\nu_{0}})}|_{F_{0}\overline{\{p(\omega_{t})\}}}\cup \xi_{t})=0,$$ as we wanted.\end{proof}

 Next, we show that the obstruction term $q_{\ast}(p^{\ast}\overline{\cl_{X}(z_{\nu_{0}})}\cup\iota_{\ast}\xi)$ vanishes after specialization. To this end, consider the constant family $\mathcal{U}:=U_{k}\times_{k}R\to\Spec R$ and recall that we have a specialization map $$\bar{sp}: H^{i}(F_{r}U,\Z/n(j))\longrightarrow H^{i}(F_{r}U_{k},\Z/n(j));$$
 see $\S$\ \ref{subsec:specialization_maps} and Proposition \ref{prop:specialization} \eqref{item:sp-geometric}. We want to prove the following claim.
 \begin{claim}\label{claim:sp-obstruction-zero} We have that $\bar{sp}((q\circ\iota)_{\ast}((p\circ\iota)^{\ast}\overline{\cl_{X}(z_{\nu_{0}})}\cup \xi))=0$.
\end{claim}
\begin{proof}[Proof of Claim \ref{claim:sp-obstruction-zero}] First, by linearity and Claim \ref{claim:support-xi}, we may assume that $\xi\in H^{i-1-2r}(F_{0}\overline{\{\omega\}},\Z/n(j-r))$ for some $\omega\in (X\times U)^{(r+d)}$ with $q(\omega)\in U^{(r)}$. We set $W:=\overline{\{\omega\}}\subset X\times U$ and pick a finite field extension $F/\Frac(R)$ over which $W$ is defined, i.e., $W=W_{F}\times_{F}K$ with $W_{F}\subset X_{\eta}\times_{\Frac(R)} \mathcal{U}_{\eta}\times_{\Frac(R)} F$. Up to enlarging $F,$ if necessary, we may also assume that $\xi$ has a lift $\xi_{F}\in H^{i-1-2r}(F_{0}W_{F},\Z/n(j-r))$; see \cite[Chapter III, Lemma 1.16, Remark 1.17 (a)]{milne}. Letting $\tilde{R}$ be a local ring at a maximal ideal of the normalization of $R$ in $F$ and performing the finite ramified base change $\tilde{R}/R,$ we see that the properties \ref{it:P1} and \ref{it:P2} still remain true for the family $\mathcal{X}_{\tilde{R}}\to\Spec\tilde{R}$. This is clear as both maps $H^{1}(\mathcal{X},\Z/n)\to H^{1}(X,\Z/n)$ and $H^{1}(\mathcal{X},\Z/n)\to H^{1}(X_{0i},\Z/n)$ factor through the pullback map $H^{1}(\mathcal{X},\Z/n)\to H^{1}(\mathcal{X}_{\tilde{R}},\Z/n)$. In particular, the above discussion tells us that we can take $F$ as $\Frac(R),$ i.e., $W$ is defined over the generic point of $R$ and that $\xi$ has a lift $$\xi_{\Frac(R)}\in H^{i-1-2r}(F_{0}W_{\Frac(R)},\Z/n(j-r)).$$

\par Next, let $\mathcal{W}\subset \mathcal{X}\times_{R}\mathcal{U}\to\Spec R$ be the schematic closure of $W_{\Frac(R)}\subset X_{\eta}\times_{\Frac(R)}U_{\eta}$ inside $\mathcal{X}\times_{R}\mathcal{U}$ and note that this is a flat $R$-family. It follows from \cite[Theorem 2.5, Corollary 2.2]{branchvarieties} that after a finite ramified base change of discrete valuation rings $R'/R$ followed by normalizing the total space, we can achieve the result that the central fiber of the normalization $\widetilde{\mathcal{W}_{R'}}\to\Spec R'$ is reduced. Thus, up to replacing $R'$ with $R,$ we can assume that the central fibre of the normalization $f:\widetilde{\mathcal{W}}\to\Spec R$ of $\mathcal{W}$ is reduced.

\par The singular locus $\Sing(f)\subset\widetilde{\mathcal{W}}$ of $f$ in turn is a closed subscheme of codimension at least $2$ in $\widetilde{\mathcal{W}}$. Therefore, if we set $\mathcal{S}:=q(\Sing(f))\subset\mathcal{U}$, then for the family $\mathcal{U}\setminus\mathcal{S}\to\Spec R$ one has $F_{r}(\mathcal{U}_{\eta}\setminus\mathcal{S}_{\eta})=F_{r}(\mathcal{U}_{\eta})$ along with $F_{r}(\mathcal{U}_{0}\setminus\mathcal{S}_{0})=F_{r}(\mathcal{U}_{0})$. Especially, we get a proper $R$-morphism $\tilde{q}:\mathcal{W}':=\widetilde{\mathcal{W}}\setminus q^{-1}(\mathcal{S})\to\mathcal{U}':=\mathcal{U}\setminus\mathcal{S},$ whose both domain and target are given by smooth $R$-families with equi-dimensional fibres. We thus obtain a commutative diagram of the form 
\begin{equation}
    \begin{tikzcd}
{H^{i-2r}(F_{0}\widetilde{\mathcal{W}}_{\eta},\Z/n(j-r))} \arrow[r, "sp"] \arrow[d, "(\tilde{q}_{\eta})_{\ast}"] & {H^{i-2r}(F_{0}\mathcal{W}'_{0},\Z/n(j-r))} \arrow[d, "(\tilde{q}_{0})_{\ast}"] \\
{H^{i}(F_{r}\mathcal{U}_{\eta},\Z/n(j))} \arrow[r, "sp"]                                                     & {H^{i}(F_{r}U_{k},\Z/n(j))},           \end{tikzcd}
\end{equation}
where $\tilde{q}_{\eta}$ and $\tilde{q}_{0}$ are the base changes of $\tilde{q}$ over the generic and central fibre, respectively; see Proposition \ref{prop:specialization} \eqref{item:sp-pushforward}. Note that here we drop the symbol $\pi$ from the notation of the specialization maps due to Proposition \ref{prop:specialization} \eqref{item:sp-k-alg}; see Remark \ref{rem:sp-alg}. We also denote by $\tilde{p}:\mathcal{W}'\to\mathcal{X}$ the obvious map. We may assume that up to adding a class from $H^1_{\cont}(X,\Z_n),$ if necessary, the class $\overline{\cl_{X}(z_{\nu_{0}})}\in H^{1}(X,\Z/n)$ has a lift $\alpha_{\nu_{0}}\in H^{1}(\mathcal{X},\mu_{n})$ that satisfies the property \ref{it:P2}; see Claim \ref{claim:dual-class}. Then by construction of $\bar{sp},$ we have the equality 
\begin{equation*}\begin{split}\bar{sp}((q\circ\iota)_{\ast}((p\circ\iota)^{\ast}\overline{\cl_{X}(z_{\nu_{0}})}\cup \xi))&=sp((\tilde{q}_{\eta})_{\ast}(((\tilde{p}_{\eta})^{\ast}\alpha_{\nu_{0}})|_{F_{0}\widetilde{\mathcal{W}}_{\eta}}\cup \xi_{\Frac(R)}))\\& = (\tilde{q}_{0})_{\ast}(sp(((\tilde{p}_{\eta})^{\ast}\alpha_{\nu_{0}})|_{F_{0}\widetilde{\mathcal{W}}_{\eta}}\cup \xi_{\Frac(R)}))\in H^{i}(F_{r}U_{k},\Z/n(j)).\end{split}\end{equation*}
By Proposition \ref{prop:specialization} \eqref{item:sp-global-class}, we also find that $sp(((\tilde{p}_{\eta})^{\ast}\alpha_{\nu_{0}})|_{F_{0}\widetilde{\mathcal{W}}_{\eta}}\cup \xi_{\Frac(R)})=\tilde{p}
^{\ast}\alpha_{\nu_{0}}|_{F_{0}\mathcal{W}_{0}'}\cup sp(\xi_{\Frac(R)})=0,$ where the last term is zero, since $\tilde{p}
^{\ast}\alpha_{\nu_{0}}|_{F_{0}\mathcal{W}_{0}'}=0$ by \ref{it:P2}. This finishes the proof of the claim.
\end{proof}
Finally, note that $\bar{sp}(\beta_{\nu_{0}})=\beta_{k,\nu_{0}}$; see Proposition \ref{prop:specialization} \eqref{item:sp-global-class}. Thus, by \eqref{eq:q_{ast}relation}, we see that Claim \ref{claim:sp-obstruction-zero} together with Proposition \ref{prop:specialization} \eqref{item:sp-F_*} imply that $\beta_{k,\nu_{0}}\in \im(H^{i}(Y_{k},\Z/n(j))\to H^{i}(F_{r}Y_{k},\Z/n(j))),$ where we use that $F_{r}U_{k}=F_{r}Y_{k}$. This proves the injectivity of $\Lambda_{1}$.
\par We now explain how one can deduce the injectivity of $\Lambda_{2}$. With the same notation as above, we see that $\Lambda_{2}(c)=0$ implies $[\gamma|_{X\times U}]=0\in H^{2d+i-1}_{r+d-1,nr}(X\times U,\Z/n(j+d))$. Thus, condition \eqref{eq:epsilon} becomes $\gamma|_{X\times U}+\iota_{\ast}\xi=0\in H^{2d+i-1}(F_{r+d}(X\times U),\Z/n(j+d))$ and instead of \eqref{eq:q_{ast}relation}, one in turn gets $ q_{\ast}(p^{\ast}\overline{\cl_{X}(z_{\nu_{0}})}\cup\iota_{\ast}\xi)+\beta_{\nu_{0}}=0\in H^{i}(F_{r}U,\Z/n(j))$. Hence, as before Claim \ref{claim:sp-obstruction-zero} along with Proposition \ref{prop:specialization} \eqref{item:sp-global-class} yield that $$0=\bar{sp}(q_{\ast}(p^{\ast}\overline{\cl_{X}(z_{\nu_{0}})}\cup\iota_{\ast}\xi)+\beta_{\nu_{0}})=\bar{sp}(\beta_{\nu_{0}})=\beta_{k,\nu_{0}}\in H^{i}(F_{r}Y_{k},\Z/n(j)),$$ as we wanted. This proves the injectivity of $\Lambda_{2}$. The proof is complete.
\end{proof}

\begin{remark}\label{remark:P1-P2} If one fixes the free $\Z/n$-submodule $M=\bigoplus_{\nu=1}^{N}\Z/n\cdot z_{\nu}\subset\CH^{d}(X,1)/n$ in Theorem \ref{thm:injectivity-CH^d(X,1)}, then both properties \ref{it:P1} and \ref{it:P2} can be relaxed further. In fact, for \ref{it:P1}, we only need that $$\delta(\overline{\cl_{X}(z_{\nu})})\in \im( H^{1}(\mathcal{X},\Z/n)\to H^{1}(X,\Z/n)\overset{\delta}{\to} H^{2}_{\cont}(X,\Z_{n})),$$ where $\overline{\cl_{X}(z_{\nu})}\in H^{1}(X,\Z/n)$ is a Poincar\'e dual of the class $\cl_{X}(z_{\nu})\in H^{2d-1}(X,\Z/n(d))$; see Claim \ref{claim:dual-class}. In addition, for \ref{it:P2}, it suffices to require that the lifts of $\delta(\overline{\cl_{X}(z_{\nu})})$ in $H^{1}(\mathcal{X},\Z/n)$ can be chosen in such a way that they become zero when restricted to the components $X_{0i}$ of the central fibre $X_{0}:=\mathcal{X}\times_{R}k$.
\end{remark}

\begin{theorem}\label{thm:injectivity-CH^d(X,1;Z/n)}Let $k$ be an algebraically closed field and let $R=\mathcal{O}_{C,x_{0}}$ be the local ring of a smooth $k$-curve $C$ at a point $x_{0}\in C(k)$. Let $K$ denote an algebraic closure of the fraction field $\Frac(R)$ and let $n\geq2$ be an integer invertible in $k$. Assume that there is a projective flat $R$-family $\mathcal{X}\to\Spec R$ with connected fibres of relative dimension $d$ whose total space $\mathcal{X}$ is integral and its generic fibre $X_{\eta}:=\mathcal{X}\times_{R}K$ is smooth and such that the following properties are satisfied, where $X:=\mathcal{X}_{\bar{\eta}}$ is the geometric generic fibre of the family:
\begin{enumerate}[label=$(\textbf{P'\arabic*})$]
    \item\label{it:P'1} There exist classes $\alpha_{j}\in H^{1}(\mathcal{X}_{\et},\Z/n),$ $j=1,2,\ldots,r,$ such that the elements $\alpha_{j}|_{X}\in H^{1}(X,\Z/n)$ form generators and such that
    \item\label{it:P'2} for each component $X_{0i}$ of the central fibre $X_{0}=\mathcal{X}\times_{R}k$, the restriction $\alpha_{j}|_{X_{0i}}\in H^{1}(X_{0i},\Z/n)$ is zero. 
\end{enumerate}
 Then for any smooth projective variety $Y_{k}$ over $k$ with base change $Y:=Y_{k}\times_{k}K$ and any free $\Z/n$-submodule $M\subset\CH^{d}(X,1\ ;\Z/n),$ the exterior product maps
 \begin{equation*}\begin{split}
     &\Lambda_{1}:M\otimes \frac{H^{i}_{r,nr}(Y_{k},\Z/n(j))}{H^{i}(Y_{k},\Z/n(j))} \longrightarrow M\otimes \frac{H^{i}_{r,nr}(Y,\Z/n(j))}{H^{i}(Y,\Z/n(j))}\overset{\eqref{eq:pairing}}{\longrightarrow} \frac{H^{2d+i-1}_{r+d-1,nr}(X\times Y,\Z/n(d+j))}{H^{2d+i-1}(X\times Y,\Z/n(d+j))}\\ 
     & \Lambda_{2}:M\otimes H^{i}_{r,nr}(Y_{k},\Z/n(j)) \longrightarrow M\otimes H^{i}_{r,nr}(Y,\Z/n(j))\overset{\eqref{eq:pairing}}{\longrightarrow} H^{2d+i-1}_{r+d-1,nr}(X\times Y,\Z/n(d+j))
     \end{split}
 \end{equation*}
 are both injective for all integers $i,r\geq0$.
\end{theorem}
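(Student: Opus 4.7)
The plan is to run the same argument as in Theorem \ref{thm:injectivity-CH^d(X,1)}, but to replace the Poincar\'e duals of torsion cycle classes by Poincar\'e duals of $\Z/n$-classes directly. First I would fix $M=\bigoplus_{\nu=1}^{N}\Z/n\cdot z_{\nu}$ and assume a relation $c=\sum_{\nu}z_{\nu}\otimes[\beta_{k,\nu}]\in\ker(\Lambda_{1})$, with $\beta_{k,\nu}\in H^{i}(Y_{k}\setminus R_{k},\Z/n(j))$ for some closed $R_{k}\subset Y_{k}$ of codimension $r+2$, and fix an equi-dimensional $Z\subset X$ of codimension $d-1$ containing the supports of all $z_{\nu}$. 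Using the explicit formula from the proof of Lemma \ref{lem:pairing}, this produces a representative $\gamma$ of $\Lambda_{1}(c)$; the Gysin sequence \eqref{eq:gysin-refined} combined with $\Lambda_{1}(c)=0$ then yields, as in \eqref{eq:epsilon}, an auxiliary class $\xi$ supported in codimension $r+d$ such that $\iota_{\ast}\xi+\gamma|_{X\times U}$ lifts to a global class on $X\times Y$.

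The key new ingredient is a $\Z/n$-coefficient analogue of Claim \ref{claim:dual-class}. By Lemma \ref{lem:bound_mod_n}(\ref{it:bound_mod_n}) the cycle map $\cl_{X}\colon\CH^{d}(X,1\ ;\Z/n)\to H^{2d-1}(X,\Z/n(d))$ is an isomorphism, so the classes $\cl_{X}(z_{\nu})$ span a free $\Z/n$-submodule of rank $N$. Poincar\'e duality with $\Z/n$-coefficients then produces classes $\overline{\cl_{X}(z_{\nu_{0}})}\in H^{1}(X,\Z/n)$ with
\[
\overline{\cl_{X}(z_{\nu_{0}})}\cup\cl_{X}(z_{\nu})=\delta_{\nu\nu_{0}}\cdot\cl^{d}_{X}(pt).
\]
By \ref{it:P'1}, the group $H^{1}(X,\Z/n)$ is already generated by the restrictions $\alpha_{j}|_{X}$, so each $\overline{\cl_{X}(z_{\nu_{0}})}$ is a $\Z/n$-linear combination of these generators and admits a lift $\alpha_{\nu_{0}}\in H^{1}(\mathcal{X},\Z/n)$ which, by \ref{it:P'2}, restricts to zero on every component of $X_{0}$. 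This replaces, and is strictly simpler than, the Bockstein manipulation in Claim \ref{claim:dual-class}, which was needed in the previous theorem because only the $n$-torsion of $H^{2}_{\cont}(X,\Z_{n})$ was controlled.

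With the dual lifts in hand, the remainder of the argument follows Theorem \ref{thm:injectivity-CH^d(X,1)} verbatim: cup the global lift of $\iota_{\ast}\xi+\gamma|_{X\times U}$ with $p^{\ast}\overline{\cl_{X}(z_{\nu_{0}})}$, push forward along $q$, and reduce as in Claim \ref{claim:support-xi} to a single point $\omega\in(X\times U)^{(r+d)}$ with $q(\omega)\in U^{(r)}$. After a finite ramified base change (during which \ref{it:P'1} and \ref{it:P'2} persist because the relevant restriction maps factor through $H^{1}(\mathcal{X},\Z/n)\to H^{1}(\mathcal{X}_{\tilde{R}},\Z/n)$), one takes the schematic closure of $\overline{\{\omega\}}$, normalizes it, and removes a codimension $\geq 2$ locus to arrange a proper $R$-morphism $\tilde{q}\colon\mathcal{W}'\to\mathcal{U}'$ between smooth equi-dimensional $R$-families with reduced central fibre. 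Proposition \ref{prop:specialization}(\ref{item:sp-pushforward})--(\ref{item:sp-global-class}), combined with $\tilde{p}^{\ast}\alpha_{\nu_{0}}|_{F_{0}\mathcal{W}'_{0}}=0$ coming from \ref{it:P'2}, then kills the obstruction under $\bar{sp}$; since $\bar{sp}(\beta_{\nu_{0}})=\beta_{k,\nu_{0}}$, this forces $\beta_{k,\nu_{0}}\in\im(H^{i}(Y_{k},\Z/n(j))\to H^{i}(F_{r}Y_{k},\Z/n(j)))$. The injectivity of $\Lambda_{2}$ follows from the same computation, where the conclusion strengthens to $\beta_{k,\nu_{0}}=0$ in $H^{i}(F_{r}Y_{k},\Z/n(j))$.

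The principal obstacle has already been resolved in Claim \ref{claim:sp-obstruction-zero}: the construction of the flat $R$-family $\mathcal{W}'$ with reduced central fibre on which the specialization map commutes with proper pushforward. That argument transports without modification. What is genuinely new, and easier, is the Poincar\'e-duality step—hypothesis \ref{it:P'1} directly furnishes lifts in $H^{1}(\mathcal{X},\Z/n)$ of the duals, so no Bockstein detour is required.
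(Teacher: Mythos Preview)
Your proposal is correct and follows essentially the same approach as the paper. The paper's proof is in fact just a brief paragraph noting that $\cl_X\colon\CH^{d}(X,1;\Z/n)\cong H^{2d-1}(X_{\et},\Z/n(d))$, explaining why hypothesis \ref{it:P'1} (rather than the weaker \ref{it:P1}) is needed---precisely because the pairing property of $\overline{\cl_X(z_{\nu_0})}$ is no longer invariant under adding classes from $H^1_{\cont}(X,\Z_n)$ when $z_{\nu_0}$ is not torsion---and then declaring the remainder identical to Theorem~\ref{thm:injectivity-CH^d(X,1)}; you have spelled out exactly this, with the same use of \ref{it:P'1} to lift the Poincar\'e duals directly.
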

\begin{proof} Recall that we have a canonical isomorphism $\cl_{X}:\CH^{d}(X,1\ ;\Z/n)\cong H^{2d-1}(X_{\et},\Z/n(d));$ see Lemma \ref{lem:bound_mod_n} \eqref{it:bound_mod_n}. Hence, if $M:=\bigoplus^{N}_{\nu=1}\Z/n\cdot z_{\nu}$, then $\cl_{X}(M)\subset H^{2d-1}(X_{\et},\Z/n(d))$ is a free $\Z/n$-submodule of rank $N$ generated by the classes $\cl_{X}(z_{\nu})$ with $\nu=1,2,\ldots, N$.\par
Let us explain why we need the slightly stronger condition \ref{it:P'1} here. This is related to Claim \ref{claim:dual-class}. Namely, the cup product $\overline{\cl_{X}(z_{\nu_{0}})}\cup\cl_{X}(z_{\nu}),$ where $\overline{\cl_{X}(z_{\nu_{0}})}\in H^{1}(X,\Z/n)$ is a Poincar\'e dual of the class $\cl_{X}(z_{\nu_{0}})\in H^{2d-1}(X_{\et},\Z/n(d)),$ (see \eqref{eq:dual-class}) might change if we add to $\overline{\cl_{X}(z_{\nu_{0}})}$ a class that lies in the image of $H^{1}_{\cont}(X,\Z_{n})\to H^{1}(X,\Z/n)$. Indeed, if $z_{\nu_{0}}\notin\CH^{d}(X,1)/n,$ then the class $\cl_{X}(z_{\nu_{0}})\in H^{2d-1}(X,\Z/n(d))$ no longer admits a lift in $H^{2d-1}_{\cont}(X,\Z_{n}(d))_{\tors}$; see Lemma \ref{lem:CH^{d}(X,1)/n}. The latter can of course occur if and only if the first Betti number $b_{1}$ \cite[Remark 5.2.7]{brauer-group} of $X$ is non-zero by Roitman's theorem \cite{bloch-roitman,roitman} and the following exact sequence 
$$0\longrightarrow\CH^{d}(X,1)/n\longrightarrow \CH^{d}(X,1\ ;\Z/n) \overset{\delta}{\longrightarrow}\CH_{0}(X)[n]\longrightarrow 0,$$ which is induced from the long exact coefficient sequence also known as Bockstein sequence.\par
Up to the above clarification, the proof of Theorem \ref{thm:injectivity-CH^d(X,1;Z/n)} is identical with the one of Theorem \ref{thm:injectivity-CH^d(X,1)}.
\end{proof}
\begin{remark}\label{remark:P'1-P'2} As explained in Remark \ref{remark:P1-P2}, the properties \ref{it:P'1} and \ref{it:P'2} in Theorem \ref{thm:injectivity-CH^d(X,1;Z/n)} can also be relaxed if one fixes the free $\Z/n$-submodule $M:=\bigoplus_{\nu=1}^{N}\Z/n\cdot z_{\nu}\subset\CH^{d}(X,1\ ;\Z/n)$. That is, for \ref{it:P'1}, it is enough to require that 
$$\overline{\cl_{X}(z_{\nu})}\in \im( H^{1}(\mathcal{X},\Z/n)\to H^{1}(X,\Z/n)),$$ where $\overline{\cl_{X}(z_{\nu})}\in H^{1}(X,\Z/n)$ is a Poincar\'e dual of the class $\cl_{X}(z_{\nu})\in H^{2d-1}(X,\Z/n(d))$; see Claim \ref{claim:dual-class}. Moreover, for \ref{it:P'2}, it suffices to require that one can choose the lifts of $\overline{\cl_{X}(z_{\nu})}$ in $H^{1}(\mathcal{X},\Z/n),$ in such a way that they become zero when restricted to the components $X_{0i}$ of the central fibre $X_{0}:=\mathcal{X}\times_{R}k$.\end{remark}

\begin{remark}\label{remark:Intuition-for-P1-P2} Recall that if $X$ is any scheme and $G$ is a finite group, then the \'etale cohomology group $H^{1}(X_{\et},G)$ classifies Galois coverings of $X$ with group $G$; see \cite[Chapter III, $\S$4]{milne}. Thus, the condition \ref{it:P'1} says that $\Z/n$-Galois coverings of $X$ lift as $\Z/n$-Galois coverings across the total space $\mathcal{X}$ and \ref{it:P'2} in turn implies that these coverings become trivial over the components $X_{0i}$ of the central fibre $X_{0}$.
\end{remark}

\section{Construction}\label{sec:construction}
In this section we introduce the surfaces together with their degenerations which will ultimately satisfy the properties \ref{it:P1} and \ref{it:P2} of Theorem \ref{thm:injectivity-CH^d(X,1)}. The surfaces we consider have already seen applications in \cite{Ale-griffiths}.\par
Let $k$ be an algebraically closed field. Let $n\geq2$ be an integer invertible in $k$ and let $\zeta\in k$ be a primitive $n$-th root of unity. As in \cite[$\S$3]{Ale-griffiths}, we consider the automorphism $\varphi$ of $\mathbb{P}^{5}_{k}$ 
\begin{equation}\label{eq:varphi}
    (x_{0}:x_{1}:x_{2}:x_{3}:x_{4}:x_{5})\mapsto (x_{0}:\zeta x_{1}:x_{2}:\zeta x_{3}:x_{4}:\zeta x_{5})
\end{equation}
whose fixed locus $\Fix(\varphi)$ is the union of the two planes $$P_{1}:=\{x_{0}=x_{2}=x_{4}=0\}\ \text{and}\ P_{2}:=\{x_{1}=x_{3}=x_{5}=0\}.$$ 
We let $\varphi$ act on the linear system $|\mathcal{O}_{\mathbb{P}^{5}_{k}}(n)|$ which parameterizes hypersurfaces of $\mathbb{P}^{5}_{k}$ of degree $n$ and then consider its sublinear system $|\mathcal{O}_{\mathbb{P}^{5}_{k}}(n)|^{\varphi}$ which is defined as the eigenspace of the eigenvalue $\lambda=1$. This in turn is a base-point-free linear system of dimension $d=(n+1)(n+2)-1$.\par
For general choices of invariant hypersurfaces $H_{i}\in |\mathcal{O}_{\mathbb{P}^{5}_{k}}(n)|^{\varphi},$ with $i\in\{1,2,3\}$, the complete intersection $T:=H_{1}\cap H_{2}\cap H_{3}$ is smooth and disjoint from $\Fix(\varphi)$. Hence, the quotient $S:=T/\varphi$ is also a smooth projective surface. Note that the first Betti number $b_{1}:=\dim_{\Q_{\ell}}H^{1}_{\cont}(S,\Q_{\ell}(0))$ is zero, where $\ell$ is any prime invertible in $k$ and $\Pic(S)_{\tors}\cong \Z/n$. Moreover, if $n = 2$, then $S$ is an Enriques surface and $T$ is its canonical double cover, that is, a $K3$ surface, while if $n\geq 3,$ then $S$ is a surface of general type; see \cite[Proposition 3.1]{Ale-griffiths}. \par
We include some of the properties of the surface $S$ which will be recalled later.

\begin{proposition}\label{pro:properties_S} Let $k$ be an algebraically closed field and let $n\geq2$ be an integer invertible in $k$. Let $T\subset\mathbb{P}^{5}_{k}$ be a smooth complete intersection of three invariant hypersurfaces $H_{i}\in{|\mathcal{O}_{\mathbb{P}^{5}_{k}}(n)|}^{\varphi}$, such that the automorphism $\varphi$ \eqref{eq:varphi} acts freely on $Y$. Let $S:=T/\varphi$ be the corresponding $\Z/n$-quotient of $T$. Then the following assertions are true:
\begin{enumerate}
     \item\label{it:NS(S)} We have $\Pic(S)=\NS(S)$ and $\NS(S)_{\tors}\cong\Z/n$. 
     \item\label{it:Bockstein iso} The Bockstein map yields an isomorphism $\delta : H^{1}(S_{\et},\Z/n)\cong H^{2}_{\cont}(S,\Z_{n})_{\tors}\cong\Z/n$.
     \item\label{it:CH2(S,1)/n} We have $\CH^{2}(S,1)[n]/n\CH^{2}(S,1)[n^{2}]=\CH^{2}(S,1)/n$ and $\CH^{2}(S,1)/n\cong\Z/n$.
     \item\label{it:hodge number} In addition, if $k=\C,$ then the Hodge numbers of $S$ are given by $h^{2,0}=\frac{n^{2}}{4}(5n^{2}-18n+17)-1$, $h^{1,1}=\frac{n^{2}}{2}(7n^{2}-18n+13),$ and $h^{1,0}=0$.
 \end{enumerate}
\end{proposition}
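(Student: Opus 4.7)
The plan is to exploit that $\pi : T \to S$ is an étale Galois cover of degree $n$ with group $\langle\varphi\rangle \cong \Z/n$, since by hypothesis $\varphi$ acts freely on $T$; this allows me to transfer topological and cohomological invariants from the complete intersection $T$, where they are computable by classical means, down to $S$.

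For (1), I first invoke the Lefschetz hyperplane theorem for the smooth complete intersection $T \subset \mathbb{P}^{5}_{k}$ of dimension $2$ to conclude $\pi_{1}^{\et}(T) = 0$ and $H^{1}(T, \OO_{T}) = 0$. Since $\pi : T \to S$ is étale Galois of degree $n$, it follows that $\pi_{1}^{\et}(S) \cong \Z/n$, and via the decomposition $\pi_{\ast}\OO_{T} = \bigoplus_{i=0}^{n-1} L^{\otimes i}$ for a generator $L$ of the relevant $\Z/n$-subgroup of $\Pic(S)[n]$, also $H^{1}(S, \OO_{S}) = 0$. The latter yields $\Pic^{0}(S) = 0$ and therefore $\Pic(S) = \NS(S)$, while Kummer theory combined with the computation $H^{1}(S, \mu_{m}) = \Hom(\pi_{1}^{\et}(S), \mu_{m}) = \Z/\gcd(m,n)$ shows $\Pic(S)_{\tors} = \Z/n$. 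For (2), the Bockstein long exact sequence attached to $0 \to \Z_{n} \xrightarrow{n} \Z_{n} \to \Z/n \to 0$ reads
\[
H^{1}_{\cont}(S, \Z_{n}) \xrightarrow{n} H^{1}_{\cont}(S, \Z_{n}) \to H^{1}(S, \Z/n) \xrightarrow{\delta} H^{2}_{\cont}(S, \Z_{n})[n] \to 0;
\]
since $\pi_{1}^{\et}(S)$ is finite, $H^{1}_{\cont}(S, \Z_{n}) = 0$, so $\delta$ is injective, and its target equals $H^{2}_{\cont}(S, \Z_{n})_{\tors} \cong \NS(S)_{\tors} \otimes \Z_{n} = \Z/n$ (the cycle class map gives $H^{2}_{\cont}(S, \Z_{n}(1))_{\tors} \cong \NS(S)_{\tors} \otimes \Z_{n}$, and over algebraically closed $k$ the twist does not affect the underlying abelian group). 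Part (3) then follows immediately from (1) via Lemma \ref{lem:properties}\,(a) with $X = S$ and $d = 2$.

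For (4), I would first compute $\chi(\OO_{T})$ and the topological Euler characteristic $\chi_{\mathrm{top}}(T)$ from the Chern classes of $T$, which arise from the formula $c(T) = (1+H)^{6}/(1+nH)^{3}$ truncated modulo $H^{3}$, combined with $H^{2} \cdot [T] = n^{3}$ and Noether's formula; this yields $\chi(\OO_{T}) = n^{3}(5n^{2}-18n+17)/4$ and $\chi_{\mathrm{top}}(T) = n^{3}(6n^{2}-18n+15)$. Both invariants are multiplicative along the étale cover $\pi : T \to S$: for $\chi_{\mathrm{top}}$ this is standard, while for $\chi(\OO_{\bullet})$ one uses that the line bundles $L^{\otimes i}$ appearing in $\pi_{\ast}\OO_{T}$ are numerically trivial, so that Riemann--Roch gives $\chi(L^{\otimes i}) = \chi(\OO_{S})$. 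Dividing by $n$ and using $h^{1,0}(S) = 0$, the formulas $h^{2,0}(S) = \chi(\OO_{S}) - 1$ and $h^{1,1}(S) = b_{2}(S) - 2 h^{2,0}(S)$ with $b_{2}(S) = \chi_{\mathrm{top}}(S) - 2$ produce the claimed values.

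The main obstacle is arithmetic rather than conceptual: extracting the Hodge numbers of $S$ requires careful bookkeeping in the Chern class expansion for $T$ and in the division by $n$ under the étale quotient, but once this is carried out the remaining assertions are essentially forced by $\pi_{1}^{\et}(S) = \Z/n$ together with Lemma \ref{lem:properties}.
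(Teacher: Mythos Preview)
Your argument is correct and closely parallels the paper's proof. For (1) the paper runs the Hochschild--Serre spectral sequence for $\mathbb{G}_m$ along the cover $T\to S$ to get the exact sequence $0\to H^1(\Z/n,k^\ast)\to \Pic(S)\to \Pic(T)^{\Z/n}\to 0$ directly, while you instead compute $\pi_1^{\et}(S)\cong\Z/n$ and invoke Kummer theory; these are equivalent packagings of the same descent. One small caveat: your identification $\Pic(S)[m]\cong H^1(S,\mu_m)\cong\Hom(\pi_1^{\et}(S),\mu_m)$ only applies to $m$ invertible in $k$, so in positive characteristic $p$ you should add a line ruling out $p$-torsion (for instance, $\pi_\ast\pi^\ast=n$ is a unit on $p$-power torsion, so $\Pic(S)[p^\infty]$ injects into $\Pic(T)[p^\infty]=0$). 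The paper's Hochschild--Serre argument handles all torsion at once because it works with $\mathbb{G}_m$ rather than $\mu_m$.

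Parts (2) and (3) are essentially identical to the paper's proof: the paper also deduces $H^1_{\cont}(S,\Z_n)=0$ from (1) via Kummer and then reads off (2) from the Bockstein sequence, and for (3) it likewise reduces to the content of Lemma~\ref{lem:CH^{d}(X,1)/n} and Lemma~\ref{lem:properties}. For (4) the paper simply cites \cite[Proposition 3.1]{Ale-griffiths}, so your explicit Chern-class computation via $c(T_T)=(1+H)^6/(1+nH)^3$, Noether's formula, and multiplicativity of $\chi(\OO_\bullet)$ and $\chi_{\mathrm{top}}$ along the \'etale cover is more self-contained; the resulting numbers agree with the statement.
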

\begin{proof} The properties are well-known. We include a proof for the sake of completeness. The Hochschild--Serre spectral sequence associated to the $\Z/n$-Galois covering $T\to S,$ $$E^{p,q}_{2}:=H^{p}(\Z/n,H^{q}(T_{\et},\mathbb{G}_{m}))\implies H^{p+q}(S_{\et},\mathbb{G}_{m})$$ yields a short exact sequence 
$$0\longrightarrow H^{1}(\Z/n, H^{0}(T_{\et},\mathbb{G}_{m}))\longrightarrow \Pic(S)\longrightarrow \Pic(T)^{\Z/n}\longrightarrow 0,$$
where we implicitly use the canonical isomorphism $H^{1}(-_{\et},\mathbb{G}_{m})\cong \Pic(-)$; see \cite[Chapter III, Proposition 4.9]{milne}. The first term of the above sequence is identified with the character group of $\Z/n$ and is thus again isomorphic to $\Z/n$. Now, the claim in \eqref{it:NS(S)} follows from the above short exact sequence along with the well-known fact that any smooth complete intersection of hypersurfaces in a projective space has invariants $\Pic^{0}$ and $\Pic_{\tors}$ zero.\par
Note that the claim in \eqref{it:Bockstein iso} is a direct consequence of \eqref{it:NS(S)}. In fact, we have a short exact sequence $$0\longrightarrow H^{1}_{\cont}(S,\Z_{n})/n\longrightarrow H^{1}(S_{\et},\Z/n)\longrightarrow H^{2}_{\cont}(S,\Z_{n})_{\tors}\longrightarrow0,$$ which arises from the long exact Bockstein sequence. The Kummer exact sequence \cite[$\S$3.2.1]{brauer-group} gives an isomorphism $H^{1}(S_{\et},\Z/m)\cong\Pic(S)[m]$ for any invertible integer $m\in k^{\ast}$ and hence \eqref{it:NS(S)} implies $H^{1}_{\cont}(S,\Z_{n})=0$; see \cite[(0.2)]{jannsen}. This proves the claim in \eqref{it:Bockstein iso}.\par

Next, we show the claim in \eqref{it:CH2(S,1)/n}. Note that $H^{3}_{\cont}(S,\Z_{n}(2))\cong\Z/n$, and thus we find that the cycle class map $$\frac{\CH^{2}(S,1)[n]}{n\CH^{2}(S,1)[n^2]}\overset{\cl}{\longrightarrow} H^{3}_{\cont}(S,\Z_{n}(2))_{\tors}/n$$ is an isomorphism; see Lemma \ref{lem:CH^{d}(X,1)/n}. Moreover, the same lemma implies $$\frac{\CH^{2}(S,1)[n]}{n\CH^{2}(S,1)[n^2]}=\CH^2(S,1)_{\tors}/n=\CH^{2}(S,1)/n,$$ as we wanted.\par
Finally, the claim in \eqref{it:hodge number} is contained in \cite[Proposition 3.1]{Ale-griffiths}. The proof of the proposition is complete.
\end{proof}

The following result gives the desired degeneration of the surface $S$.

\begin{theorem}\label{thm:degeneration} Let $k$ be an algebraically closed field and let $K$ be an algebraic closure of the field $k(t)$. Let $n\geq2$ be an integer invertible in $k$. Then there exists a projective flat family $\mathcal{S}\to\Spec \mathcal{O}_{\mathbb{A}^{1}_{k},0},$ such that 
\begin{enumerate}
    \item\label{it:S} The geometric generic fibre $S:=\mathcal{S}\times_{\mathcal{O}_{\mathbb{A}^{1}_{k},0}}K$ is a quotient of the form $T/\varphi,$ where $T\subset \mathbb{P}^{5}_{K}$ is a smooth
complete intersection of three very general invariant hypersurfaces $H_{i}\in |\mathcal{O}_{\mathbb{P}^{5}_{K}}(n)|^{\varphi}$ and $\varphi$ is
the automorphism \eqref{eq:varphi}.
\item\label{it:S_0} The central fibre $S_{0}=\bigcup_{i=1}^{n^{2}}S_{0i}$ is reduced and consists of $n^{2}$ rational components $S_{0i}$, each isomorphic to the projective plane $\mathbb{P}^{2}_{k}$. Moreover, the double curves $C_{i,j}:=S_{0i}\cap S_{0j}$ sit as projective lines inside each plane $S_{0i}$.
\item\label{it:galois-covering} The $\Z/n$-Galois covering $T\to S$ extends to a $\Z/n$-Galois covering $\mathcal{T}\to\mathcal{S}$ over $\Spec \mathcal{O}_{\mathbb{A}^{1}_{k},0}$, which in turn is trivial over each component $S_{0i}$ of the central fibre $S_{0}$.
\end{enumerate}
\end{theorem}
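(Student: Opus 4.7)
The plan is to construct $\mathcal{S}$ as the free $\Z/n$-quotient of a flat family $\mathcal{T} \subset \mathbb{P}^{5}_{R}$ (with $R := \mathcal{O}_{\mathbb{A}^1_k, 0}$) of $\varphi$-invariant complete intersections, specialising to a configuration in which each defining hypersurface splits into a $\varphi$-orbit of $n$ hyperplanes. Concretely, I would take generic linear forms $A_{i} \in \langle x_0, x_2, x_4 \rangle$ and $B_{i} \in \langle x_1, x_3, x_5 \rangle$ for $i = 1, 2, 3$, set $L_{j}^{(i)} := A_{i} + \zeta^{j} B_{i}$, and observe that
\[
H_{i}^{(0)} := \prod_{j=0}^{n-1} L_{j}^{(i)} = A_{i}^{n} + (-1)^{n+1} B_{i}^{n} \in |\mathcal{O}_{\mathbb{P}^5_k}(n)|^{\varphi},
\]
with the $n$ linear factors cyclically permuted by $\varphi$ (since $\varphi^{\ast} L_{j}^{(i)} = L_{j+1}^{(i)}$). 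Deforming along a generic direction $F_{i} \in |\mathcal{O}_{\mathbb{P}^5_k}(n)|^{\varphi}$ gives $\mathcal{H}_{i} := \{H_{i}^{(0)} + t F_{i} = 0\} \subset \mathbb{P}^{5}_{R}$, and I set $\mathcal{T} := \mathcal{H}_{1} \cap \mathcal{H}_{2} \cap \mathcal{H}_{3}$; flatness of $\mathcal{T}$ over $R$ comes from the Koszul resolution for complete intersections, and combined with integrality of the geometric generic fibre $T$ (a smooth complete intersection) this forces $\mathcal{T}$ itself to be integral.

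The central fibre decomposes as
\[
T_{0} = \bigcup_{(j_1, j_2, j_3) \in (\Z/n)^{3}} P_{(j_1, j_2, j_3)}, \qquad P_{(j_1, j_2, j_3)} := \{L_{j_1}^{(1)} = L_{j_2}^{(2)} = L_{j_3}^{(3)} = 0\},
\]
and for generic $A_{i}, B_{i}$ each $P_{(j_1, j_2, j_3)}$ is a codimension-$3$ linear subspace of $\mathbb{P}^5$, hence a $\mathbb{P}^2$; two such planes meet along a $\mathbb{P}^1$ precisely when they agree in exactly two of the three indices. From $\varphi^{\ast} L_{j}^{(i)} = L_{j+1}^{(i)}$ we deduce $\varphi(P_{(j_1, j_2, j_3)}) = P_{(j_1 - 1, j_2 - 1, j_3 - 1)}$, so $\varphi$ permutes the $n^3$ planes in $n^2$ orbits of size $n$ parametrised by $(b, c) := (j_2 - j_1, j_3 - j_1) \in (\Z/n)^{2}$; planes within a single orbit differ in all three indices and are therefore disjoint by genericity. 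Passing to $\mathcal{S}_{0} := T_{0}/\varphi$ thus yields $n^2$ reduced components $S_{0, \alpha}$, each isomorphic to $\mathbb{P}^2$, and any pair of meeting components intersects along a single projective line, arising as the image of $n$ pairwise $\varphi$-related lines in $T_{0}$.

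For the étale cover property, I would check that $T_{0} \cap P_{1} = \emptyset$: restricting $H_{i}^{(0)}$ to $P_{1} = \{x_0 = x_2 = x_4 = 0\}$ yields (up to sign) $(B_{i}|_{P_{1}})^{n}$, so the intersection is $V(B_{1}, B_{2}, B_{3}) \subset P_{1}$, which is empty for generic $B_{i}$ (three linearly independent linear forms in $\mathbb{P}^{2}$ have no common zero); the argument for $P_{2}$ is symmetric. For the generic $t$ and generic $F_{i}$, the three degree-$n$ curves $\mathcal{H}_{i} \cap P_{1}$ in $P_{1} \cong \mathbb{P}^{2}$ also have no common point (a generic codimension-$3$ intersection in $\mathbb{P}^2$ is empty), so $\varphi$ acts freely on every fibre of $\mathcal{T}$ and hence on the whole total space. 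Therefore $\mathcal{T} \to \mathcal{S} := \mathcal{T}/\varphi$ is the required $\Z/n$-Galois étale cover extending $T \to S$, and its restriction to any component $S_{0, \alpha}$ has preimage equal to the disjoint union of the $n$ planes in the corresponding orbit, each mapping isomorphically, so the cover trivialises over $S_{0, \alpha}$.

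The main technical issue is the ``very general'' clause on the geometric generic fibre: one must ensure that the hypotheses feeding into Proposition~\ref{pro:properties_S} survive the passage to the generic fibre of a one-parameter family. I would handle this by viewing the deformation as parametrising a sufficiently generic line through $(H_{1}^{(0)}, H_{2}^{(0)}, H_{3}^{(0)})$ in the parameter space $(|\mathcal{O}_{\mathbb{P}^5_k}(n)|^{\varphi})^{3}$, so that the generic point of this line simultaneously meets all the relevant Zariski-open conditions (smoothness of $T$, freeness of $\varphi$, vanishing of $b_{1}(T)$); a secondary bookkeeping point is to verify that the one-dimensional part of each pairwise intersection $S_{0,\alpha} \cap S_{0,\beta}$ is precisely the asserted projective line, any higher-codimension intersections being irrelevant for the triviality of the cover used in the applications.
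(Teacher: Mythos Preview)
Your construction is essentially identical to the paper's: writing a general hyperplane as $L^{i}=A_{i}+B_{i}$ with $A_{i}\in\langle x_{0},x_{2},x_{4}\rangle$ and $B_{i}\in\langle x_{1},x_{3},x_{5}\rangle$, one has $(\varphi^{j})^{\ast}L^{i}=A_{i}+\zeta^{j}B_{i}$, so your $L_{j}^{(i)}$ is exactly the paper's $L^{i}_{\varphi^{j}}$, and both proofs degenerate each invariant hypersurface to the product of a $\varphi$-orbit of hyperplanes and take the free quotient. The one place where the paper gives an explicit argument and you appeal to genericity is the disjointness of planes differing in all three indices (the paper's Lemma~\ref{lem:central-fibre}, proved by a $6\times 6$ determinant); with your eigenspace notation this is in fact immediate, since subtracting the two conjugate equations for each $i$ gives $(\zeta^{i_{r}}-\zeta^{j_{r}})B_{r}=0$, forcing $B_{1}=B_{2}=B_{3}=0$ and then $A_{1}=A_{2}=A_{3}=0$, which is empty for linearly independent $A_{i},B_{i}$.
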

\begin{proof} We first construct the family $\mathcal{T}\to\Spec \mathcal{O}_{\mathbb{A}^{1}_{k},0}$ mentioned in \eqref{it:galois-covering}. For a hyperplane $L\in|\mathcal{O}_{\mathbb{P}^{5}_{k}}(1)|$ we put $L_{\varphi^{i}}:={(\varphi^{i})}^{\ast}(L)\subset\mathbb{P}^{5}_{k}$. We shall use the following lemma.
\begin{lemma}\label{lem:central-fibre} For general $L^{1},L^{2},L^{3}\in |\mathcal{O}_{\mathbb{P}^{5}_{k}}(1)|$ one has that the intersections $L^{1}_{\varphi^{i_{1}}}\cap L^{2}_{\varphi^{i_{2}}}\cap L^{3}_{\varphi^{i_{3}}}$ and $L^{1}_{\varphi^{j_{1}}}\cap L^{2}_{\varphi^{j_{2}}}\cap L^{3}_{\varphi^{j_{3}}}$ are disjoint if  $i_{r}\neq j_{r}$ for all $r\in\{1,2,3\},$ where $0\leq i_{r},j_{r}\leq n-1$.
\end{lemma} 
\begin{proof}[Proof of Lemma \ref{lem:central-fibre}] This can be translated into a determinant criterion that yields an open condition. We include some of the details. Write 
\begin{equation*}
\begin{split} &L^{1}:=\{\alpha_{0}x_{0}+\alpha_{1}x_{1}+\alpha_{2}x_{2}+\alpha_{3}x_{3}+\alpha_{4}x_{4}+\alpha_{5}x_{5}=0\}\\
   &L^{2}:=\{\beta_{0}x_{0}+\beta_{1}x_{1}+\beta_{2}x_{2}+\beta_{3}x_{3}+\beta_{4}x_{4}+\beta_{5}x_{5}=0\}\\
   &L^{3}:=\{\gamma_{0}x_{0}+\gamma_{1}x_{1}+\gamma_{2}x_{2}+\gamma_{3}x_{3}+\gamma_{4}x_{4}+\gamma_{5}x_{5}=0\}
    \end{split}
\end{equation*}
for the equations of $L^{i}$ with $\alpha_{r},\beta_{r},\gamma_{r}\in k$. We let $A\in M(6\times6,k)$ be the matrix whose first 3 rows are given by the coefficients of the equations of $L^{r}_{\varphi^{i_{r}}}$ with $r\in\{1,2,3\}$ and whose last 3 rows are given by the coefficients of the equations of $L^{r}_{\varphi^{j_{r}}}$ with $r\in\{1,2,3\}$, that is,
\begin{equation*}
    A:=\begin{pmatrix}
   \alpha_{0}& \zeta^{i_{1}}\alpha_{1}& \alpha_{2}& \zeta^{i_{1}}\alpha_{3}&\alpha_{4}&\zeta^{i_{1}}\alpha_{5}\\
   \beta_{0}& \zeta^{i_{2}}\beta_{1}& \beta_{2}& \zeta^{i_{2}}\beta_{3}&\beta_{4}&\zeta^{i_{2}}\beta_{5}\\
   \gamma_{0}& \zeta^{i_{3}}\gamma_{1}& \gamma_{2}& \zeta^{i_{3}}\gamma_{3}&\gamma_{4}&\zeta^{i_{3}}\gamma_{5}\\
    \alpha_{0}& \zeta^{j_{1}}\alpha_{1}& \alpha_{2}& \zeta^{j_{1}}\alpha_{3}&\alpha_{4}&\zeta^{j_{1}}\alpha_{5}\\
   \beta_{0}& \zeta^{j_{2}}\beta_{1}& \beta_{2}& \zeta^{j_{2}}\beta_{3}&\beta_{4}&\zeta^{j_{2}}\beta_{5}\\
   \gamma_{0}& \zeta^{j_{3}}\gamma_{1}& \gamma_{2}& \zeta^{j_{3}}\gamma_{3}&\gamma_{4}&\zeta^{j_{3}}\gamma_{5}
    \end{pmatrix}.
\end{equation*}
It can be easily checked that \begin{equation}\label{eq:det-A}\det(A)=c(\zeta^{i_{1}}-\zeta^{j_{1}})(\zeta^{i_{2}}-\zeta^{j_{2}})(\zeta^{i_{3}}-\zeta^{j_{3}})(\alpha_{0}\gamma_{4}\beta_{2}-\alpha_{0}\beta_{4}\gamma_{2}+\beta_{0}\gamma_{2}\alpha_{4}-\beta_{0}\alpha_{2}\gamma_{4}-\gamma_{0}\beta_{2}\alpha_{4}+\gamma_{0}\beta_{4}\alpha_{2}),\end{equation}
where $c:=\det\begin{pmatrix}
    \alpha_{1}&\alpha_{3}&\alpha_{5}\\
    \beta_{1}&\beta_{3}&\beta_{5}\\
    \gamma_{1}&\gamma_{3}&\gamma_{5}
\end{pmatrix}.$\par
The calculation \eqref{eq:det-A} together with the assumption $i_{r}\neq j_{r}$ for all $r\in\{1,2,3\},$ where $0\leq i_{r},j_{r}\leq n-1$ show that the determinant of the matrix $A$ is defined by a non-zero polynomial. Thus, there exists a non-empty open subset $U_{(\bar{i},\bar{j})}\subset |\mathcal{O}_{\mathbb{P}^{5}_{k}}(1)|^{\times 3},$ such that for every point $(L^{1},L^{2},L^{3})\in U_{(\bar{i},\bar{j})},$ the intersections $L^{1}_{\varphi^{i_{1}}}\cap L^{2}_{\varphi^{i_{2}}}\cap L^{3}_{\varphi^{i_{3}}}$ and $L^{1}_{\varphi^{j_{1}}}\cap L^{2}_{\varphi^{j_{2}}}\cap L^{3}_{\varphi^{j_{3}}}$ are disjoint, where $\bar{i}:=(i_{1},i_{2},i_{3})$ and $\bar{j}:=(j_{1},j_{2},j_{3})$. Finally, consider the finite index set $I:=\{(\bar{i},\bar{j})\in\{0,1,2,\ldots,n-1\}^{\times 6}|\ i_{r}\neq j_{r}\}$ and put $U:=\bigcap_{(\bar{i},\bar{j})\in I} U_{(\bar{i},\bar{j})}$. Then $U$ is a non-empty open subset as $|\mathcal{O}_{\mathbb{P}^{5}_{k}}(1)|^{\times 3}$ is irreducible and clearly the closed points of $U$ have the desired property. This proves Lemma \ref{lem:central-fibre}.
\end{proof}
Now, let $H_{i}:=\{F_{i}=0\}\in |\mathcal{O}_{\mathbb{P}^{5}_{k}}(n)|^{\varphi},$ with $i\in\{1,2,3\},$ be general invariant hypersurfaces and choose hyperplanes $L^{1},L^{2},L^{3}\in|\mathcal{O}_{\mathbb{P}^{5}_{k}}(1)|,$ as in Lemma \ref{lem:central-fibre}. We then consider the one parameter family $\mathcal{T}\subset \mathbb{P}^{5}_{k}\times \mathbb{A}^{1}_{k}$ of complete intersections defined by the equations 
\begin{equation}\label{eq:T}
\begin{split}  &(1-t)\prod_{i=0}^{n-1}L^{1}_{\varphi^{i}}+tF_{1}=0,\\
    & (1-t)\prod_{i=0}^{n-1}L^{2}_{\varphi^{i}}+tF_{2}=0,\\
    & (1-t)\prod_{i=0}^{n-1}L^{3}_{\varphi^{i}}+tF_{3}=0,
 \end{split}
\end{equation}
where $t\in \mathbb{A}^{1}_{k}$ and where by slight abuse of notation we keep the same notation for the equations of $L^{i}_{\varphi^{j}}$. The fibre over $t=0$ then consists of $n^{3}$ planes $T_{0i}$. The $\Z/n$-orbit of each component $T_{0i}$ is a disjoint union of its $n$ conjugates and moreover the double intersection $D_{i}:=\bigcup_{j\neq i} (T_{0i}\cap T_{0j})$ consists of $3(n-1)$ lines that sit inside the plane $T_{0i}$.\par
Next, we perform the base change $\Spec\mathcal{O}_{\mathbb{A}^{1}_{k},0}\to\mathbb A^{1}_{k}$ and obtain a projective flat family $\mathcal{T}\to\Spec \mathcal{O}_{\mathbb{A}^{1}_{k},0}$, which is invariant under the $\Z/n$-action and has no fixed points. It is then readily seen that by our construction the family $\mathcal{S}:=\mathcal{T}/(\Z/n)\to \Spec\mathcal{O}_{\mathbb{A}^{1}_{k},0}$ satisfies the properties \eqref{it:S}, \eqref{it:S_0} and \eqref{it:galois-covering}. The proof of Theorem \ref{thm:degeneration} is complete.
\end{proof}

\begin{remark}\label{remark:semi-stable} We note that the family $\mathcal{S}\to\Spec\mathcal{O}_{\mathbb{A}^{1}_{k},0}$ constructed in Theorem \ref{thm:degeneration} is not semi-stable. For instance there are distinct components in the central fibre whose intersection is a point and those one needs to blow them up further. Eventually the resulting semi-stable degeneration will satisfy the properties \eqref{it:S}--\eqref{it:galois-covering} with the exception in \eqref{it:S_0} all components are rational but not necessarily isomorphic to the projective plane and the double intersections $D_{i}:=\bigcup_{j\neq i}(T_{0i}\cap T_{0j})$ will form a cycle of rational curves on each component $T_{0i}$. The case $n=2$ yields a Type III semi-stable degeneration of Enriques; see \cite[Theorem III]{kulikov}. The reason we do not use semi-stable families in this paper are the results of $\S$\ref{sec:inj} that allow us to work with more general families; see Theorems \ref{thm:injectivity-CH^d(X,1)} and \ref{thm:injectivity-CH^d(X,1;Z/n)}.
\end{remark}

\section{Main results}\label{sec:main-results}
\subsection{Examples with infinite morphic cohomology modulo any prime $\ell$}
In this subsection, we prove Theorem \ref{thm:general-totaro-result} and Corollary \ref{cor:general-totaro-result}. These are essential for the proof of Theorem \ref{thm:main}, but they are also of independent interest. We begin with the following elementary lemma.

\begin{lemma}\label{lem:kernel-exterior-product} Let $k$ be an algebraically closed field and let $n\geq2$ be an integer invertible in $k$. Let $X$ and $Y$ be smooth and equi-dimensional algebraic schemes over $k$ and assume that $X$ is also proper. Fix a higher Chow cycle $[\Gamma]\in\CH^{c_{1}}(X,p_{1}\ ;\Z/n),$ such that its cycle class $\cl_{X}([\Gamma])\in H^{2c_{1}-p_{1}}(X_{\et},\Z/n(c_{1}))$ is non-zero. Then the kernel of the exterior product map 
\begin{equation*}
    \CH^{c_{2}}(Y,p_{2}\ ;\Z/n)\longrightarrow \CH^{c_{1}+c_{2}}(X\times Y, p_{1}+p_{2}\ ;\Z/n),[z]\mapsto [\Gamma\times z]
\end{equation*}
 is contained in $F_{n}^{c_{2}, 2c_{2}-p_{2}}(Y):=\ker(\cl_{Y}:\CH^{c_{2}}(Y,p_{2}\ ;\Z/n)\to H^{2c_{2}-p_{2}}(Y_{\et},\Z/n(c_{2})))$.
\end{lemma}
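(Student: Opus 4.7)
I would reduce the problem to a cohomological one via the cycle class map, and then exploit Poincar\'e duality on the smooth proper factor $X$.

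First, applying the cycle class map $\tilde{\cl}_{X\times Y}^{c_{1}+c_{2},\,2(c_{1}+c_{2})-(p_{1}+p_{2})}$ to the identity $[\Gamma\times z]=0$ and using its compatibility with exterior products (which comes directly from Bloch's construction of cycle classes for higher Chow groups), I obtain
\[
p^{\ast}\cl_{X}([\Gamma])\cup q^{\ast}\cl_{Y}([z])=0\quad\text{in}\quad H^{2(c_{1}+c_{2})-(p_{1}+p_{2})}((X\times Y)_{\et},\Z/n(c_{1}+c_{2})),
\]
where $p\colon X\times Y\to X$ and $q\colon X\times Y\to Y$ denote the canonical projections.

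Next, since $X$ is smooth projective of dimension $d$ over the algebraically closed field $k$ and $n\in k^{\ast}$, Poincar\'e duality gives a perfect pairing of finite $\Z/n$-modules
\[
H^{2c_{1}-p_{1}}(X_{\et},\Z/n(c_{1}))\times H^{2d-2c_{1}+p_{1}}(X_{\et},\Z/n(d-c_{1}))\xrightarrow{\cup} H^{2d}(X_{\et},\Z/n(d))\xrightarrow{\deg}\Z/n.
\]
Using the non-vanishing of $\cl_{X}([\Gamma])$, I would pick a Poincar\'e dual $\beta\in H^{2d-2c_{1}+p_{1}}(X_{\et},\Z/n(d-c_{1}))$ with $\deg(\beta\cup\cl_{X}([\Gamma]))=1$ in $\Z/n$.

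I then conclude by cupping the vanishing identity with $p^{\ast}\beta$ and pushing forward along the proper morphism $q$. The projection formula together with the well-known identity $q_{\ast}\circ p^{\ast}=\deg(-)\cdot 1_{Y}$ on top-degree cohomology of $X$ yield
\[
0=q_{\ast}\bigl(p^{\ast}(\beta\cup\cl_{X}([\Gamma]))\cup q^{\ast}\cl_{Y}([z])\bigr)=\deg(\beta\cup\cl_{X}([\Gamma]))\cdot\cl_{Y}([z])=\cl_{Y}([z]),
\]
so that $[z]\in\ker\cl_{Y}=F_{n}^{c_{2},\,2c_{2}-p_{2}}(Y)$, as claimed.

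The delicate point that must be verified is the existence of a Poincar\'e dual $\beta$ pairing to a \emph{unit} rather than merely a non-zero value in $\Z/n$. For $n$ prime this is immediate since $\Z/n$ is then a field, but for composite $n$ it amounts to the claim that $\cl_{X}([\Gamma])$ generates a $\Z/n$-direct summand of the finite $\Z/n$-module $H^{2c_{1}-p_{1}}(X_{\et},\Z/n(c_{1}))$. This primitivity, read off from the non-vanishing hypothesis, is the main structural input needed for the argument; without it, the projection-formula step would only give that $\cl_{Y}([z])$ is annihilated by some proper divisor of $n$.
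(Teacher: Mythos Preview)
Your approach coincides with the paper's: apply the cycle class map to the vanishing $[\Gamma\times z]=0$, then use Poincar\'e duality on the proper factor $X$ together with the projection formula along $q$ to recover $\cl_{Y}([z])$. The paper writes this in two lines and simply asserts the existence of a class $\overline{\cl_{X}([\Gamma])}$ with $\overline{\cl_{X}([\Gamma])}\cup\cl_{X}([\Gamma])=\cl_{X}^{d}(\mathrm{pt})$, without further comment.

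You are right to flag the ``delicate point'', and in fact your worry is not resolvable from the stated hypothesis: non-vanishing of $\cl_{X}([\Gamma])$ in $\Z/n$-cohomology does \emph{not} imply the existence of a dual class pairing to a unit when $n$ is composite. Worse, the lemma as stated is false without the stronger hypothesis that $\cl_{X}([\Gamma])$ has order exactly $n$. For a counterexample take $n=4$, $X=Y=E$ an elliptic curve over $k$, $c_{1}=c_{2}=p_{1}=p_{2}=1$; then $\CH^{1}(E,1;\Z/4)\cong H^{1}(E,\mu_{4})\cong(\Z/4)^{2}$, and if $g$ denotes a generator of one summand and we set $[\Gamma]=[z]=2g$, both cycle classes are non-zero (of order $2$), yet $[\Gamma\times z]=4(g\times g)=0$. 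So the paper's proof has exactly the gap you isolated. This does not affect the paper's applications: in Theorems~\ref{thm:ext-prod-elliptic} and~\ref{thm:main-general-field} the class $[\Gamma]$ is taken to generate a free $\Z/n$-submodule on which the cycle class map is injective, so $\cl_{X}([\Gamma])$ has order $n$ there and the Poincar\'e-dual-to-a-unit step is legitimate. The clean fix is to add ``of order $n$'' to the hypothesis on $\cl_{X}([\Gamma])$.
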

\begin{proof} Indeed, let $[z]\in \CH^{c_{2}}(Y,p_{2}\ ;\Z/n),$ such that $[\Gamma\times z]=0\in\CH^{c_{1}+c_{2}}(X\times Y, p_{1}+p_{2}\ ;\Z/n)$. Then we have the equality $$0=\cl_{X\times Y}([\Gamma\times z])=p^{\ast}\cl_{X}([\Gamma])\cup q^{\ast}\cl_{Y}([z])\in H^{2(c_{1}+c_{2})-(p_{1}+p_{2})}(X\times Y,\Z/n(c_{1}+c_{2})),$$where $p:X\times Y\to X$ and $q:X\times Y\to Y$ are the natural projections. The class $\cl_{X}([\Gamma])\neq0$ is non-zero and thus, we can consider its Poincar\'e dual $$\overline{\cl_{X}([\Gamma])}\in H^{2(d-c_{1})+p_{1}}(X_{\et},\Z/n(d-c_{1})),$$ i.e., $\overline{\cl_{X}([\Gamma])}\cup \cl_{X}([\Gamma])=\cl^{d}_{X}(pt)$ is the class of a point, where $d:=\dim X$; see \cite[Chapter VI, Corollary 11.2]{milne}. We then see that the class $\cl_{Y}([z])$ can be recovered as $$q_{\ast}(p^{\ast}\cl_{X}^{d}(pt)\cup q^{\ast}\cl_{Y}([z]))=q_{\ast}(p^{\ast}\overline{\cl_{X}([\Gamma])}\cup p^{\ast}\cl_{X}([\Gamma])\cup q^{\ast}\cl_{Y}([z])).$$
This in turn implies that $\cl_{Y}([z])=0,$ as we wanted. The proof of the lemma is complete.
\end{proof}

\begin{theorem}\label{thm:ext-prod-elliptic} Let $k$ be an algebraically closed field and let $k\subset K$ be an algebraically closed field extension of positive transcendence degree. Let $E_{K}$ be an elliptic curve over $K$ whose $J$ invariant is transcendental over $k$. Let $n\geq2$ be an integer invertible in $k$. Then for every free $\Z/n$-submodule $M\subset \CH^{1}(E_{K},1\ ; \Z/n)$ of rank one, the following holds: For any smooth projective variety $Y$ over $k$, with base change $Y_{K}:=Y\times_{k}K,$ the exterior
product map
\begin{equation}\label{eq:ext-prod-elliptic}
    M\otimes \CH^{c}(Y,p\ ;\Z/n)\longrightarrow \CH^{c+1}(E_{K}\times Y_{K},p+1\ ;\Z/n),\ [\Gamma]\otimes[z]\mapsto [\Gamma\times z_{K}]
\end{equation}
 is injective for all integers $c,p\geq0$.   
\end{theorem}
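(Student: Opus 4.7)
My strategy is to reduce the claim to Theorem \ref{thm:injectivity-CH^d(X,1;Z/n)} applied to $X=E_K$ (of dimension $d=1$), via the comparison between the exterior product on higher Chow groups and the refined unramified cohomology pairing. First, I would pick a generator $z_0$ of $M$; since $M$ is free of rank one, $z_0$ has order exactly $n$ in $\CH^1(E_K,1;\Z/n)\cong H^1(E_K,\mu_n)\cong E_K[n]$ (via Lemma \ref{lem:bound_mod_n}\eqref{it:bound_mod_n}), and in particular $\cl_{E_K}(z_0)\neq 0$. It suffices to show that $\psi\colon \CH^c(Y,p;\Z/n)\to \CH^{c+1}(E_K\times Y_K,p+1;\Z/n)$, $[\beta]\mapsto[z_0\times\beta_K]$, is injective. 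For any $[\beta]\in\ker\psi$, Lemma \ref{lem:kernel-exterior-product} yields $\cl_{Y_K}([\beta_K])=0$; smooth base change along the algebraically closed extension $k\subset K$ then forces $\cl_Y([\beta])=0$, so $[\beta]\in F^{c,2c-p}_n(Y)$. When $c\le p+1$, Theorem \ref{thm:Licthenbaum-Beilinson-conjecture}\eqref{it:L-B} and Lemma \ref{lem:bound_mod_n}\eqref{it:bound_mod_n} make $\cl_Y$ injective, so $F^{c,2c-p}_n(Y)=0$ and we are immediately done.

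I would then focus on $c\ge p+2$, so that $r:=c-p-2\ge 0$. Lemma \ref{lem:ker_cl} identifies $F^{c,2c-p}_n(Y_K)$ with $H^{2c-p-1}_{c-p-2,nr}(Y_K,\Z/n(c))/H^{2c-p-1}(Y_K,\Z/n(c))$, which coincides with $F^{c,2c-p}_n(Y)$ via the base-change isomorphism on refined unramified cohomology. Because $\cl_{Y_K}(\beta_K)=0$ forces the cycle class of $z_0\times\beta_K$ to vanish, $[z_0\times\beta_K]=0$ in $\CH^{c+1}$ is equivalent to its vanishing in $F^{c+1,2c-p+1}_n(E_K\times Y_K)$. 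By Corollary \ref{cor:pairings-comparison}, the exterior product on these $F$-groups is identified with the refined unramified pairing \eqref{eq:pairing-refined}, which for $X=E_K$, $d=1$, $i=2c-p-1$ and $j=c$ is precisely the map $\Lambda_1$ of Theorem \ref{thm:injectivity-CH^d(X,1;Z/n)}. Applying the relaxed form of \ref{it:P'1} and \ref{it:P'2} given in Remark \ref{remark:P'1-P'2}, the injectivity of $\Lambda_1$ for our rank-one $M$ boils down to producing a projective flat family $\mathcal{E}\to\Spec R$, with $R=\mathcal{O}_{C,0}$ for some smooth $k$-curve $C$ and $0\in C(k)$, whose geometric generic fibre is $E_K$ and which satisfies: (i) every component of the central fibre $\mathcal{E}_0$ is isomorphic to $\mathbb{P}^1_k$ (so that \ref{it:P'2} holds trivially, as $H^1(\mathbb{P}^1_k,\Z/n)=0$), and (ii) the Poincar\'e dual $\overline{\cl_{E_K}(z_0)}\in H^1(E_K,\Z/n)$ lifts to $H^1(\mathcal{E},\Z/n)$.

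To construct such a family I would exploit the transcendence of $j(E_K)$ over $k$: realise $E_K$ as a Tate curve $\mathbb{G}_m/q^\Z$ with respect to a valuation $v$ on $K$ trivial on $k$, with $v(q)=n$, and arrange moreover that $z_0$ lies in the $\mu_n$-summand of the induced splitting $E_K[n]=\mu_n\oplus\langle q^{1/n}\rangle$. This is possible because $SL_2(\Z/n)$ acts transitively on primitive vectors of $E_K[n]\cong(\Z/n)^2$: after spreading $(E_K,z_0)$ out over the modular curve $Y_1(n)$, one can pass through a cusp at which the Tate uniformization has the marked $n$-torsion placed in the $\mu_n$-direction. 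The resulting family $\mathcal{E}\to\Spec R$ has central fibre the $I_n$ N\'eron polygon, hence $n$ components each isomorphic to $\mathbb{P}^1_k$, confirming (i). For (ii), the Poincar\'e dual of $z_0\in\mu_n$ classifies the \'etale cover $\mathbb{G}_m/q^{n\Z}\to \mathbb{G}_m/q^\Z$, whose Tate model yields an \'etale cover $\tilde{\mathcal{E}}\to\mathcal{E}$ inducing the unique connected cyclic cover $I_{n^2}\to I_n$ on central fibres; this provides the desired lift.

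The hard part will be the construction of the degeneration: for an arbitrary primitive $z_0$ one must exhibit a model of $E_K$ over a smooth $k$-curve whose central fibre consists of projective lines and across which the specific $\Z/n$-cover classified by the Poincar\'e dual of $z_0$ extends \'etally. The transcendence of $j(E_K)$ is indispensable here, as it provides the freedom to choose both the valuation and the Tate uniformization so as to align $z_0$ with the $\mu_n$-direction; the cleanest way to make this rigorous is via the universal family over a compactification of $Y_1(n)$ and its cuspidal fibres.
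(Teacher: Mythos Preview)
Your overall strategy matches the paper's exactly: reduce to injectivity on the $F$-groups via Lemma~\ref{lem:kernel-exterior-product}, identify these with refined unramified cohomology via Corollary~\ref{cor:pairings-comparison}, and then invoke Theorem~\ref{thm:injectivity-CH^d(X,1;Z/n)} (in the relaxed form of Remark~\ref{remark:P'1-P'2}) after producing a degeneration of $E_K$ whose special fibre has only $\mathbb{P}^1$-components and across which a Poincar\'e dual of $z_0$ extends. The only substantive difference is in how you build that degeneration. The paper fixes a Poincar\'e dual $\overline{\cl_{E_K}(z_0)}$, realises the associated cover as $E_K\cong\tilde{E}_K/\tau$, takes the relatively minimal semistable model of $\tilde{E}_K$ over a curve $C$, and then invokes \cite[Lemma~2.7]{schoen-products} to locate a closed point $x_0\in C$ where the fibre has type $I_{nm}$ with $\tau$ meeting the component $R_{rm}$ for some $\gcd(r,n)=1$; the quotient $\tilde{\mathcal{E}}/\tau$ over $\mathcal{O}_{C,x_0}$ then has special fibre $I_m$ and the $\Z/n$-cover is trivial on components. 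Your route via Tate uniformisation and cusps of $X_1(n)$ is a legitimate alternative and arguably more conceptual---it produces an $I_n$ special fibre directly and reads off the extending cover as $\mathbb{G}_m/q^{n\Z}\to\mathbb{G}_m/q^{\Z}$---but the paper's approach has the advantage of resting on a single clean citation rather than several modular-curve facts (existence of a cusp where the marked torsion lies in the $\mu_n$-direction, and the precise identification of the Poincar\'e dual with that specific cover) that you leave only sketched.

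One point you should make explicit: Theorem~\ref{thm:injectivity-CH^d(X,1;Z/n)} requires $R=\mathcal{O}_{C,x_0}$ for a smooth $k$-curve $C$ and $K=\overline{\Frac(R)}$, so it only applies when $\operatorname{trdeg}(K/k)=1$. Your ``valuation $v$ on $K$'' and ``spreading over $Y_1(n)$'' implicitly assume this, but for general $K$ you must first reduce to $K=\overline{k(J)}$ via Jannsen's rigidity theorem \cite[Theorem~0.3]{jannsen-r}, exactly as the paper does in its opening sentence.
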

\begin{proof} A rigidity theorem of Jannsen implies that it suffices to consider only the case where the field $K$
is an algebraic closure of $k(J)$; see \cite[Theorem 0.3]{jannsen-r}.\par
We want to apply Theorem \ref{thm:injectivity-CH^d(X,1;Z/n)}. Let $M\subset \CH^{1}(E_{K},1\ ; \Z/n)$ be a free $\Z/n$-submodule of rank one. Recall that we have a natural isomorphism $\CH^{1}(E_{K},1\ ; \Z/n)\cong H^{1}(E_{K},\Z/n)$ (see Theorem \ref{thm:Licthenbaum-Beilinson-conjecture} \eqref{it:L-B}) and we let $\tilde{M}\subset H^{1}(E_{K},\Z/n)$ be a $\Z/n$-submodule generated by some Poincar\'e dual of a generator of $M$. Note that $\tilde{M}$ as a $\Z/n$-module is also free of rank one and we pick a $\Z/n$-Galois covering $\tilde{E}_{K}\to E_{K}$ whose class in $H^{1}(E_{K},\Z/n)$ generates $\tilde{M}$; see Remark \ref{remark:Intuition-for-P1-P2}.\par
We may write $E_{K}\cong\tilde{E}_{K}/\tau$ for some torsion point $\tau\in \tilde{E}_{K}[n]$ of order $n$. Let $k(J)\subset F\subset K$ be a finite intermediate field extension such that the $n$-torsion points of $\tilde{E}_{K}$ are defined over $F$, i.e., $\tilde{E}_{F}[n]\cong (\Z/n)^{\oplus 2}$. Let $C$ be a smooth projective curve over $k$ whose generic point gives the field extension $k\subset F$. We consider the relatively minimal model $p: \mathcal{\tilde{E}}\to C$ of the curve $\tilde{E}_{F}$. In fact, we may choose the field $F$ above, in such a way that $p: \mathcal{\tilde{E}}\to C$ is semi-stable and that every singular fibre has Kodaira type $I_{nm}$ for some integer $m\geq2$; see \cite[VII.5.4]{silverman}. The identity element $s\in \tilde{E}_{F}$ extends to a section $s$ of $p$, which we call the identity section. The $n$-torsion point $\tau\in \tilde{E}_{F}[n]$ also extends to a section of $p$, which we denote again by the same symbol.\par
Recall that the set of reduced components of a fiber in a relatively minimal
elliptic model with a section inherits a group structure from the N\'eron model of
the generic fiber. In the case of a fiber of Kodaira type $I_{N}$ this component
group is isomorphic to $\Z/N$. We label the components $R_{i},$ with $i\in\{0,1,\ldots, N-1\}$, so that the section $s$ meets $R_{0}$ and the intersections $R_{i}\cdot R_{i'}$ are given by 
\begin{itemize}
    \item $R_{i}\cdot R_{i}=-2$, 
    \item $R_{i}\cdot R_{i'}=0$ if  $|i-i'|>1$ and $|i-i'|\neq N-1$,
    \item $R_{i}\cdot R_{i'}=1$ if $|i-i'|=1$ or $|i-i'|=N-1$.
\end{itemize}
Then an isomorphism from the component group to $\Z/N$ is given by $R_{i}\mapsto i\in \Z/N$.\par
We find a point $x_{0}\in C(k)$ such that the fibre $p^{-1}(x_{0})$ has Kodaira type $I_{nm}$ for some $m\geq 2$ and such that the unique component of the fibre $p^{-1}(x_{0})$ meeting $\tau$ has the form $R_{rm}$, with $\gcd(r,n)=1$; see \cite[Lemma 2.7]{schoen-products}. We then observe that the family $\tilde{\mathcal{E}}/\tau\to\Spec\mathcal{O}_{C,x_{0}}$ is a semi-stable model of the elliptic curve $E_{K}$ and has the following properties: Its central fibre has Kodaira type $I_{m},$ with $m\geq2,$ and the $\Z/n$-Galois covering $\tilde{\mathcal{E}}\to \tilde{\mathcal{E}}/\tau$ is trivial over each component of the central fibre.\par
 We can thus apply Theorem \ref{thm:injectivity-CH^d(X,1;Z/n)} for the particular choice of $M$ (see Remark \ref{remark:P'1-P'2} and Remark \ref{remark:Intuition-for-P1-P2}) and obtain that the exterior product maps 
\begin{equation*}
    \Lambda:M\otimes \frac{H^{2c-p-1}_{c-p-2,nr}(Y,\Z/n(c))}{H^{2c-p-1}(Y,\Z/n(c))} \longrightarrow M\otimes \frac{H^{2c-p-1}_{c-p-2,nr}(Y_{K},\Z/n(c))}{H^{2c-p-1}(Y_{K},\Z/n(c))}\overset{\eqref{eq:pairing}}{\longrightarrow} \frac{H^{2c-p}_{c-p-2,nr}(E_{K}\times_{K} Y_{K},\Z/n(1+c))}{H^{2c-p}(E_{K}\times_{K} Y_{K},\Z/n(1+c))}
\end{equation*}
are injective for all $c,p\geq0$. These maps in turn naturally identify with the exterior product maps
\begin{equation}\label{eq:ext-prod-F-b}
 M\otimes F^{c,2c-p}_{n}(Y)\longrightarrow F^{c+1,2c-p+1}_{n}(E_{K}\times_{K}Y_{K}),[\Gamma]\otimes [z]\mapsto [\Gamma\times z_{K}]
\end{equation}
where $F^{c,2c-p}_{n}(Y):=\ker(\cl_{Y}:\CH^{c}(Y,p\ ;\Z/n)\to H^{2c-p}(Y_{\et},\Z/n(c)))$; see Corollary \ref{cor:pairings-comparison}. Finally, we make use of Lemma \ref{lem:kernel-exterior-product} and conclude that the exterior product maps
\eqref{eq:ext-prod-elliptic} are injective for all integers $c,p\geq0$, since the exterior product maps \eqref{eq:ext-prod-F-b} are. This proves Theorem \ref{thm:ext-prod-elliptic}.
\end{proof}

As a consequence of Theorem \ref{thm:ext-prod-elliptic}, we obtain the following result.

\begin{corollary}\label{cor:ext-prod-elliptic} Let $Y$ be a smooth complex projective variety. Then there is a smooth complex elliptic curve $E$ such that the following holds: For each integer $n\geq2,$ and every free $\Z/n$-submodule $M\subset L^{1}H^{1}(E)/n$ of rank one, the exterior product maps
\begin{equation}\label{eq:ext-prod-elliptic-L}
    M\otimes L^{c}H^{2c-p}(Y)/n\longrightarrow L^{c+1}H^{2c+1-p}(E\times Y)/n,\ \alpha\otimes\beta\mapsto [\alpha\times \beta]
\end{equation}
 are injective for all integers $c,p\geq0$.   
\end{corollary}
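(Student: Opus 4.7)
The plan is to reduce Corollary \ref{cor:ext-prod-elliptic} to Theorem \ref{thm:ext-prod-elliptic} by combining a spreading-out of $Y$, the Suslin--Voevodsky comparison theorem between morphic cohomology and Bloch's higher Chow groups, and Jannsen's rigidity.

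First I would choose a countable algebraically closed subfield $k\subset\C$ such that $Y=Y_0\times_k\C$ for some smooth projective $Y_0/k$, and then pick an elliptic curve $E/\C$ whose $J$-invariant is transcendental over $k$. Such an $E$ exists because $k$ is countable while $\C$ has uncountable transcendence degree over $k$; this $E$ will be the elliptic curve promised by the corollary.

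The Bockstein sequence for morphic cohomology gives natural inclusions $L^{c}H^{2c-p}(Y)/n\hookrightarrow L^{c}H^{2c-p}(Y,\Z/n)$, $L^{c+1}H^{2c+1-p}(E\times Y)/n\hookrightarrow L^{c+1}H^{2c+1-p}(E\times Y,\Z/n)$ and $L^{1}H^{1}(E)/n\hookrightarrow L^{1}H^{1}(E,\Z/n)$. Composing each with the Suslin--Voevodsky isomorphism from Theorem \ref{thm:suslin-voevodsky}\eqref{it:S-V} yields inclusions into Bloch higher Chow groups with $\Z/n$-coefficients. In particular, any free rank-one $\Z/n$-submodule $M\subset L^{1}H^{1}(E)/n$ identifies with a free rank-one $\Z/n$-submodule $M'\subset \CH^{1}(E,1\ ;\Z/n)$. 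Applying Theorem \ref{thm:ext-prod-elliptic} with base field $k$, extension $K=\C$, elliptic curve $E$ and variety $Y_0$ then shows that
\[
M'\otimes \CH^{c}(Y_0,p\ ;\Z/n)\longrightarrow \CH^{c+1}(E\times Y,p+1\ ;\Z/n)
\]
is injective for all $c,p\geq 0$. Jannsen's rigidity \cite[Theorem 0.3]{jannsen-r} then identifies the source with $M'\otimes \CH^{c}(Y,p\ ;\Z/n)$ via the base-change isomorphism, yielding injectivity with $Y$ in place of $Y_0$.

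It remains to assemble the above data into a commutative square whose bottom row is the injective map just established, whose top row is the morphic exterior product map of the corollary, and whose vertical arrows are the inclusions coming from Bockstein and Suslin--Voevodsky; injectivity of the top row then follows formally. The main point to verify is that Jannsen's rigidity genuinely applies to higher Chow groups with $\Z/n$-coefficients under algebraically closed extensions of the base field, and not only to ordinary Chow groups; this is the same rigidity already invoked in the proof of Theorem \ref{thm:ext-prod-elliptic}, so no further input is required.
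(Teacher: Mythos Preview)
Your proposal is correct and follows essentially the same route as the paper: spread $Y$ out over a countable algebraically closed $k\subset\C$, take an elliptic curve with transcendental $J$-invariant (the paper uses the Legendre curve), apply Theorem~\ref{thm:ext-prod-elliptic} with $K=\C$, invoke Jannsen's rigidity for $\CH^c(Y_0,p;\Z/n)\cong\CH^c(Y,p;\Z/n)$, and then pass to morphic cohomology via the Bockstein inclusion and the Suslin--Voevodsky comparison. The only cosmetic difference is that the paper additionally observes $L^1H^1(E)/n\cong L^1H^1(E,\Z/n)$ (since $L^1H^2(E)\cong\Z$ is torsion-free), whereas you only use the injection, which already suffices.
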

\begin{proof} We may find a countable algebraically closed field $k\subset\C$ over which $Y$ is defined, i.e., $Y=Y_{0}\times_{k}\C$. We then consider the Legendre elliptic curve $E_{k(t)}$, which is defined over $k(t),$ i.e., the function field of $\mathbb{P}^{1}_{k}$. The inclusion $k\hookrightarrow\C$ factors through $k(t)\hookrightarrow \C$ and we can thus apply Theorem \ref{thm:ext-prod-elliptic} for $K=\C$. That is, for each $n\geq2$ and every free $\Z/n$-submodule $M\subset \CH^{1}(E,1\ ; \Z/n)$ of rank one, we find that the exterior product maps
$$M\otimes \CH^{c}(Y_{0},p\ ;\Z/n)\overset{\eqref{eq:ext-prod-elliptic}}{\longrightarrow} \CH^{c+1}(E\times Y,p+1\ ;\Z/n)$$
are injective for all $c,p\geq0$, where we set $E:=E_{k(t)}\times_{k(t)}\C$. The rigidity theorem of Jannsen \cite[Theorem 0.3]{jannsen-r} implies that the base change map $\CH^{c}(Y_{0},p\ ;\Z/n)\to \CH^{c}(Y,p\ ;\Z/n)$ is an isomorphism. Moreover, note that we have a natural isomorphism $\CH^{1}(E,1\ ;\Z/n)\cong L^{1}H^{1}(E)/n$. This follows from the canonical isomorphism $\CH^{1}(E,1\ ;\Z/n)\cong L^{1}H^{1}(E,\Z/n)$ (see Theorem \ref{thm:suslin-voevodsky} \eqref{it:S-V}) along with the observation that the canonical injection $L^{1}H^{1}(E)/n\hookrightarrow L^{1}H^{1}(E,\Z/n)$ is, in fact, an isomorphism, since $L^{1}H^{2}(E)=\CH^{1}(E)/\sim_{\alg}\cong\Z$ is torsion-free. The comparison theorem between morphic cohomology and higher Chow groups for finite coefficients in turn implies that the exterior product maps $$M\otimes \CH^{c}(Y,p\ ;\Z/n)\overset{\eqref{eq:ext-prod-elliptic}}{\longrightarrow} \CH^{c+1}(E\times Y,p+1\ ;\Z/n)$$
identify with the maps $$M\otimes L^{c}H^{2c-p}(Y,\Z/n)\overset{\times}{\longrightarrow} L^{c+1}H^{2c+1-p}(E\times Y,\Z/n);$$ see Theorem \ref{thm:suslin-voevodsky} \eqref{it:S-V}. Finally, the injectivity of the latter clearly implies the injectivity of the exterior product maps \eqref{eq:ext-prod-elliptic-L}, as we wanted. The proof of Corollary \ref{cor:ext-prod-elliptic} is complete.
\end{proof}
We finally see that Theorem \ref{thm:general-totaro-result} and Corollary \ref{cor:general-totaro-result} are concluded from Corollary \ref{cor:ext-prod-elliptic}, as follows. 
\begin{proof}[Proof of Theorem \ref{thm:general-totaro-result}] We proceed by induction on $p\geq0$. If $p=0$, then $L^{2}H^{4}(X)/n\cong\CH^{2}(X)/n$ and one can take, for instance, $X=JC$, as the Jacobian of a very general genus 3 curve $C$; see \cite[Corollary 0.1]{totaro-annals}. Let $p\geq 1$. The induction hypothesis gives a smooth complex projective variety $Y$ of dimension $p+2$ such that $L^{p+1}H^{p+3}(Y)/n$ contains infinitely many elements of order $n$ for all $n\geq2$. Now, Corollary \ref{cor:ext-prod-elliptic} yields a smooth complex elliptic curve $E$ such that the following holds: For each $n\geq2$, and every free $\Z/n$-submodule $M\subset L^{1}H^{1}(E)/n$ of rank one, the exterior product map 
$$ M\otimes L^{p+1}H^{p+3}(Y)/n\overset{\eqref{eq:ext-prod-elliptic-L}}{\longrightarrow} L^{p+2}H^{p+4}(E\times Y)/n$$
is injective. Thus, for all $n\geq2$, the group $L^{p+2}H^{p+4}(E\times Y)/n$ contains infinitely many elements of order $n$, as the group $L^{p+1}H^{p+3}(Y)/n$ does. The proof of Theorem \ref{thm:general-totaro-result} is complete.\end{proof}

\begin{proof}[Proof of Corollary \ref{cor:general-totaro-result}] Let $p\geq0$ and $d\geq p+3$. Then Theorem \ref{thm:general-totaro-result} yields a smooth complex projective variety $Y$ of dimension $p+3$ such that the group $L^{p+2}H^{p+4}(Y)/n$ contains infinitely many elements of order $n$ for all $n\geq2$. Then the claim follows from the projective bundle formula applied to the $d$-fold $X:=Y\times\mathbb{P}^{d-(p+3)}_{\C}$; see \cite[Proposition 2.5]{FG93}. The proof of Corollary \ref{cor:general-totaro-result} is complete.
\end{proof}

\begin{remark}\label{rem:totaro-example} Let $n\geq 2$ be an integer. It was crucial for the proof of Theorem \ref{thm:general-totaro-result} an example of a smooth complex projective $3$-fold $Y,$ such that $\CH^{2}(Y)/n$ has infinitely many cycles of order $n$. Instead of Totaro's example \cite{totaro-annals}, we could use any of \cite{Diaz,scavia}. In particular, \cite{scavia} yields examples defined over an algebraic closure of $\mathbb{Q}(t)$.
\end{remark}

\subsection{Examples with infinitely many $\ell^{\infty}$-torsion higher Chow cycles modulo a prime $\ell$} In this final subsection, we prove Theorems \ref{thm:main} and \ref{thm:injectivity-result-intro} together with Corollary \ref{cor:main}. Theorem \ref{thm:injectivity-result-intro} will be deduced from the following result. 

\begin{theorem}\label{thm:main-general-field} Let $k$ be an algebraically closed field and let $n\geq2$ be an integer invertible in $k$. Let $k\subset K$ be an algebraically closed field extension of positive transcendence degree. Consider the degeneration $\mathcal{S}\to\Spec\mathcal{O}_{\mathbb{A}^{1}_{k},0},$ which is constructed in Theorem \ref{thm:degeneration}, and let $S_{K}:=\mathcal{S}\times_{\mathcal{O}_{\mathbb{A}^{1}_{k},0}}K$ be its geometric generic fibre. Then for every smooth projective variety $Y$ over $k$, with base change $Y_{K}:=Y\times_{k}K$, the exterior product map  
\begin{equation}\label{eq:ext-prod-S-Z/n}
        \CH^{2}(S_{K},1)/n\otimes \CH^{c}(Y,p\ ;\Z/n)\longrightarrow \CH^{c+2}(S_{K}\times_{K}Y_{K},p+1\ ;\Z/n),\ [\Gamma]\otimes [z]\mapsto [\Gamma\times z_{K}]
\end{equation}
is injective for all $c,p\geq0$.
\end{theorem}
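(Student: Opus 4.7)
The plan is to realize $S_K$ as the geometric generic fibre of the degeneration supplied by Theorem~\ref{thm:degeneration}, verify the hypotheses of Theorem~\ref{thm:injectivity-CH^d(X,1)} for that family, and then translate the resulting injectivity statement from refined unramified cohomology back to higher Chow groups via Corollary~\ref{cor:pairings-comparison} and Lemma~\ref{lem:kernel-exterior-product}. First I would invoke Jannsen's rigidity theorem \cite{jannsen-r} to reduce to the case where $K$ is an algebraic closure of $k(t)=\Frac(\mathcal{O}_{\mathbb{A}^1_k,0})$; then $S_K$ becomes exactly the geometric generic fibre of the family $\mathcal{S}\to\Spec\mathcal{O}_{\mathbb{A}^1_k,0}$ of Theorem~\ref{thm:degeneration}. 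Set $X:=S_K$ (of dimension $d=2$) and $M:=\CH^{2}(S_K,1)/n$, which is free of rank one over $\Z/n$ by Proposition~\ref{pro:properties_S}\eqref{it:NS(S)} combined with Lemma~\ref{lem:properties}.

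Next I would check properties \ref{it:P1} and \ref{it:P2} (in the relaxed form of Remark~\ref{remark:P1-P2}). The $\Z/n$-Galois cover $\mathcal{T}\to\mathcal{S}$ from Theorem~\ref{thm:degeneration}\eqref{it:galois-covering} furnishes a class $\alpha\in H^{1}(\mathcal{S}_{\et},\Z/n)$. Its restriction $\alpha|_{S_K}$ is the class of the non-trivial cover $T\to S_K$ and so generates $\Pic(S_K)[n]\cong H^{1}(S_K,\Z/n)\cong\Z/n$ through the Kummer sequence. The Bockstein $\delta\colon H^{1}(S_K,\Z/n)\to H^{2}_{\cont}(S_K,\Z_{n})[n]$ is an isomorphism by Proposition~\ref{pro:properties_S}\eqref{it:Bockstein iso}, hence $\delta(\alpha|_{S_K})$ generates $H^{2}_{\cont}(S_K,\Z_{n})[n]$, verifying \ref{it:P1}. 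Property \ref{it:P2} is immediate from Theorem~\ref{thm:degeneration}\eqref{it:galois-covering}, since $\mathcal{T}\to\mathcal{S}$ becomes trivial on each component $S_{0i}\cong\mathbb{P}^{2}_{k}$ of the central fibre. Because $H^{1}(S_K,\Z/n)$ is cyclic of order $n$, $\alpha|_{S_K}$ automatically agrees, up to a unit in $(\Z/n)^{\ast}$, with a Poincar\'e dual of $\cl_{S_K}(z)$ for any generator $z$ of $M$.

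Theorem~\ref{thm:injectivity-CH^d(X,1)} then yields injectivity of the map $\Lambda_{1}$ for all $i,r\geq 0$. Setting $(i,r,j)=(2c-p-1,\,c-p-2,\,c)$ and invoking Corollary~\ref{cor:pairings-comparison} together with the identifications of Lemma~\ref{lem:ker_cl} translates this into injectivity of the exterior product map
\begin{equation*}
M\otimes F_{n}^{c,\,2c-p}(Y_{K})\longrightarrow F_{n}^{c+2,\,2c-p+3}(S_K\times_K Y_K)\subset\CH^{c+2}(S_K\times_K Y_K,\,p+1\ ;\Z/n),
\end{equation*}
where the inclusion uses the index check $2(c+2)-(2c-p+3)=p+1$. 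A further application of Jannsen's rigidity identifies $F_{n}^{c,\,2c-p}(Y)$ with $F_{n}^{c,\,2c-p}(Y_K)$. Finally, choosing a generator $[\Gamma]$ of $M$, Lemma~\ref{lem:CH^{d}(X,1)/n} ensures $\cl_{S_K}([\Gamma])\neq 0$, so Lemma~\ref{lem:kernel-exterior-product} shows that any $[z]\in\CH^{c}(Y,p\ ;\Z/n)$ with $[\Gamma\times z_{K}]=0$ already lies in $F_{n}^{c,\,2c-p}(Y)$; combined with the displayed injectivity, this forces $[z]=0$.

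The main obstacle is the index bookkeeping of the translation step: one has to carefully match the indices of $\Lambda_{1}$, a statement about refined unramified cohomology, with those of the exterior product on higher Chow cycles so that the target lands in $\CH^{c+2}(S_K\times_K Y_K,\,p+1\ ;\Z/n)$. Verifying that the single class $\alpha$ extracted from the degeneration simultaneously serves as the Poincar\'e dual required in \ref{it:P1} is the second delicate point, but it is automatic here due to the cyclic structure of $H^{1}(S_K,\Z/n)$. The rest is a direct assembly of the injectivity theorem of $\S\ref{sec:inj}$, the geometric construction of $\S\ref{sec:construction}$, and the translation machinery of $\S\ref{sec:pairings}$.
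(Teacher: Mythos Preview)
Your proposal is correct and follows essentially the same route as the paper: reduce to $K=\overline{k(t)}$ via Jannsen rigidity, verify \ref{it:P1}--\ref{it:P2} for the family of Theorem~\ref{thm:degeneration} using Proposition~\ref{pro:properties_S}\eqref{it:Bockstein iso} and Theorem~\ref{thm:degeneration}\eqref{it:galois-covering}, apply Theorem~\ref{thm:injectivity-CH^d(X,1)} with $M=\CH^{2}(S_K,1)/n$, translate $\Lambda_1$ into injectivity on $F_n^{c,2c-p}$ via Corollary~\ref{cor:pairings-comparison}, and finish with Lemma~\ref{lem:kernel-exterior-product}. Two minor remarks: the detour through Remark~\ref{remark:P1-P2} and the Poincar\'e--dual discussion is unnecessary, since $\delta(\alpha|_{S_K})$ already generates $H^{2}_{\cont}(S_K,\Z_n)[n]$ and \ref{it:P1} holds on the nose; and in your translation step the domain of $\Lambda_1$ is $M\otimes F_n^{c,2c-p}(Y)$ (over $k$) rather than $F_n^{c,2c-p}(Y_K)$, so the extra appeal to rigidity there is superfluous.
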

\begin{proof} A rigidity theorem of Jannsen implies that it suffices to consider only the case where the field $K$ is an algebraic closure of $k(t)$; see \cite[Theorem 0.3]{jannsen-r}.\par
We wish to apply Theorem \ref{thm:injectivity-CH^d(X,1)}. To this end, we need to show that the family $\mathcal{S}\to\Spec\mathcal{O}_{\mathbb{A}^{1}_{k},0}$ satisfies the properties \ref{it:P1} and \ref{it:P2}. Note that the geometric generic fibre $S_{K}$ of this family is an \'etale $\Z/n$-quotient of some smooth complete intersection $T_{K}$ of multidegree $(n,n,n)$ within $\mathbb{P}^{5}_{K}$.
The Bockstein map $\delta: H^{1}(S_{K},\Z/n)\to H^{2}_{\cont}(S_{K},\Z_{n})[n]\cong\Z/n$ is an isomorphism, and the generator of the cyclic group $ H^{1}(S_{K},\Z/n)\cong\Z/n$ is given by the $\Z/n$-Galois covering $T_{K}\to T_{K}/(\Z/n)=S_{K}$; see Proposition \ref{pro:properties_S} \eqref{it:Bockstein iso} and Remark \ref{remark:Intuition-for-P1-P2}. This in turn extends to a $\Z/n$-Galois covering $\mathcal{T}\to\mathcal{S},$ that is trivial over each component $S_{0i}$ of the central fibre $S_{0}=\mathcal{S}\times_{{O}_{\mathbb{A}^{1}_{k},0}}k$; see Theorem \ref{thm:degeneration} \eqref{it:galois-covering}. Thus, the class of $\mathcal{T}\to\mathcal{S}$ in $H^{1}(\mathcal{S}_{\et},\Z/n)$ satisfies both \ref{it:P1} and \ref{it:P2}. Note that here once you have \ref{it:P1}, then \ref{it:P2} is automatic, since all components of the central fibre $S_{0}$ are projective planes; see Theorem \ref{thm:degeneration} \eqref{it:S_0}.\par
Recall that $\CH^{2}(S_{K},1)/n\cong\Z/n$ is a free $\Z/n$-module of rank one; see Proposition \ref{pro:properties_S} \eqref{it:CH2(S,1)/n}. It follows from Theorem \ref{thm:injectivity-CH^d(X,1)} that for $M=\CH^{2}(S_{K},1)/n$ and any smooth projective variety $Y$ over $k$, with base change $Y_{K}=Y\times_{k}K,$
the exterior product maps 
\begin{equation*}
    \Lambda:M\otimes \frac{H^{2c-p-1}_{c-p-2,nr}(Y,\Z/n(c))}{H^{2c-p-1}(Y,\Z/n(c))} \longrightarrow M\otimes \frac{H^{2c-p-1}_{c-p-2,nr}(Y_{K},\Z/n(c))}{H^{2c-p-1}(Y_{K},\Z/n(c))}\overset{\eqref{eq:pairing}}{\longrightarrow} \frac{H^{2+2c-p}_{c-p-1,nr}(S_{K}\times_{K} Y_{K},\Z/n(2+c))}{H^{2+2c-p}(S_{K}\times_{K} Y_{K},\Z/n(2+c))}
\end{equation*}
are injective for all $c,p\geq0$. These maps in turn naturally identify with the exterior product maps
\begin{equation}\label{eq:ext-prod-F}
 \CH^{2}(S_{K},1)/n\otimes F^{c,2c-p}_{n}(Y)\longrightarrow F^{c+2,2c-p+3}_{n}(S_{K}\times_{K}Y_{K}),[\Gamma]\otimes [z]\mapsto [\Gamma\times z_{K}]
\end{equation}
where $F^{c,2c-p}_{n}(Y):=\ker(\cl_{Y}:\CH^{c}(Y,p\ ;\Z/n)\to H^{2c-p}(Y_{\et},\Z/n(c)))$; see Corollary \ref{cor:pairings-comparison}. Finally, the injectivity of the cycle class map $\cl_{S_{K}}:\CH^{2}(S_{K},1)/n\hookrightarrow H^{3}({S_{K}}_{\et},\Z/n(2))$ (see Lemma \ref{lem:bound_mod_n} \eqref{it:bound_mod_n}) together with the injectivity of the exterior product maps \eqref{eq:ext-prod-F} imply that the maps 
\begin{equation*}
    \CH^{2}(S_{K},1)/n\otimes\CH^{c}(Y,p\ ;\Z/n)\overset{\eqref{eq:ext-prod-S-Z/n}}{\longrightarrow} \CH^{c+2}(S_{K}\times_{K}Y_{K}, p+1\ ;\Z/n)
\end{equation*}
are also injective for all $c,p\geq0$; see Lemma \ref{lem:kernel-exterior-product}. This concludes the proof of Theorem \ref{thm:main-general-field}.
\end{proof}

\begin{proof}[Proof of Theorem \ref{thm:injectivity-result-intro}]
Let $n\geq 2$ be an integer and let $Y$ be a smooth complex projective variety. Then we can find a countable algebraically closed field $k\subset\C$ such that $Y$ is defined over $k,$ i.e., $Y=Y_{0}\times_{k}\C$. Consider the degeneration $\mathcal{S}\to\Spec \mathcal{O}_{\mathbb{A}^{1}_{k},0}$ constructed in Theorem \ref{thm:degeneration}. Since $k$ is a countable field, the extension $k\hookrightarrow\C$ factors through $k\subset k(t)$. Thus, Theorem \ref{thm:main-general-field} for $K=\C$ implies that the exterior product map   
\begin{equation}\label{eq:ext-prod-Z/n}
        \CH^{2}(S,1)/n\otimes \CH^{c}(Y,p\ ;\Z/n)\longrightarrow \CH^{c+2}(S\times_{\C}Y,p+1\ ;\Z/n)
\end{equation}
is injective for all $c,p\geq0$, where we set $S:=\mathcal{S}\times_{\mathcal{O}_{\mathbb{A}^{1}_{k},0}}\C$. It is important to note that above we have implicitly used that the base change map $\CH^{c}(Y_{0},p\ ;\Z/n)\to \CH^{c}(Y,p\ ;\Z/n)$ is an isomorphism; see \cite[Theorem 0.3]{jannsen-r}. Recall that the surface $S$ satisfies $\Pic(S)_{\tors}=\NS(S)_{\tors}\cong\Z/n$ and thus we obtain a canonical isomorphism $\CH^{2}(S,1)/n\cong L^{2}H^{3}(S)/n\cong\Z/n$; see Lemma \ref{lem:properties} \eqref{it:Pr-b}. Moreover, we find that the exterior product map \eqref{eq:ext-prod-Z/n} identifies with
\begin{equation}\label{eq:ext-prod-L-Z/n}
    L^2H^3(S)/n\otimes L^cH^{2c-p} (Y,\Z/n) \overset{\times}{\longrightarrow} L^{c+2}H^{2c-p+3} (S\times Y,\Z/n)
\end{equation}
for all $c,p\geq0$; see Theorem \ref{thm:suslin-voevodsky} \eqref{it:S-V}. Finally, note that we have a commutative diagram
\begin{equation*}
    \begin{tikzcd}
{L^2H^3(S)/n\otimes L^cH^{2c-p} (Y,\Z/n)} \arrow[r, "\eqref{eq:ext-prod-L-Z/n}"]                       & {L^{c+2}H^{2c-p+3} (S\times Y,\Z/n)}            \\
L^2H^3(S)/n\otimes L^cH^{2c-p} (Y) \arrow[u, hook, shift right] \arrow[r, "\eqref{eq:exterior-product}"] & L^{c+2}H^{2c-p+3} (S\times Y)/n, \arrow[u, hook]
\end{tikzcd}
\end{equation*}
where the vertical maps are injective. Hence, we see that the exterior product map \eqref{eq:exterior-product} is injective, since the exterior product map \eqref{eq:ext-prod-L-Z/n} is. The proof of Theorem \ref{thm:injectivity-result-intro} is complete.\end{proof}
The following is the analogue of Theorem \ref{thm:main} for morphic cohomology.
\begin{theorem}\label{thm:main-Lawson} For each $p\geq1$ and $n\geq2$, there is a smooth complex projective variety $X$ of dimension $p+4$ such that the group $L^{p+3}H^{p+6}(X)[n]/nL^{p+3}H^{p+6}(X)[n^2]$ contains infinitely many elements of order $n$.
\end{theorem}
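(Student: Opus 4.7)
The plan is to combine Theorem \ref{thm:injectivity-result-intro} with Theorem \ref{thm:general-totaro-result} applied one step lower, exploiting the fact that the distinguished class on the surface $S$ can be represented by an honest $n$-torsion cycle in order to upgrade infinitude modulo $n$ to infinitude of the subquotient $[n]/n[n^{2}]$. Let $S$ be the surface from Theorem \ref{thm:injectivity-result-intro}, so that $\Pic(S)_{\tors}\cong\Z/n$ and hence, by Lemma \ref{lem:properties} \eqref{it:Pr-b}, $L^{2}H^{3}(S)/n = L^{2}H^{3}(S)[n]/nL^{2}H^{3}(S)[n^{2}]\cong\Z/n$. Let $Y$ be the smooth complex projective $(p+2)$-fold supplied by Theorem \ref{thm:general-totaro-result} applied with $p-1$ in place of $p$ (which is legitimate since $p\geq 1$), so that $L^{p+1}H^{p+3}(Y)/n$ contains infinitely many elements of order $n$. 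Set $X := S\times_{\C}Y$, which is smooth complex projective of dimension $p+4$.

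Choose $\tilde{a}\in L^{2}H^{3}(S)[n]$ representing a generator of $L^{2}H^{3}(S)/n$, available by the equality in Lemma \ref{lem:properties} \eqref{it:Pr-b}. The exterior product map $\phi:L^{p+1}H^{p+3}(Y)\to L^{p+3}H^{p+6}(X)$, $\beta\mapsto\tilde{a}\times\beta$, takes values in $L^{p+3}H^{p+6}(X)[n]$ because $n(\tilde{a}\times\beta)=(n\tilde{a})\times\beta=0$. Since $\phi$ is additive and its image is annihilated by $n$, it descends to a well-defined homomorphism
\begin{equation*}
f : L^{p+1}H^{p+3}(Y)/n \longrightarrow \frac{L^{p+3}H^{p+6}(X)[n]}{nL^{p+3}H^{p+6}(X)[n^{2}]}.
\end{equation*}
Composing $f$ with the natural projection $g$ from its target into $L^{p+3}H^{p+6}(X)/n$ yields, under the identification $L^{2}H^{3}(S)/n\otimes L^{p+1}H^{p+3}(Y)=L^{p+1}H^{p+3}(Y)/n$ afforded by the generator $[\tilde{a}]$, precisely the exterior product map of Theorem \ref{thm:injectivity-result-intro} with $c=p+1$ and with the $p$-parameter of that theorem replaced by $p-1$. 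Theorem \ref{thm:injectivity-result-intro} therefore guarantees that $g\circ f$ is injective, and hence so is $f$.

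Since $f$ is an injective homomorphism of $\Z/n$-modules, it preserves the order of elements: if $\alpha$ has order $n$ then $df(\alpha)=f(d\alpha)\neq 0$ for every proper divisor $d$ of $n$. Thus the infinitely many order-$n$ classes in $L^{p+1}H^{p+3}(Y)/n$ produced by Theorem \ref{thm:general-totaro-result} map to infinitely many order-$n$ classes in $L^{p+3}H^{p+6}(X)[n]/nL^{p+3}H^{p+6}(X)[n^{2}]$, proving the theorem. The main conceptual point, which is the only genuine obstacle beyond bookkeeping of indices, is the upgrade from the "$/n$" conclusion of Theorem \ref{thm:injectivity-result-intro} to the "$[n]/n[n^{2}]$" statement, made possible by choosing the generator of $L^{2}H^{3}(S)/n$ to be $n$-torsion on the nose.
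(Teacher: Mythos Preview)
Your argument is correct and follows essentially the same route as the paper: pick $Y$ from Theorem \ref{thm:general-totaro-result} (shifted to $p-1$), pick $S$ from Theorem \ref{thm:injectivity-result-intro}, use Lemma \ref{lem:properties}\eqref{it:Pr-b} to represent the generator of $L^{2}H^{3}(S)/n$ by a genuine $n$-torsion class, and then observe that the exterior product therefore factors through $L^{p+3}H^{p+6}(X)[n]/nL^{p+3}H^{p+6}(X)[n^{2}]$ while remaining injective by Theorem \ref{thm:injectivity-result-intro}. One small presentational point: in Theorem \ref{thm:injectivity-result-intro} the surface $S$ is produced \emph{after} the variety $Y$ is given, so you should introduce $Y$ first and then invoke Theorem \ref{thm:injectivity-result-intro} to obtain $S$; the logic is otherwise unchanged.
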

\begin{proof} Let $p\geq1$ and $n\geq2$. Then Theorem \ref{thm:general-totaro-result} yields a smooth complex projective variety $Y$ of dimension $p+2$ such that the group $L^{p+1}H^{p+3}(Y)/n$ contains infinitely many elements of order $n$ for all integers $n\geq2$. On the other hand, we can find a smooth complex projective surface $S$ with $\Pic(S)_{\tors}=\NS(S)_{\tors}\cong\Z/n$ such that the exterior product map 
$$L^{2}H^{3}(S)/n\otimes L^{p+1}H^{p+3}(Y)/n\overset{\eqref{eq:exterior-product}}{\longrightarrow} L^{p+3}H^{p+6}(S\times Y)/n$$
is injective; see Theorem \ref{thm:injectivity-result-intro}. Recall that we have an equality $L^{2}H^{3}(S)[n]/nL^{2}H^{3}(S)[n^2]=L^{2}H^{3}(S)/n$ and that the latter group is isomorphic to $\Z/n$; see Lemma \ref{lem:properties} \eqref{it:Pr-b}. All together imply that we have an injection 
$$L^{p+1}H^{p+3}(Y)/n \cong L^{2}H^{3}(S)/n\otimes L^{p+1}H^{p+3}(Y)/n\overset{\eqref{eq:exterior-product}}{\longrightarrow} L^{p+3}H^{p+6}(X)[n]/n L^{p+3}H^{p+6}(X)[n^2],$$ where we set $X:=S\times Y$. Hence, the group $L^{p+3}H^{p+6}(X)[n]/n L^{p+3}H^{p+6}(X)[n^2]$ has infinitely many elements of order $n$, as the group $L^{p+1}H^{p+3}(Y)/n$ also has. The proof of Theorem \ref{thm:main-Lawson} is complete.
\end{proof}
We can finally deduce Theorem \ref{thm:main} and Corollary \ref{cor:main}.
\begin{proof}[Proof of Theorem \ref{thm:main}] Let $p\geq1$ and $n\geq2$. Then Theorem \ref{thm:main-Lawson} gives a smooth complex projective variety $X$ of dimension $p+4$ such that the group $L^{p+3}H^{p+6}(X)[n]/n L^{p+3}H^{p+6}(X)[n^2]$ has infinitely many elements of order $n$. Thus, the claim follows from the canonical isomorphism
$$ \CH^{p+3}(X,p)[n]/n\CH^{p+3}(X,p)[n^2]\cong L^{p+3}H^{p+6}(X)[n]/n L^{p+3}H^{p+6}(X)[n^2];$$
see Theorem \ref{thm:suslin-voevodsky} \eqref{it:S-V'}. The proof of Theorem \ref{thm:main} is complete.\end{proof}

\begin{proof}[Proof of Corollary \ref{cor:main}] Let $p\geq1, d\geq p+4$ and $n\geq2$. Then Theorem \ref{thm:main} yields a smooth complex projective variety $Y$ of dimension $p+4$ such that the group $\CH^{p+3}(X,p)[n]/n\CH^{p+3}(X,p)[n^2]$ contains infinitely many cycles of order $n$. Thus, the result follows from the projective bundle formula applied to the $d$-fold $X:=Y\times\mathbb{P}^{d-(p+4)}_{\C}$; see \cite[Theorem 7.1]{bloch-motivic}. The proof of Corollary \ref{cor:main} is complete.\end{proof}

\begin{remark}\label{rem:trivial-abel-jacobi-invariant} Let $n\geq2$. Recall that for $p=1$, the smooth complex projective $5$-fold $X$ of Theorem \ref{thm:main} is given by the product $X:=S\times JC$ of the surface $S$ from Theorem \ref{thm:injectivity-result-intro}, with the Jacobian $JC$ of a very general quartic plane curve $C\subset\mathbb{P}^{2}_{\C}$. In this case the infinitely many cycles considered in $\CH^{4}(X,1)[n]$ all lie in the kernel of the natural map $\cl_{X}:\CH^{4}(X,1)\to H^{7}_{L}(X,\Z(4))$.\par Indeed, the infinitely many cycles in $\CH^{2}(JC)/n\cong L^2H^4(X)/n$ arise from pulling back the Ceresa cycle \cite{ceresa} by one of infinitely many distinct isogenies; see \cite{totaro-annals}. Hence, they are all nullhomologous and in particular lie in the kernel $E^{2}_{n}(JC):=\ker(\bar{\cl}_{JC}:\CH^{2}(JC)/n\to H^{4}(JC_{\et},\Z/n(2)))$. Now, for $[\Gamma]\in\CH^{2}(S,1)[n]$ and $[z]\in E^{2}_{n}(JC)$, one finds that the class $\cl_{S\times JC}([\Gamma\times z])=p^{\ast}\cl_{S}([\Gamma])\cup q^{\ast}\cl_{JC}([z])=0$ is zero, where $p:S\times JC\to S$ and $q:S\times JC\to JC$ are the natural projections. This is because $\cl_{S}([\Gamma])$ is $n$-torsion, while $\cl_{JC}([z])$ is divisible by $n$.
\end{remark}

\begin{remark}
  Let $p\geq1$ and $n\geq2$. Let $X$ be as in the proof of Theorem \ref{thm:main} (or Theorem \ref{thm:main-Lawson}). As in the case $p=1$ (see Remark \ref{rem:trivial-abel-jacobi-invariant}), one sees that the kernel of the cycle class map $\cl^{L}_{X}:\CH^{p+3}(X,p)[n]\to H^{p+6}_{L}(X,\Z(p))[n]$ is infinite, since the $n$-torsion subgroup of $H^{p+6}_{L}(X,\Z(p))$ is finite. However, if $p\geq2$, then our $n$-torsion cycles in $\CH^{p+3}(X,p)$ are no longer chosen canonically, that is, these are lifts of $n$-torsion classes in $L^{p+3}H^{p+6}(X)$. This in turn prevents us from computing their value via the above cycle map $\cl^{L}_{X}$. Instead, if one considers the cycle map $\cl^{B}_{X}:\CH^{p+3}(X,p)\to H^{p+6}_{B}(X,\Z(p))$, with values in Betti cohomology, then we find that the infinitely many $n$-torsion cycles in $\CH^{p+3}(X,p)$ all lie in the kernel of $\cl^{B}_{X}$.\par
  Indeed, the latter cycle class map factors through the $s$-map $L^{p+3}H^{p+6}(X)\to H^{p+6}_{B}(X,\Z)$ constructed in \cite{FM94}. Recall that $X$ is given by the product $S\times Y$, where $S$ is the surface of Theorem \ref{thm:injectivity-result-intro} and where $Y:=E_{1}\times\cdots\times E_{p-1}\times JC$ is the product of $p-1$ very general elliptic curves $E_{i}$ with the Jacobian $JC$ of a very general genus $3$ curve $C$. By our construction the infinitely many $n$-torsion cycles in $L^{p+3}H^{p+6}(X)$ are given as exterior products $[\Gamma\times z]$, where $[\Gamma]\in L^{2}H^{3}(S)/n$ and $[z]\in\ker(L^{p+1}H^{p+3}(Y)/n\to H^{p+3}(Y_{\et},\Z/n(p+1)))$; see proofs of Theorems \ref{thm:main-Lawson} and \ref{thm:general-totaro-result}. Thus, same reasoning as in Remark \ref{rem:trivial-abel-jacobi-invariant}, explains that the $n$-torsion classes considered in $L^{p+3}H^{p+6}(X)$ all lie in the kernel of the $s$-map.\end{remark}

\section*{Acknowledgments} We are grateful to Stefan Schreieder for comments and discussions. The second named author was
supported by the European Union’s Horizon 2020 research and innovation programme under grant agreement No. 948066 (ERC-StG RationAlgic).


\end{document}